\documentclass[11pt]{amsart}

\usepackage{graphicx}
\usepackage[utf8]{inputenc}
\usepackage[english]{babel}
\usepackage{amsmath} 
\usepackage{amssymb}
\usepackage{amsthm}
\usepackage{mathtools}
\usepackage{graphicx}
\graphicspath{ {c:/This PC/Documents/images/} }
\usepackage{tikz-cd}
\usepackage{MnSymbol}
\usepackage{amsthm}
\usepackage{tikz-cd}
\usepackage{extarrows}
\usepackage[top=30mm, left=35mm, right=35mm]{geometry}
\usepackage{csquotes}

\usepackage[backend=biber, style=alphabetic, sorting=nyt]{biblatex}
\addbibresource{literatur.bib}
\AtBeginBibliography{\footnotesize}

\theoremstyle{plain}
\newtheorem{theorem}{Theorem}[section]
\newtheorem*{theorem*}{Theorem}

\newtheorem{lemma}[theorem]{Lemma}

\newtheorem{corollary}[theorem]{Corollary}
\newtheorem*{corollary*}{Corollary}

\theoremstyle{definition}
\newtheorem{definition}[theorem]{Definition}

\newtheorem*{remark}{Remark}

\newtheorem{example}[theorem]{Example}

\newenvironment{customthm}[1]
  {\innercustomthm}
  {\endinnercustomthm}

\newcommand{\Mod}{\text{Mod}}

\setlength\parindent{0pt}

\title{Comparing Teichm\"uller and curve graph translation lengths}
\author{Philipp Bader}
\date{}

\begin{document}

\maketitle

\begin{abstract}
    A pseudo-Anosov mapping class acts on Teichm\"uller space $\mathcal{T}$ as well as on the curve graph $\mathcal{C}$ with so called north-south dynamics. We can measure a stable translation length $l_\mathcal{T}$ and $l_\mathcal{C}$ of the respective actions. Boissy--Lanneau compute the minimal Teichm\"uller translation length over all pseudo Anosovs in a fixed genus that lie in a hyperelliptic component of translation surfaces. In particular, this minimum is always greater than $\log(\sqrt{2}),$ independently of the genus. Here, we show that the minimal stable curve graph translation length over the same family of pseudo-Anosovs behaves differently: Namely, for a genus $g$ surface this minimal translation length is of order $\frac{1}{g}.$ To prove this result, we combine techniques that are used to find upper and lower bounds for the stable curve graph translation length with the Rauzy-Veech induction machinery.\\
    \indent We proceed with showing that for fixed genus $g$ there is a sequence of pseudo-Anosovs $f_n$ with 
    $$\lim\limits_{n \to \infty} l_\mathcal{T}(f_n) = \infty \text{ and } l_\mathcal{C}(f_n) \le \frac{1}{g-1}$$ for all $n \in \mathbb{N}.$ As a corollary, we obtain that there are stable curve graph translation lengths with infinite multiplicity, i.e. there exists $q \in \mathbb{Q}$ and infinitely many, non-conjugate pseudo-Anosovs $f_n$ with $l_\mathcal{C}(f_n) = q$ for all $n.$\\
    \indent Finally, for a pseudo-Anosov $f$ -satisfying some technical conditions- we present a method to construct infinitely many, non-conjugate pseudo-Anosovs with the same stable curve graph translation length as $f.$ 
\end{abstract}

\tableofcontents

\section{Introduction}

Let $S$ be a closed, oriented surface of genus $g \ge 2.$ The \textit{mapping class group} $\Mod(S)$ is the group of orientation preserving homeomorphisms up to isotopy. $\Mod(S)$ admits natural actions on the \textit{Teichm\"uller space} $\mathcal{T}(S)$ and on the \textit{curve graph} $\mathcal{C}(S).$ The former is the space of marked hyperbolic structures on $S,$ while the latter is a graph whose vertices correspond to isotopy classes of essential, simple, closed curves and its edges to disjointness of these curves. $\mathcal{T}(S)$ admits a metric called the \textit{Teichm\"uller metric} which resembles the hyperbolic metric in many properties. For example, the isometries of $\mathcal{T}(S)$ can be split into three categories, just like isometries of hyperbolic space are classified as being elliptic, parabolic or hyperbolic. The action of the mapping class group on Teichm\"uller space was used by Thurston to categorize mapping classes. The Nielsen-Thurston classification states that any $f \in \Mod(S)$ is either periodic, reducible or pseudo-Anosov.\\

Pseudo-Anosovs act on $\mathcal{T}(S)$ in a similar way to a hyperbolic isometry: for any $f$ pseudo-Anosov there exists a bi-infinite geodesic in $\mathcal{T}(S)$ such that $f$ fixes this geodesic setwise and acts on it by translation. Denote the length of this translation by $l_{\mathcal{T}}(f).$ Using the definition of the Teichm\"uller metric one can see that $l_{\mathcal{T}}(f) = \log \lambda(f)$ where $\lambda(f)$ is the stretch factor of the pseudo-Anosov. Hence, measuring the translation length on Teichm\"uller space is equivalent to determining the stretch factor of $f.$ A big open question concerning the study of mapping classes is which numbers arise as a stretch factor of a pseudo-Anosov. It is known that for fixed genus $g,$ the set of all $\lambda$ which are stretch factors for some pseudo-Anosov of the genus $g$ surface is a closed, discrete subset of $\mathbb{R}$. In particular, this set has a minimum and it is an open problem to find the precise value of this minimum or the pseudo-Anosov attaining it. It is known however, that the minimum is of order $\frac{1}{g}.$ All of the above can be found in more detail in (\cite{primer}, Chapter 14).\\

The curve graph $\mathcal{C}(S)$ is a Gromov-hyperbolic graph and the action of pseudo-Anosovs on the curve graph is by hyperbolic isometries (\cite{MM}, Proposition 4.6). Here, a hyperbolic isometry is one for which the limit set of any orbit consists of exactly two points in the Gromov-boundary (as defined in \cite{G}, page 209). In fact, Bowditch showed that similarly to the picture in Teichm\"uller space, for a pseudo-Anosov $f$ there exists a bi-infinite geodesic in $\mathcal{C}(S)$ such that some power $f^k$ preserves this geodesic and acts on it by translation (\cite{Bowditch}). As before, we can measure the length of this translation, call it $l_{\mathcal{C}}(f^k)$ and set $l_C(f) := \frac{l_{\mathcal{C}}(f^k)}{k}$ to be the stable translation length of $f$ on the curve graph. In Section \ref{prelim} below, we give an equivalent definition of $l_{\mathcal{C}}(f)$ which justifies the name \textit{stable} curve graph translation length. From Bowditch's work, it follows that the $l_\mathcal{C}(f)$ are always rational.  As for stretch factors, the set of all numbers arising as stable curve graph translation lengths of pseudo-Anosovs of a fixed genus $g$ surface has a minimum. Again, it is an open problem to find the specific value of this minimum and the pseudo-Anosov attaining it. It is known that the minimum is of order $\frac{1}{g^2}$ (\cite{Vaibhav}).\\

Regarding the problem of finding the exact minima, one can relax the question by restricting to a subset of all pseudo-Anosovs and try to find the minimal stretch factor or stable curve graph translation length over this subset. Boissy--Lanneau restrict themselves to the set of pseudo-Anosovs that are affine with respect to a translation surface in a hyperelliptic stratum component. They proceed to determine the minimal stretch factor over this set and explicitly construct a pseudo-Anosov that attains this minimum (\cite{BL}). Let $f_g$ be this pseudo-Anosov for the respective genus $g$ surface. In particular, Boissy--Lanneau show that $\lambda(f_g) \ge \sqrt{2}$ for any $g.$\\

The first goal of this paper is to compute the stable curve graph translation length of the $f_g.$ We show:

\begin{theorem}\label{mytheorem}
    For all $g \ge 2,$ it holds that
    $$\frac{1}{16g-12} \, \le \, l_C(f_g) \, \le \, \frac{1}{g-1}.$$
\end{theorem}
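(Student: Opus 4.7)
The argument splits naturally into the upper and lower bounds, with both relying on the Rauzy--Veech presentation of $f_g$ that underlies the Boissy--Lanneau construction.

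\textbf{Upper bound.} I would exploit the standard fact that $l_C(f) = \lim_{n \to \infty} d_\mathcal{C}(\alpha, f^n(\alpha))/n$ for any vertex $\alpha$, which reduces matters to finding a single simple closed curve $\alpha$ with $d_\mathcal{C}(\alpha, f_g^{g-1}(\alpha)) \le 1$. The hyperelliptic translation surface carrying $f_g$ admits a canonical horizontal cylinder decomposition, and a core curve of one of these cylinders is the natural candidate for $\alpha$. Using the explicit action of $f_g$ coming from the Rauzy--Veech loop, I would verify that after $g-1$ iterations the image curve is disjoint from $\alpha$, with the count $g-1$ forced by the hyperelliptic involution. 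Subadditivity then yields $d_\mathcal{C}(\alpha, f_g^{k(g-1)}(\alpha)) \le k$, and passing to the limit gives $l_C(f_g) \le 1/(g-1)$.

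\textbf{Lower bound.} This is the technical core of the theorem, and the place where the Rauzy--Veech machinery is essential. By Bowditch's theorem $l_C(f_g)$ is rational, and a quantitative form of that result bounds its denominator by a combinatorial invariant of $f_g$. The plan is to use the Rauzy--Veech loop of $f_g$ to produce an integer $k \le 16g-12$ such that $f_g^k$ preserves a geodesic axis in $\mathcal{C}$ on which it translates by a positive integer number of edges; this immediately gives $l_C(f_g) = l_\mathcal{C}(f_g^k)/k \ge 1/k \ge 1/(16g-12)$. To make this quantitative I would use the correspondence between Rauzy--Veech moves and curve graph displacements: each elementary move shifts a distinguished short curve by at most a bounded amount in $\mathcal{C}$, and the number of elementary moves composing one period of $f_g$ in the hyperelliptic Rauzy diagram is precisely what controls the denominator. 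Counting these moves carefully should yield the constant $16g-12$.

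\textbf{Main obstacle.} The hardest step will be pinning down the sharp constant $16g-12$ in the lower bound. This requires an explicit analysis of the hyperelliptic Rauzy diagram for every genus $g$, counting the length of the specific loop realized by $f_g$ and verifying that no secondary cancellations (from returning substrings in the loop, or from subsurface projections) can further reduce the denominator of $l_C(f_g)$. The payoff of using the detailed hyperelliptic structure, rather than general Gadre--Tsai-type estimates, is precisely the linear in $g$ behavior of both bounds, so a large part of the work is checking that the combinatorial constants remain uniform across the family $\{f_g\}_{g \ge 2}$.
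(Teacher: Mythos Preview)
Your upper-bound plan is in the same spirit as the paper's, but the details you assert are not the ones that actually hold. The paper does not find a curve that becomes \emph{disjoint} from itself after $g-1$ iterates; instead it takes a side $\alpha_{2g-1}$ of the defining $4g$-gon (these sides are closed curves on $X_g$), tracks its orbit under $f_g^{-1}$ using the explicit relabeling permutation, and after $2g-2$ iterates lands on the side $\alpha_g$. These two sides meet only at the single cone point, so $d_\mathcal{C}(\alpha_{2g-1},f_g^{-(2g-2)}(\alpha_{2g-1}))\le 2$, giving $l_\mathcal{C}(f_g)\le 2/(2g-2)=1/(g-1)$. Your claimed pair $(g-1,\,\text{distance }1)$ is not justified; the hyperelliptic involution does not by itself produce a disjoint return after $g-1$ steps, and you would need an explicit curve together with a concrete computation of its orbit to make the argument go through.

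The lower-bound plan has a genuine gap. The paper does \emph{not} produce a power $k$ for which $f_g^k$ has an invariant geodesic axis, nor does it appeal to Bowditch's rationality theorem. Instead it uses the invariant train track $\tau_\gamma$ coming from the Rauzy--Veech presentation, computes the train-track matrix $V_{\gamma_g}$, and shows (via Tsai's criterion, using that $V_{\gamma_g}$ has a positive diagonal entry) that $V_{\gamma_g}^{4g}$ is positive. Combining this with Gadre's diagonal-extension estimate and the Masur--Minsky Nesting Lemma gives $K=6(2g-2)+4g=16g-12$ as the power after which iterates of any carried curve fall into $\mathrm{int}(PE(\tau))$, and the nesting chain then yields $l_\mathcal{C}(f_g)\ge 1/K$. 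Your proposed mechanism---bounding the denominator of $l_\mathcal{C}(f_g)$ by the number of elementary Rauzy--Veech moves in the loop---cannot give $16g-12$: the loop $\gamma_g=ftb^g$ has only $g+2$ edges, and there is no general result translating ``one Rauzy--Veech move'' into ``bounded curve-graph displacement'' in the way you describe. The constant $16g-12$ arises from the \emph{positivity exponent} of the train-track matrix together with a surface-complexity term, not from the length of the Rauzy path, so the counting you outline does not produce the bound and the step ``counting these moves carefully should yield the constant $16g-12$'' would fail.
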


We proceed with showing that the minimal stable curve graph translation length over all pseudo-Anosovs in a hyperelliptic component behaves like $l_\mathcal{C}(f_g).$ In the following, we write $f(n) \asymp h(n)$ for two functions $f, h: \mathbb{N}_{\ge 2} \to \mathbb{R}$ if $\frac{f(n)}{h(n)} \in [\frac{1}{C}, C]$ for some $C > 1$ and all $n.$

\begin{theorem}\label{bettertheorem}
    Let $l_g$ be the minimal stable curve graph translation length over all pseudo-Anosov that are affine with respect to a genus $g$ translation surface in a hyperelliptic component. Then:
    $$l_g \asymp \frac{1}{g}.$$
\end{theorem}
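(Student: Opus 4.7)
The plan is to establish matching bounds of order $1/g$.

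For the upper bound, the pseudo-Anosov $f_g$ from Boissy--Lanneau is by construction affine with respect to a hyperelliptic-component translation surface of genus $g$, so by the very definition of $l_g$ together with Theorem \ref{mytheorem},
$$l_g \le l_\mathcal{C}(f_g) \le \frac{1}{g-1},$$
which already gives $l_g \le 2/g$ for $g \ge 2$.

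For the lower bound $l_g \ge c/g$, the strategy is to extend the lower-bound portion of the proof of Theorem \ref{mytheorem} from the specific $f_g$ to every pseudo-Anosov in a hyperelliptic component. The key universal input is the Boissy--Lanneau inequality $\lambda(f) \ge \sqrt{2}$, i.e. $l_\mathcal{T}(f) \ge \log \sqrt{2}$, valid for \emph{any} pseudo-Anosov $f$ affine with respect to a hyperelliptic-component translation surface.

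I would proceed in three steps. First, represent an arbitrary such $f$ as a loop in the associated Rauzy--Veech diagram, so that $\lambda(f)$ appears as the Perron--Frobenius eigenvalue of the resulting product of Rauzy--Veech matrices. Second, using the explicit combinatorics of the hyperelliptic Rauzy class together with the hyperelliptic involution $\iota$, produce a family of curves whose pairwise curve graph distances are bounded linearly in $g$ and whose $f$-translates make a definite amount of subsurface projection progress per Rauzy--Veech step. Third, combine both to conclude that the curve graph displacement per unit of Teichm\"uller translation scales at most like $g$, so that the Boissy--Lanneau bound yields
$$l_\mathcal{C}(f) \ge \frac{c \log \sqrt{2}}{g}$$
for a universal constant $c > 0$, uniformly over the hyperelliptic component.

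The hard part is to make the second step genuinely uniform. The lower bound $1/(16g-12)$ for $f_g$ in Theorem \ref{mytheorem} plausibly exploits the very short and explicit Rauzy--Veech loop representing $f_g$, whereas a general pseudo-Anosov in a hyperelliptic component can correspond to an arbitrarily long loop. The main obstacle is therefore to obtain a bound on curve graph displacement per Rauzy--Veech step that is independent of the loop length. A natural route is to use $\iota$ to pair the test curves into $\iota$-invariant families so that their combined displacement is controlled only by the number of complexity parameters of the stratum, which scales linearly in $g$, rather than by the combinatorial length of the loop representing $f$.
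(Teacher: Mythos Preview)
Your upper bound is fine and matches the paper exactly.

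The lower bound strategy, however, has a genuine gap. You propose to derive $l_\mathcal{C}(f) \ge c\log\sqrt{2}/g$ from the Boissy--Lanneau stretch-factor bound $l_\mathcal{T}(f) \ge \log\sqrt{2}$, via some uniform comparison between curve graph and Teichm\"uller displacement. But no such comparison exists, even within a hyperelliptic component: Theorem~\ref{myothertheorem} of this very paper constructs (in fixed genus) pseudo-Anosovs with $l_\mathcal{T}(f_n)\to\infty$ while $l_\mathcal{C}(f_n)$ stays bounded. So an inequality of the form $l_\mathcal{T}(f)\le Cg\cdot l_\mathcal{C}(f)$ cannot hold uniformly, and the stretch-factor bound is simply the wrong input. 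Your Step~2, about ``subsurface projection progress per Rauzy--Veech step'' controlled by $\iota$-invariant curve families, is not an argument; it is a hope, and one that would have to overcome the obstruction you yourself identify (arbitrarily long loops) without any mechanism for doing so.

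The paper's proof does not touch the stretch factor at all. Instead it uses a \emph{structural} result of Boissy--Lanneau (Theorem~4.1 in \cite{BL}): for any pseudo-Anosov $f$ in a hyperelliptic component, at least one of $\phi^+,\phi^-$ (which have equal $l_\mathcal{C}$ since the hyperelliptic involution commutes with $f$ and fixes a curve) is represented either by a closed loop in the labeled Rauzy diagram $\mathcal{D}_{(\pi_t,\pi_b)}$, or by a path starting on the central loop whose only flip edge is the last one. In both cases one checks directly that the associated train track matrix $V_\gamma$ has a positive diagonal entry (trivially in the first case; by identifying the second-to-last vertex explicitly in the second). Tsai's lemma then gives that $V_\gamma^{4g+2}$ is positive, and the Nesting Lemma argument from the proof of Theorem~\ref{mytheorem} yields $l_\mathcal{C}(f)\ge 1/(16g-10)$. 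The point is that the positive diagonal entry is what makes the bound independent of the loop length; this is the uniformity you were looking for, and it comes from the combinatorics of the permutation matrix $P$, not from any dynamical estimate.
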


We remark that while the stable curve graph translation length of $f_g$ is of the same order as $l_g,$ it remains unclear whether $f_g$ is the actual minimiser of the stable curve graph translation length over all pseudo-Anosovs in a hyperelliptic component, i.e. whether $l_g = l_\mathcal{C}(f_g).$\\ 

Boissy--Lanneau's result can be interpreted as saying that pseudo-Anosovs in hyperelliptic components have large stretch factor. Theorem \ref{bettertheorem} says that also the stable curve graph translation lengths of such pseudo-Anosovs is large. The minimal stable curve graph translation length over all pseudo-Anosovs is of order $\frac{1}{g^2}$ and can hence certainly not be attained by a pseudo-Anosov in a hyperelliptic component. However, Theorem \ref{bettertheorem} also shows that after restricting to the pseudo-Anosovs in hyperelliptic components, the minimal stable curve graph translation length behaves differently to the minimal stretch factor. While the minimal stretch factor stays bounded from below by the constant $\sqrt{2},$ the minimal stable curve graph translation length behaves like $\frac{1}{g}$ and in particular tends to $0$ as we increase the genus.\\

This raises the question of how the two translation lengths $l_{\mathcal{T}}(f)$ and $l_{\mathcal{C}}(f)$ are related in general. Using the systole map from $\mathcal{T}$ to $\mathcal{C}$ which is coarsely Lipschitz, one can show that $l_{\mathcal{C}}(f) \le K \cdot l_{\mathcal{T}}(f)$ for any $f,$ where the constant $K$ depends only on the genus of $S$ and can be chosen to be approximately $\frac{1}{\log(g)}$ (see \cite{Vaibhav2} for details). In particular, this shows that small Teichm\"uller space translation length (or stretch factor) implies small stable curve graph translation length, but raises the question about the opposite. The $f_g$ from Theorem \ref{mytheorem} are an example of a sequence of pseudo-Anosovs such that $l_{\mathcal{T}}(f_g)$ is uniformly bounded away from $0,$ but $l_C(f_g) \longrightarrow 0$ as  $g \to \infty.$ One can ask if an even better statement is possible, i.e. if there exists a sequence $(h_g)$ with $h_g$ a pseudo-Anosov of a genus $g$ surface such that $l_{\mathcal{T}}(h_g) \longrightarrow \infty \text{ and } l_\mathcal{C}(h_g) \longrightarrow 0,$ as $g \to \infty.$ Another question would be what happens when we fix the genus, i.e. can we find a sequence of pseudo-Anosovs $(f_n) \subset \Mod(S)$ with $l_\mathcal{T}(f_n) \longrightarrow \infty$ as $n \to \infty$ while $l_\mathcal{C}(f_n) \le C$ for some constant $C > 0$ and all $n.$ Note that after fixing the genus, this is the best we can ask for since there is a minimal stable curve graph translation length, so we can't have $l_\mathcal{C}(f_n)$ approaching $0.$\\ 

We provide affirmative answers to the above questions by showing:

\begin{theorem}\label{myothertheorem}
    For any $g \ge 3,$ there exists a sequence $(f_n) \subset \Mod(S)$ of pseudo-Anosovs with 
    $$ \lim\limits_{n \to \infty} l_\mathcal{T}(f_n) = \infty \text{ and } l_\mathcal{C}(f_n) \le \frac{1}{g-1} \text{ for all } n \in \mathbb{N}.$$
\end{theorem}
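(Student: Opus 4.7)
The plan is to bootstrap from Theorem \ref{mytheorem}: for each $g \ge 3$ we already have a pseudo-Anosov $f_g$ with $l_\mathcal{C}(f_g) \le \tfrac{1}{g-1}$, so it suffices to produce infinitely many deformations of $f_g$ whose curve graph translation length stays under this bound while their Teichm\"uller translation length escapes to infinity. Morally such deformations should exist because the curve graph axis is a much coarser object than the Teichm\"uller axis, leaving room to modify $f_g$ in ways that are visible to $\mathcal{T}(S)$ but essentially invisible to $\mathcal{C}(S)$.

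The concrete template I would pursue is to set $f_n := f_g \circ \psi^n$, where $\psi$ is a mapping class supported on a proper subsurface $Y \subset S$ chosen to interact as little as possible with the curve graph axis of $f_g$; the hypothesis $g \ge 3$ enters in ensuring that $S$ admits such a $Y$ supporting a non-trivial $\psi$, for instance a pseudo-Anosov on $Y$. On the Teichm\"uller side, the transition matrix of $f_n$, read off from an invariant train track refined by $\psi^n$, acquires a contribution whose Perron--Frobenius eigenvalue blows up with $n$, giving $l_\mathcal{T}(f_n) = \log \lambda(f_n) \to \infty$. A possibly cleaner alternative would be to apply the general construction announced in the last paragraph of the abstract (infinitely many non-conjugate pseudo-Anosovs with the same stable curve graph translation length as a given $f$) directly to $f = f_g$, after verifying that $f_g$ meets its technical hypotheses, and then to check separately that the resulting family has unbounded stretch factors.

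The delicate part, and the main obstacle, is the bound $l_\mathcal{C}(f_n) \le \tfrac{1}{g-1}$ uniformly in $n$. The guiding idea is that $\psi^n$ fixes every isotopy class of curve disjoint from $Y$, so on such curves $f_n$ acts exactly as $f_g$ and inherits its axis structure up to controlled errors; making this precise amounts to producing, for each $n$, an unparametrised quasi-axis of $f_n$ in $\mathcal{C}(S)$ along which $f_n$ translates by at most $l_\mathcal{C}(f_g)$. I expect to establish this using subsurface-projection and hierarchical estimates in the style of Masur--Minsky, together with the explicit Rauzy--Veech picture of $f_g$ developed earlier in the paper, while carefully ruling out hidden extra translation arising from curves that intersect $Y$. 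Non-conjugacy of the $f_n$ (implicit in the claim that they form a genuine sequence) will then follow for free from the fact that their stretch factors are pairwise distinct.
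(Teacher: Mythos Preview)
Your plan has a genuine gap, and the paper's route is quite different from what you propose.

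First, your ``cleaner alternative'' cannot work as stated: the construction in Theorem~\ref{construction of inf mult} requires $l_\mathcal{C}(f) = m \in \mathbb{N}$ together with a curve on which this integer translation length is realised exactly. The map $f_g$ from Theorem~\ref{mytheorem} has $l_\mathcal{C}(f_g) \le \frac{1}{g-1} < 1$, so it never satisfies that hypothesis, and you give no argument that $f_g$ has an integer-length invariant axis after passing to a power in a way that would still yield the bound $\frac{1}{g-1}$.

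Second, your primary template $f_n = f_g \circ \psi^n$ is only a heuristic. For the curve graph bound you need a curve $\beta$ with $f_n^{k}(\beta) = f_g^{k}(\beta)$ for $k = 1,\dots,g-1$ (or $2g-2$), which forces the entire $f_g$-orbit segment of $\beta$ to miss the support $Y$ of $\psi$. But the orbit used in the upper bound of Theorem~\ref{mytheorem} runs through every polygon side $\alpha_i$ with $i \ne 2g$, and the complement of those curves in $X_g$ is an open annulus with core $\alpha_{2g}$. So the only possible $\psi$ is a power of the Dehn twist $T_{\alpha_{2g}}$, and you would then still owe a proof that $f_g \circ T_{\alpha_{2g}}^n$ is pseudo-Anosov and that its stretch factor tends to infinity. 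None of this is addressed, and the Masur--Minsky machinery you invoke does not supply it.

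The paper does not bootstrap from $f_g$ at all. It uses an independent Penner-type construction: on the rotationally symmetric genus $g$ surface with rotation $\rho$ of order $g$ and curves $a,b,c$, set
\[
f_n \;=\; \rho \circ T_c \circ T_a^{-1} \circ T_b^{\,n}.
\]
That $f_n$ is pseudo-Anosov follows from Penner's criterion applied to $f_n^g$. The stretch factor bound $\lambda(f_n) \ge n^{1/g}$ comes from an explicit $3g \times 3g$ train track matrix whose smallest row sum is $n+1$. The curve graph bound is a one-line observation: the curve $\rho(b)$ is disjoint from $a,b,c$, hence $f_n(\rho(b)) = \rho^2(b)$ regardless of $n$, and iterating gives $f_n^{\,g-1}(\rho(b)) = b$, which is disjoint from $\rho(b)$; thus $l_\mathcal{C}(f_n) \le \frac{1}{g-1}$. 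The point is that the extra twisting $T_b^{\,n-1}$ is supported on a curve already disjoint from the orbit witness, so no subsurface-projection estimates are needed.
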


We remark that in Theorem \ref{myothertheorem} the constant bounding the curve graph translation length can even be chosen to be of order $\frac{1}{g^2}.$ This will be discussed in the proof of the Theorem. However, it is enough to state the Theorem as above in order to obtain the following:

\begin{corollary}\label{cor1}
    There exists a sequence $(h_g)_{g=2}^\infty$, where $h_g$ is a pseudo-Anosov of a genus $g$ surface with
    $$l_{\mathcal{T}}(h_g) \to \infty \text{ and } l_\mathcal{C}(h_g) \to 0,$$ as $g \to \infty.$
\end{corollary}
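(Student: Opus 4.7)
The plan is to deduce Corollary \ref{cor1} from Theorem \ref{myothertheorem} by a straightforward diagonal argument, so the work amounts to bookkeeping rather than new geometric input. First I would fix, for each $g \ge 3$, the sequence of pseudo-Anosovs $(f_n^{(g)})_{n \in \mathbb{N}} \subset \Mod(S_g)$ produced by Theorem \ref{myothertheorem}, which satisfies $l_\mathcal{T}(f_n^{(g)}) \to \infty$ as $n \to \infty$ and $l_\mathcal{C}(f_n^{(g)}) \le \frac{1}{g-1}$ for every $n$.

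Next, for each $g \ge 3$, since $l_\mathcal{T}(f_n^{(g)})$ diverges in $n$, I can choose an index $n(g) \in \mathbb{N}$ with the property that $l_\mathcal{T}(f_{n(g)}^{(g)}) \ge g$. Define $h_g := f_{n(g)}^{(g)}$ for $g \ge 3$ and set $h_2$ to be any pseudo-Anosov on $S_2$ (e.g.\ one of the minimisers recalled earlier in the introduction) so that the sequence is defined starting at genus $2$.

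It then remains to check the two limits. By construction, $l_\mathcal{T}(h_g) \ge g$ for $g \ge 3$, so $l_\mathcal{T}(h_g) \to \infty$ as $g \to \infty$. On the curve graph side, Theorem \ref{myothertheorem} gives
\[
l_\mathcal{C}(h_g) \;=\; l_\mathcal{C}(f_{n(g)}^{(g)}) \;\le\; \frac{1}{g-1} \xrightarrow[g \to \infty]{} 0,
\]
which finishes the argument.

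There is no genuine obstacle here: the corollary is a soft consequence of Theorem \ref{myothertheorem} once one realises that the upper bound $\frac{1}{g-1}$ on $l_\mathcal{C}$ is uniform in $n$ while the divergence of $l_\mathcal{T}$ is available at each fixed $g$. All the nontrivial work is already encoded in the existence of the sequences $(f_n^{(g)})$; the diagonal choice $n(g)$ is what converts a family of one-parameter statements into a single statement along a sequence of genera.
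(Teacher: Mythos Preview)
Your argument is correct and follows essentially the same diagonal strategy as the paper: for each $g\ge 3$ one selects a single element from the sequence provided by Theorem~\ref{myothertheorem} with large enough Teichm\"uller translation length, and lets the uniform bound $\frac{1}{g-1}$ handle the curve graph side. The only cosmetic difference is that the paper makes the explicit choice $n(g)=g^g$ (using the quantitative estimate $l_\mathcal{T}(f_n)\ge \log(n^{1/g})$ from Lemma~\ref{sf}), whereas you invoke divergence abstractly to pick $n(g)$.
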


Theorem \ref{myothertheorem} gives another interesting consequence about stable curve graph translation lengths, namely that there are such lengths with infinite multiplicity:

\begin{corollary}\label{cor2}
    For any $g \ge 3,$ there exists $q \in \mathbb{Q}$ such that there are infinitely many non-conjugate pseudo-Anosovs in $\Mod(S)$ with stable curve graph translation length $q.$
\end{corollary}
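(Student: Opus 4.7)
The plan is to derive Corollary \ref{cor2} as a pigeonhole consequence of Theorem \ref{myothertheorem}, combined with the known discreteness of stable curve graph translation lengths due to Bowditch. I would start with the sequence $(f_n)$ provided by Theorem \ref{myothertheorem}, so that $l_\mathcal{T}(f_n) \to \infty$ and $l_\mathcal{C}(f_n) \le 1/(g-1)$ for every $n$.

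First I would observe that the stretch factor $\lambda(f) = \exp l_\mathcal{T}(f)$ is a conjugacy invariant of $f$. The divergence $l_\mathcal{T}(f_n) \to \infty$ therefore allows me to pass to a subsequence along which the values $l_\mathcal{T}(f_n)$ are strictly increasing; in particular no two members of this subsequence are conjugate in $\Mod(S)$. Next I would invoke Bowditch's result that there exists an integer $N = N(S)$ with $N \cdot l_\mathcal{C}(f) \in \mathbb{Z}$ for every pseudo-Anosov $f \in \Mod(S)$, so that the set of possible values $l_\mathcal{C}(f) \in (0, 1/(g-1)]$ is finite. A pigeonhole applied to the sequence of values $\{l_\mathcal{C}(f_n)\}$ then produces some $q \in \mathbb{Q}$ attained by infinitely many of the $f_n$, and these give the desired infinite family of pairwise non-conjugate pseudo-Anosovs with common stable curve graph translation length $q$.

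The main point to verify is really just the correct form of Bowditch's rationality/discreteness statement; everything else is bookkeeping that combines Theorem \ref{myothertheorem} with conjugacy-invariance of the stretch factor. If only rationality were available a priori, one could supplement it by separately extracting discreteness of the stable curve graph translation length spectrum from Bowditch's argument, which together with the uniform upper bound $1/(g-1)$ still forces a finite value set and leaves the pigeonhole step intact.
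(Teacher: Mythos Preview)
Your proposal is correct and follows essentially the same argument as the paper: both use the sequence from Theorem~\ref{myothertheorem}, pass to a subsequence with pairwise distinct stretch factors to ensure non-conjugacy, invoke Bowditch's result that $l_\mathcal{C}$ takes values in $\frac{1}{m}\mathbb{Z}$ for some $m=m(S)$ to deduce that only finitely many values lie below $\frac{1}{g-1}$, and then apply pigeonhole.
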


This is different from Teichm\"uller space translation length, where it is known that some $x \in \mathbb{R}$ is attained as $l_\mathcal{T}(f)$ by at most finitely many non-conjugate pseudo-Anosovs $f.$ We remark that the result of Corollary \ref{cor2} is folklore among experts in the field. For example, the pseudo-Anosovs constructed by Watanabe can also be used to obtain the Corollary (\cite{Wa}).\\

The above shows the existence of a number $q \in \mathbb{Q}$ with infinite multiplicity with respect to stable curve graph translation lengths. However, given a pseudo-Anosov $f,$ it is not clear whether $l_\mathcal{C}(f)$ is attained by an infinite number of non-conjugate pseudo-Anosovs. We end this work by presenting a method that for certain pseudo-Anosovs $f$ constructs infinitely many non-conjugate pseudo-Anosovs $f_i$ with $l_\mathcal{C}(f_i) = l_\mathcal{C}(f).$ In particular, we prove:

\begin{theorem}\label{construction of inf mult}
    Let $f$ be a pseudo-Anosov with $l_\mathcal{C}(f) = m,$ where $m \in \mathbb{N},$ and assume that there is a non-separating curve $\alpha$ attaining the translation length, i.e. 
    $$d_\mathcal{C}(\alpha, f^{k}(\alpha)) = km$$ for all $k \in \mathbb{Z}.$
    Then, for any pseudo-Anosov $h$ of the surface $S \setminus \alpha,$ there exists $N_h \in \mathbb{N}$ such that for $n \ge N_h$ the $f_{ h,n} := h^n \circ f$ satisfy $l_\mathcal{C}(f_{h,n}) = l_\mathcal{C}(f).$
\end{theorem}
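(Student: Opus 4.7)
The strategy is to sandwich $l_\mathcal{C}(f_{h,n})$ between $m$ and $m$ for $n$ large, and then verify that the family $\{f_{h,n}\}$ contains infinitely many non-conjugate pseudo-Anosovs.

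For the upper bound, since $h$ is supported on $S \setminus \alpha$ one has $h^n(\alpha) = \alpha$, hence $d_\mathcal{C}(\alpha, h^n \beta) = d_\mathcal{C}(\alpha, \beta)$ for every simple closed curve $\beta$. In particular $d_\mathcal{C}(\alpha, f_{h,n}(\alpha)) = d_\mathcal{C}(\alpha, f(\alpha)) = m$, and an induction using this base case together with $f_{h,n}$-equivariance and the triangle inequality gives $d_\mathcal{C}(\alpha, f_{h,n}^k(\alpha)) \le km$ for every $k$, hence $l_\mathcal{C}(f_{h,n}) \le m$. This bound holds for every $n$ and does not require $n \ge N_h$.

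The lower bound $l_\mathcal{C}(f_{h,n}) \ge m$ is where the large-$n$ hypothesis enters. The key identity is
\begin{equation*}
    f_{h,n}^k = h_0\, h_1 \cdots h_{k-1}\, f^k, \qquad h_i := f^i h^n f^{-i},
\end{equation*}
which decomposes $f_{h,n}^k(\alpha)$ as $(h_0 \cdots h_{k-1})(f^k(\alpha))$, where each $h_i$ is a pseudo-Anosov on the subsurface $Y_i := S \setminus f^i(\alpha)$ fixing $f^i(\alpha)$. I aim to show $d_\mathcal{C}(\alpha, f_{h,n}^k(\alpha)) \ge km - C$ for a constant $C = C(f,h)$ independent of $k$; the main tool is the Masur-Minsky bounded geodesic image theorem. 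If a geodesic $\gamma$ from $\alpha$ to $f_{h,n}^k(\alpha)$ were substantially shorter than $km$, then since the $f$-axis $\{f^i(\alpha)\}$ already has exact spacing $m$, $\gamma$ would have to deviate around the axis, producing a vertex disjoint from some $f^i(\alpha)$. But because $h_i$ acts as a pseudo-Anosov on $\mathcal{C}(Y_i)$ with translation length proportional to $n$, the subsurface projection $d_{Y_i}(\alpha, f_{h,n}^k(\alpha))$ grows with $n$ (once $n$ exceeds a threshold $N_h$ guaranteeing that the chained projections through $h_0,\ldots,h_{k-1}$ do not cancel), contradicting BGI.

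The pseudo-Anosov property of $f_{h,n}$ for $n \ge N_h$ follows from standard Nielsen-Thurston / Thurston-construction arguments (ruling out invariant multicurves, using that $f$ is pseudo-Anosov and $h^n$ is a high power of a pseudo-Anosov on $Y$), and non-conjugacy of the $f_{h,n}$ for distinct large $n$ follows from $l_\mathcal{T}(f_{h,n}) \to \infty$ as $n \to \infty$. The main obstacle is calibrating the lower-bound argument so that the additive error $C$ is uniform in $k$: one must chain the BGI estimate through all $k$ conjugates $h_0, \ldots, h_{k-1}$ simultaneously, and here the integrality hypothesis $m \in \mathbb{N}$ together with the fact that $\alpha$ lies exactly on the $f$-axis is essential, since it ensures the $f$-orbit of $\alpha$ is an honest geodesic line rather than merely a quasi-geodesic with additive slack that would compound with $k$.
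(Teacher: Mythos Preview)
Your upper bound is correct and matches the paper's. The decomposition $f_{h,n}^k = h_0 h_1 \cdots h_{k-1} f^k$ (up to a harmless shift in indexing) is also right, and the idea that the lower bound should come from the bounded geodesic image theorem is the correct one.

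The genuine gap is in the lower bound, precisely at the point you flag as ``the main obstacle.'' You assert that $d_{Y_i}(\alpha, f_{h,n}^k(\alpha))$ grows with $n$ for a threshold $N_h$ independent of $k$, but you do not prove this, and attempting to control all the subsurface projections $d_{Y_i}$ simultaneously through the composite $h_0\cdots h_{k-1}$ is exactly the difficulty. Each $h_j$ with $j\neq i$ may move curves in a way that is opaque from the point of view of $Y_i$, so there is no a priori reason the projections cannot cancel; your phrase ``chained projections \ldots do not cancel'' is a hope, not an argument. (There is also a minor slip in the logic around BGI: what you need is that the geodesic contains the vertex $f^i(\alpha)$ itself, the unique curve with empty projection to $Y_i$; a vertex merely disjoint from $f^i(\alpha)$ still projects nontrivially.)

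The paper sidesteps this entirely by projecting to a \emph{single} subsurface $Y=S\setminus\alpha$ and running an induction on $k$ to prove the exact equality $d_\mathcal{C}(\alpha_0, F^k(\alpha_0))=km$. The inductive step is a two-way application of BGI: first, the induction hypothesis implies that a geodesic from $h_1(\alpha_2)$ to $h_1\cdots h_K(\alpha_{K+1})$ \emph{cannot} pass through $\alpha_1$ (else distances would exceed what the hypothesis allows), so its $Y$-projection has diameter at most $M$; second, combining this bound with the choice of $N_h$ (namely $d_Y(\alpha_2, h^n(\alpha_2)) > 2M + d_Y(\alpha_0,\alpha_2)$) forces $d_Y(\alpha_0, h_1\cdots h_K(\alpha_{K+1}))>M$, so the geodesic from $\alpha_0$ \emph{must} pass through $\alpha_1$. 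Then $d_\mathcal{C}(\alpha_0,\cdot)\ge m + d_\mathcal{C}(\alpha_1,\cdot)$, and applying $f^{-1}$ reduces the second term to the induction hypothesis. This reverse use of BGI---bounding a projection by knowing a geodesic avoids $\alpha_1$---is the key idea you are missing, and it is what makes the estimate uniform in $k$ with a single $N_h$.

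Finally, the theorem as stated does not assert that the $f_{h,n}$ are pseudo-Anosov or pairwise non-conjugate; those are handled separately (and the latter under an additional homological hypothesis), so that part of your outline is extraneous.
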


The property of non-conjugacy of the $f_{h,n}$ is discussed in Section \ref{end}. By the surface $S \setminus \alpha$ in the above theorem, we mean the surface of genus $g-1$ with two boundary components resulting from removing an open, annular neighbourhood of $\alpha$ from $S.$ Note that the pseudo-Anosov $h$ fixes the boundary of $S \setminus \alpha$ pointwise and can hence be extended to a homeomorphism of $S,$ i.e. when we write $h \circ f$ we think of $h$ as an element of $\Mod(S).$ Of course $h$ seen as an element of $\Mod(S)$ is not a pseudo-Anosov but is reducible, since it fixes the curve $\alpha.$\\

As described above, every pseudo-Anosov admits a power with an invariant geodesic axis in the curve graph. Hence, if the axis consists of non-separating curves, then this power falls into the set of pseudo-Anosovs Theorem \ref{construction of inf mult} applies to. Another example of a pseudo-Anosov that satisfies the prerequisites of the Theorem is one constructed as a product of large powers of Dehn twists about two filling curves. Such a pseudo-Anosov has an invariant geodesic axis (\cite{BK}). Again, if this axis consists of non-separating curves, then Theorem \ref{construction of inf mult} applies.\\

It is not clear whether the conditions in Theorem \ref{construction of inf mult} can be relaxed or whether a similar construction can be used for all pseudo-Anosovs and the question whether all stable curve graph translation lengths are of infinite multiplicity remains open.\\

\textbf{Outline.} 
In Section \ref{prelim}, we introduce the basic notions needed for this work. In particular, we state the definitions of pseudo-Anosovs and their stable curve graph translation length and present some concepts of the theory of train tracks which will be useful for the proof of the lower bound in Theorem \ref{mytheorem}. In Section \ref{top RV}, we present Rauzy-Veech induction which is used in order to define the maps $f_g.$ However, since we are interested in $l_{\mathcal{C}}(f_g)$ which is a topological invariant, we omit the discussion of flat structures -which is normally used in order to define mapping classes through Rauzy-Veech induction- and present a purely topological version of it. In Section \ref{properties section}, we discuss some properties of the notions defined in Section \ref{top RV} that will be useful for the proofs. In Section \ref{proof section}, we define the pseudo-Anosovs $f_g$ and prove Theorems \ref{mytheorem} and \ref{bettertheorem}.\\
In Section \ref{pennerexample}, we prove Theorem \ref{myothertheorem} and the Corollaries \ref{cor1} and \ref{cor2}. Finally, in Section \ref{end}, we present the proof of Theorem \ref{construction of inf mult}. The last two sections use completely different methods from the rest of the sections and can be read independently. In particular, in order to prove Theorem \ref{myothertheorem}, we modify Penner's famous example of a pseudo-Anosov. For the proof of Theorem \ref{construction of inf mult}, we use a result about subsurface projections from Masur--Minsky.\\

\textbf{Acknowledgements.}
I would like to thank Vaibhav Gadre for valuable conversations and useful ideas regarding this work. Furthermore, I thank the anonymous referee for helpful comments and suggestions that improved the exposition.\\

\textbf{Funding.}
This work was supported by the Additional Funding Programme for Mathematical Sciences, delivered by EPSRC (EP/V521917/1) and the Heilbronn Institute for Mathematical Research.

\section{Preliminaries}\label{prelim}

\subsection{Pseudo-Anosovs and translation lengths}

Let $S$ be a closed genus $g$ surface. If not specified otherwise, we always assume that $g \ge 2.$ The mapping class group of $S$ is the group of orientation preserving self-homeomorphisms of $S$ up to isotopy. We denote this group by $\Mod(S).$

\begin{definition}
A homeomorphism $\phi$ of $S$ is called \textit{pseudo-Anosov} if there are two measured foliations $(\mathcal{F}^s, \mu_s), (\mathcal{F}^u, \mu_u)$ and $\lambda > 1$ such that
\begin{itemize}
    \item $F^s$ and $F^u$ are transverse
    \item $\phi \cdot (\mathcal{F}^s, \mu_s) = (\mathcal{F}^s, \lambda^{-1}\mu_s)$
    \item $\phi \cdot (\mathcal{F}^u, \mu_u) = (\mathcal{F}^u, \lambda\mu_u)$
\end{itemize}
\end{definition}

For a pseudo-Anosov $\phi,$ we call $\mathcal{F}^s, \mathcal{F}^u$ the stable and unstable foliation respectively and $\lambda$ the stretch factor (or the dilatation) of $\phi.$ A mapping class $f \in \Mod(S)$ is called pseudo-Anosov, if there exists a representative homeomorphism $\phi$ of $f$ that is pseudo-Anosov. For a given pseudo-Anosov $f \in \Mod(S)$ there are different pseudo-Anosov homeomorphisms representing $f,$ however they are all conjugate to each other. Hence, the stretch factor of $f$ is well-defined. We sometimes write $\lambda(f)$ instead of just $\lambda$ to emphasize the pseudo-Anosov the stretch factor corresponds to. See (\cite{primer}, Part 3) for more information on pseudo-Anosov theory.\\

For a general metric space $(X, d)$ and an isometry $f$ of $X,$ the \textit{translation length} of $f$ is defined as $$\inf\limits_{x \in X} d(x, f(x)).$$ The \textit{stable translation length} is given by $$\liminf\limits_{n \to \infty} \frac{d(x, f^n(x))}{n},$$
where $x \in X$ is any point.\\

\begin{lemma}
    The stable translation length is well defined.
\end{lemma}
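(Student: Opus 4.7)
The plan is to establish two things: first, that the $\liminf$ is actually a genuine limit (equal to the infimum of the sequence $d(x, f^n(x))/n$), and second, that this value does not depend on the choice of basepoint $x$.

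For the first point, I would verify that the sequence $a_n := d(x, f^n(x))$ is subadditive. Indeed, using the triangle inequality together with the fact that $f$ is an isometry,
$$a_{n+m} = d(x, f^{n+m}(x)) \le d(x, f^n(x)) + d(f^n(x), f^{n+m}(x)) = a_n + a_m.$$
By Fekete's subadditive lemma, $\lim_{n \to \infty} a_n/n$ exists and coincides with $\inf_n a_n/n$, so in particular the $\liminf$ is a true limit.

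For the second point, given any other basepoint $y \in X$, the triangle inequality and the isometry property give
$$d(y, f^n(y)) \le d(y, x) + d(x, f^n(x)) + d(f^n(x), f^n(y)) = d(x, f^n(x)) + 2 d(x, y).$$
Dividing by $n$ and letting $n \to \infty$ shows that the limit defined via $y$ is bounded above by the one defined via $x$; swapping the roles of $x$ and $y$ yields equality. This confirms the stable translation length is independent of the choice of point.

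There is no real obstacle here; the only subtlety is remembering to use that $f$ is an isometry both to get subadditivity (so that Fekete applies) and to make the triangle-inequality comparison for different basepoints work out with only an additive error that vanishes upon dividing by $n$.
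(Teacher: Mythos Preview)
Your proof is correct. The independence-of-basepoint argument (your second point) is essentially identical to the paper's proof: both apply the triangle inequality together with the isometry property of $f$ to obtain $d(x,f^n(x)) \le 2d(x,y) + d(y,f^n(y))$, divide by $n$, pass to the $\liminf$, and then swap the roles of $x$ and $y$.

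The paper's proof stops there; it does not address your first point at all. Your use of subadditivity and Fekete's lemma to upgrade the $\liminf$ to a genuine limit is an addition beyond what the paper proves. Strictly speaking this is not needed for well-definedness as the paper intends it (the definition is stated with a $\liminf$, so existence is automatic), but it is a nice observation that costs essentially nothing and gives a stronger conclusion.
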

\begin{proof}
    Let $y \neq x$ be any other point in $X.$ Then by the triangle inequality
    $$d(x, f^n(x)) \le d(x, y) + d(y, f^n(y)) + d(f^n(y), f^n(x))$$ 
    for any $n \in \mathbb{N}.$ Now since $f$ is an isometry of $X,$ we have $d(x, y) = d(f^n(x), f^n(y))$ and therefore
    $$d(x, f^n(x)) \le 2d(x, y) + d(y, f^n(y)).$$
    Dividing by $n$ and taking the lim inf on both sides yields
    $$\underset{n \to \infty}{\liminf} \ \frac{d(x, f^n(x))}{n} \le \underset{n \to \infty}{\liminf} \ \frac{d(y, f^n(y))}{n}.$$
    By reversing the roles of $x$ and $y$ one obtains the opposite inequality, which in total shows that the definition of the stable translation length is independent of the choice of the point $x$. 
\end{proof}

Let $\mathcal{T}(S)$ denote the Teichm\"uller space of $S.$ The mapping class group acts by isometries on $\mathcal{T}(S)$ with respect to the Teichm\"uller metric. For a pseudo-Anosov $f,$ its Teichm\"uller space translation length and stable translation length are the same and given by $\log(\lambda(f))$ (see \cite{primer}, Chapter 14 for details). We denote the (stable) Teichm\"uller space translation length of a pseudo-Anosov $f$ by $l_\mathcal{T}(f).$\\

Let $\mathcal{C}(S)$ denote the curve graph of $S.$ This is the graph with vertices corresponding to isotopy classes of essential simple closed curves, where two vertices are joined by an edge if there exist representative curves that are disjoint. Since homeomorphisms preserve the property of being an essential simple closed curve as well as disjointness of two such curves, the mapping class group $\Mod(S)$ acts on the curve graph $\mathcal{C}(S)$ by graph automorphisms. We equip $\mathcal{C}(S)$ with the path metric $d_\mathcal{C}$ where each edge has length $1.$ The mapping class group action is an action by isometries with respect to $d_\mathcal{C}.$ A pseudo-Anosov $f$ doesn't fix any finite set of curves on the surface and therefore its curve graph translation length is always greater or equal to $1.$ It is more interesting to consider the stable translation length.\\

For $f \in \Mod(S),$ the \textit{stable curve graph translation length} of $f$ is given by
$$l_\mathcal{C}(f) = \underset{n \to \infty}{\liminf} \ \frac{d_\mathcal{C}(\alpha, f^n(\alpha))}{n},$$
where $\alpha \in \mathcal{C}(S)$ is any vertex. From now on, when it comes to the curve graph, we will only consider the stable translation length.  

\subsection{Train tracks}\label{trains}

In this section, we introduce the notion of train tracks and discuss their importance for pseudo-Anosov maps. As before, let $S$ denote a closed surface of genus $g.$ For the following definition, we equip $S$ with a smooth structure.

\begin{definition}
    A \textit{train track} $\tau$ on $S$ is an embedded graph such that each edge is a smooth path and at each vertex all adjacent edges are mutually tangent. 
\end{definition}

Vertices of $\tau$ are usually referred to as switches and edges of $\tau$ as branches. The tangency condition yields a splitting of the set of branches adjacent to a switch into two sets, the incoming and outgoing branches. In the following, we mention some important notions that we make use of. For a more detailed introduction to train tracks see \cite{PH}.\\

We say that a train track is \textit{large} if all components of $S \setminus \tau$ are polygons.\\

A \textit{train route} is a smooth path in $\tau.$ Hence, a train route passing through a switch can only pass from an incoming to an outgoing branch or the other way around. A curve $\gamma$ on $S$ is \textit{carried} by $\tau$ if it is isotopic to a closed train route. A train track is \textit{recurrent} if every branch is contained in a closed train route. A train track is \textit{transversely recurrent} if for every branch there exists a simple closed curve intersecting the branch efficiently, i.e. there are no bigons between the branch and the curve. Finally, a train track is \textit{birecurrent} if it is both recurrent and transversely recurrent.\\

A \textit{measure} $\mu$ on $\tau$ is an assignment of a non-negative real number (also called a \textit{weight}) to each branch of $\tau$ such that at each switch the sum of weights of the incoming branches equals the sum of weights of the outgoing ones. Denote by $P(\tau)$ the set of measures on $\tau.$ Note that a curve carried by $\tau$ induces the counting measure on $\tau.$ Hence, we can think of such a curve as an element of $P(\tau).$ Let $\text{int}(P(\tau))$ denote the set of measures that are positive on each branch.\\

A diagonal extension of $\tau$ is a train track $\tau'$ which consists of the same switches and branches as $\tau$ and possibly has some extra branches (called \textit{diagonals}) which start and terminate in corners of some complementary region of $S \setminus \tau.$ Let $E(\tau)$ be the set of diagonal extensions of $\tau$ and $PE(\tau)$ the union of the sets of measures of all diagonal extensions. Furthermore, let $\text{int}(PE(\tau))$ be the set of measures in $PE(\tau)$ that are positive on each branch of $\tau.$\\ 

The next lemma, often referred to as the \textit{Nesting Lemma}, is crucial in our computation of the lower bound in Theorem \ref{mytheorem}. In the following, one should think of the sets $PE(\tau)$ and $\text{int}(PE(\tau))$ as subsets of the curve graph $\mathcal{C}(S)$, i.e. only consider the measures coming from curves, and $\mathcal{N}_1$ denotes the 1-neighbourhood of a subset of $\mathcal{C}(S).$

\begin{lemma}
    Let $\tau$ be a large recurrent train track. Then 
    $$\mathcal{N}_1(\text{int}(PE(\tau))) \subset PE(\tau).$$
\end{lemma}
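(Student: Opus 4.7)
The plan is to exhibit a diagonal extension $\tau_\beta$ of $\tau$ that carries $\beta$, thereby putting $\beta$ in $PE(\tau)$. The case $\beta = \alpha$ is trivial, so assume $\beta \neq \alpha$ and realize $\alpha, \beta$ as disjoint simple closed curves, which is possible since $d_{\mathcal{C}}(\alpha, \beta) \le 1$. First I would fix good representatives: pick a diagonal extension $\tau_\alpha$ of $\tau$ carrying $\alpha$ with positive weight on every branch of $\tau$, realize $\alpha$ inside a thin regular neighborhood of $\tau_\alpha$, and realize $\beta$ transverse to $\tau$, disjoint from $\alpha$, and with $|\beta \cap \tau|$ minimal among all such representatives. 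Since $\tau$ is large, every complementary region $P$ of $\tau$ is a disk with cusps at its corners.

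The central claim is that in any complementary polygon $P$ every arc $\gamma$ of $\beta \cap P$ runs between two distinct branches of $\tau$. If not, $\gamma$ together with a sub-arc of a single branch bounds a disk $D \subset P$. Now the arcs of $\alpha \cap P$ are thin strips along the diagonals of $\tau_\alpha$ in $P$, so they meet $\partial P$ only in small clusters near the cusps of $P$. Using the positivity of $\alpha$ on every branch of $\tau$ — in particular that such $\alpha$-clusters exist near every cusp — one can verify that $D$ can be chosen so that $D \cap \alpha = \emptyset$. Then an isotopy of $\beta$ across $D$ supported in $S \setminus \alpha$ would reduce $|\beta \cap \tau|$, contradicting minimality.

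Granting the claim, each arc $\gamma$ of $\beta \cap P$ can be isotoped within the disk $P$ by sliding its endpoints along the two distinct branches to cusps, producing a diagonal of $P$. Performing this in every polygon yields a family $\mathcal{D}$ of pairwise disjoint diagonals (disjoint because $\beta$ is simple), and then $\tau_\beta := \tau \cup \mathcal{D}$ is a diagonal extension of $\tau$, tangent at each cusp by construction of the diagonals, and $\beta$ is a train route on $\tau_\beta$. So $\beta \in PE(\tau)$. The main obstacle is the bigon-removal step: verifying that a $\beta$-$\tau$ bigon can always be pushed into a region of $P$ disjoint from $\alpha$. This is precisely where the hypothesis $\alpha \in \text{int}(PE(\tau))$ rather than merely $\alpha \in PE(\tau)$ is used — it guarantees that $\alpha$ interacts with every cusp of every polygon — and it is the crucial technical point of the proof.
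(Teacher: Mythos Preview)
The paper does not give its own proof of the Nesting Lemma; it simply records that the result is due to Masur--Minsky (Lemma~4.4 of \cite{MM}) in the birecurrent case, with the relaxation to recurrent tracks carried out in \cite{Vaibhav}. So there is nothing in the paper to compare your argument against beyond those references; what you have written is a sketch of the Masur--Minsky argument itself, and on that level the overall architecture --- realise $\alpha$ near a diagonal extension, put $\beta$ in minimal position transverse to $\tau$ while disjoint from $\alpha$, eliminate bigons, slide the remaining arcs onto diagonals --- is the right one.

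That said, the sketch has real gaps. First, a terminological one: the sides of a complementary polygon $P$ are smooth boundary arcs between consecutive cusps, and a single side can traverse several branches while a single branch can appear as two different sides of $P$; your ``two distinct branches'' should read ``two distinct sides''. More substantively, your account of where positivity enters is muddled. For a bigon $D$ bounded by an arc of $\beta$ and a sub-arc interior to a single side, any arc of $\alpha\cap P$ is parallel to a diagonal of $\tau_\alpha$ and hence has both endpoints at cusps --- so it cannot lie in $D$ at all, with or without positivity. Saying that positivity produces ``$\alpha$-clusters near every cusp'' and that this \emph{helps} clear $D$ gets the logic backwards. The genuine role of $\alpha\in\mathrm{int}(PE(\tau))$ in the Masur--Minsky proof is not in clearing polygon bigons but in controlling $\beta$ inside the fibered neighbourhood $N(\tau)$: because $\alpha$ has positive weight on every branch, every rectangle of $N(\tau)$ contains $\alpha$-strands, and $\beta$, being disjoint from $\alpha$, is trapped in the thin strips between them; this is what forces the arcs of $\beta$ to be either transverse to the ties (carried) or to cross a rectangle parallel to a tie, and rules out the configurations that would obstruct carrying by a diagonal extension. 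Your sketch says nothing about what happens to $\beta$ inside $N(\tau)$, and that is exactly where the hypothesis does its work; without it the inclusion $\mathcal{N}_1(PE(\tau))\subset PE(\tau)$ is false, so the step cannot be as innocuous as you present it. You are right that this is ``the crucial technical point'', but you have located it in the wrong place.
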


\begin{proof}
    The Nesting Lemma is due to Masur--Minsky (\cite{MM}, Lemma 4.4). They prove it for the stronger assumption of the train track being birecurrent. The weakening of this to require the train track to only be recurrent is done in (\cite{Vaibhav}, Lemma 3.2). 
\end{proof}

A train track $\sigma$ is \textit{carried} by the train track $\tau$ if there is an isotopy of $S$ that takes every train route of $\sigma$ to a train route of $\tau.$ We write $\sigma \prec \tau.$ A train track is \textit{invariant} for a homeomorphism $f$ if $f(\tau) \prec \tau.$ In this case, we can define a matrix $V_f$ as follows: Let $n$ be the number of branches of $\tau.$ Let $b_1, ... , b_n$ be a numbering of the branches and $V_f$ be the $n \times n$ matrix whose $(i,j)$-entry counts how often the branch $f(b_j)$ passes over the branch $b_i.$ Hence, the $j^{\text{th}}$ column of $V_f$ describes the train route $f(b_j).$ We call $V_f$ a \textit{train track matrix} for $f.$ Note that $V_f$ depends on the invariant train track $\tau$ as well.\\

Train tracks play an important role in the theory of pseudo-Anosovs, because for every pseudo-Anosov, there exists an invariant train track. The corresponding train track matrix is a Perron-Frobenius matrix and its Perron-Frobenius eigenvalue is the stretch factor of the pseudo-Anosov (\cite{primer}, Chapter 15.2). We will use an invariant train track and its corresponding train track matrix to compute the lower bound in Theorem \ref{mytheorem}. 

\subsection{Translation surfaces and affine pseudo-Anosovs}\label{affine pA}

We recall the necessary definitions of the terms that come up in Theorem \ref{bettertheorem}.

\begin{definition}
    A \textit{translation surface} is a pair $(X, \omega)$ where $X$ is a Riemann surface and $\omega$ a non-zero holomorphic $1$-form on $X.$ 
\end{definition}  

The name translation surface is justified by the fact that a translation surface admits a natural atlas of charts away from the zeros of $\omega$ whose transition functions are translations (see, for example, \cite{AW}). The zeros of $\omega$ are usually referred to as \textit{singularities}.\\

A homeomorphism $f$ of a translation surface $(X, \omega)$ is called \textit{affine} if it sends singularities to singularities and is affine on the natural charts. Note that this is well-defined since the chart transitions are translations, so in particular affine maps.\\

A translation surface $(X, \omega)$ is hyperelliptic if the underlying Riemann surface $X$ is hyperelliptic.\\

Given a translation surface $(X, \omega),$ let $x_1, ... , x_n$ be the zeros of $\omega$ and let $k_1, ... , k_n$ be the orders of the $x_i$ respectively. The Riemann-Roch Theorem ensures that $k_1 + ... + k_n = 2g-2$ where $g$ is the genus of $X.$ Thus, for any given tuple $(k_1, ... , k_n)$ with $k_1+...+k_n = 2g-2,$ one can define the space $\mathcal{H}(k_1, ... , k_n)$ to be the space of all translation surfaces with exactly $n$ singularities of orders $k_1, ... , k_n.$ These spaces are referred to as \textit{strata} of translation surfaces. Kontsevich and Zorich classify the connected components of these strata (\cite{KZ}). In particular, the classification shows that the two strata $\mathcal{H}(2g-2)$ and $\mathcal{H}(g-1, g-1)$ contain a connected component that consists entirely of hyperelliptic translation surfaces. These connected components are referred to as \textit{hyperelliptic components}.\\

If $S$ is a topological closed genus $g$ surface, then we say that $f \in \Mod(S)$ is affine for a translation surface in a hyperelliptic component (or simply $f$ is in a hyperelliptic component) if there exists a representative $\phi$ of $f,$ a translation surface $(X, \omega)$ in a hyperelliptic component, a homeomorphism $\psi: S \to X$ and an affine homeomorphism $\phi_X$ of $(X, \omega)$ such that the following diagram commutes:

\begin{center}
    \begin{tikzcd}
        S \arrow{r}{\phi} \arrow{d}{\psi} & S \arrow{d}{\psi}\\
        X \arrow{r}{\phi_X} & X.
    \end{tikzcd}
\end{center}

\section{Topological Rauzy-Veech induction}\label{top RV}

In this section, we present the well-known method of using Rauzy-Veech induction to obtain mapping classes. This relies on the theory of translation surfaces. However, since we are interested in the stable curve graph translation length, which is a topological invariant of a pseudo-Anosov, we present a purely topological version of Rauzy-Veech induction that omits the discussion of translation surfaces.

\subsection{Rauzy-Veech induction on permutations}\label{RV induction}

Given an interval exchange map, Rauzy-Veech induction is a method to produce a new interval exchange map out of the given one. An interval exchange map together with some extra data gives rise to a translation surface. Therefore, the study of translation surfaces is closely related to that of interval exchange maps and Rauzy-Veech induction can be used to define homeomorphisms that are affine with respect to a certain translation surface. We refer the reader to \cite{Yoc} for an introduction to this theory. Boissy--Lanneau use Rauzy-Veech induction in order to define pseudo-Anosov maps that are affine with respect to some translation surface in the hyperelliptic stratum component of $\mathcal{H}(2g-2)$ and find the pseudo-Anosov with minimal stretch factor out of all of these (\cite{BL}). For given $g,$ we denote the pseudo-Anosov that attains this minimum by $f_g.$ Boissy--Lanneau proceed to do the same with the hyperelliptic stratum component of $\mathcal{H}(g-1, g-1).$ However, the stretch factor of $f_g$ is less than the minimal one for $\mathcal{H}(g-1, g-1),$ and hence it is the minimal one over all hyperelliptic components. This is why we focus on $f_g.$ We point out though that the techniques presented in the computation of $l_\mathcal{C}(f_g)$ below can be used to compute the stable curve graph translation length of any pseudo-Anosov defined in terms of Rauzy-Veech induction.\\

In this work, our goal is to compute the stable curve graph translation length of the $f_g.$ In order to define the pseudo-Anosovs $f_g$ (as in \cite{BL}, Section 5), we use Rauzy-Veech induction. Since we are interested in $l_\mathcal{C}(f_g)$ which is a purely topological invariant, we omit the discussion of translation surfaces and present a more topological version of Rauzy-Veech induction. In particular, instead of an interval exchange map, we require only a permutation.\\ 

Let $\pi \in S_n$ be a permutation. We say that $\pi$ is \textit{irreducible}, if $\pi(\{1, ... , k\}) = \{1, ... , k\}$ implies $k=n.$ Throughout, we assume that all permutations are irreducible. Note that this ensures that $\pi(n) \neq n$ which is necessary in order to define the Rauzy-Veech induction below.\\

We want to work with so called labeled permutations. Let $\mathcal{A}$ be a finite alphabet with $n$ elements and $\pi_t: \mathcal{A} \to \{1, ... ,n\}$ and $\pi_b: \mathcal{A} \to \{1, ... ,n\}$ bijections, such that $\pi = \pi_b \circ \pi^{-1}_t.$ We call the pair $(\pi_t, \pi_b)$ a \textit{labeled permutation} and say that this labeled permutation is irreducible, if the corresponding $\pi$ is irreducible. Here, the subscripts $t$ and $b$ stand for "top" and "bottom" respectively. Note that $\pi$ does not determine $\pi_t$ and $\pi_b$ uniquely. In other words, after fixing an alphabet $\mathcal{A},$ we have a surjective, but not injective map from the set of labeled permutation to $S_n.$\\

As an example, consider $\pi = (1 \ 2 \ 3) \in S_3$ and $\mathcal{A} = \{A, B, C\}.$ One choice for $\pi_t, \, \pi_b$ would be $\pi_t(A) = 1, \, \pi_t(B) = 3, \, \pi_t(C) = 2$ and $\pi_b(A) = 2, \, \pi_b(B) = 1, \, \pi_b(C) = 3.$ We usually represent our labeled permutation as a matrix

$$\begin{pmatrix}
    \pi^{-1}_t(1) & ... & \pi^{-1}_t(n)\\
    \pi^{-1}_b(1) & ... & \pi^{-1}_b(n)\\
\end{pmatrix},$$

which in our example gives

$$\begin{pmatrix}
    A & C & B\\
    B & A & C\\
\end{pmatrix}.$$

Given a labeled permutation $(\pi_t, \pi_b),$ we construct a surface $X(\pi_t, \pi_b)$ by taking a regular $2n$-gon, labeling its top sides according to $\pi_t$ and its bottom sides according to $\pi_b$ and gluing the sides with the same label in the way they are oriented in Figure \ref{fig:2n-gon}. Here, the gluings are just topological and not necessarily by translations. In order to distinguish between the polygon and the resulting surface after side identifications, we denote the polygon by $P(\pi_t, \pi_b).$ Figure \ref{fig:2n-gon} shows an example for the labeled permutation $$\begin{pmatrix}
    A & B & C & D\\
    D & A & C & B\\
\end{pmatrix}.$$

\begin{figure}[h]
    \centering
    \begin{tikzpicture}
        \node[anchor=south west,inner sep=0] at (0,0){\includegraphics[scale=0.5]{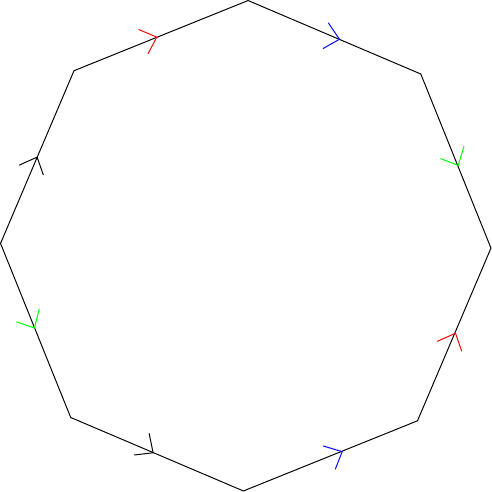}};
        \node at (0,3) {$A$};
        \node at (1.3, 4.2) {$\textcolor{red}{B}$};
        \node at (3, 4.2) {$\textcolor{blue}{C}$};
        \node at (4.15,3) {$\textcolor{green}{D}$};
        \node at (1,0) {$A$};
        \node at (4.1, 1.1) {$\textcolor{red}{B}$};
        \node at (3, 0) {$\textcolor{blue}{C}$};
        \node at (0,1.1) {$\textcolor{green}{D}$};
    \end{tikzpicture}
    \caption{An example of the $2n$-gon construction for $n = 4$} 
    \label{fig:2n-gon}
\end{figure}

We now define Rauzy-Veech induction. Let $(\pi_t, \pi_b)$ be a labeled permutation with $\pi = \pi_b \circ \pi^{-1}_t.$ Rauzy-Veech induction consists of two purely combinatorial ways to obtain a new labeled permutation from the given one. The two ways are called top, resp. bottom Rauzy-Veech induction, which we denote by $\mathcal{R}_t$ and $\mathcal{R}_b$ respectively.

\begin{itemize}
    \item $\mathcal{R}_t$:\\ 
    Let $\alpha := \pi^{-1}_t(n)$ and $\beta := \pi^{-1}_b(n).$ In this case, we say that the top is the winner and the bottom the loser, or equivalently $\alpha$ is the winner, whereas $\beta$ is the loser. We refer to the Rauzy-Veech induction in this case as being of type $t.$\\
    We define the Rauzy-Veech induction $\mathcal{R}_t(\pi_t, \pi_b)$ as $(\pi'_t, \pi'_b),$ where $\pi'_t = \pi_t$ and $\pi'_b$ differs from $\pi_b$ by moving $\beta$ to the right of $\alpha$ and translating everything afterwards to the right by one. For example
    $$\begin{pmatrix}
    A & B & C & D\\
    D & C & B & A\\
\end{pmatrix} \xlongrightarrow{\mathcal{R}_t} \begin{pmatrix}
    A & B & C & D\\
    D & A & C & B\\
\end{pmatrix}.$$

    \item $\mathcal{R}_b$:\\ 
    Let $\alpha := \pi^{-1}_t(n)$ and $\beta := \pi^{-1}_b(n).$ In this case, we say that the bottom is the winner and the top the loser, or equivalently $\beta$ is the winner, whereas $\alpha$ is the loser. We refer to the Rauzy-Veech induction in this case as being of type $b.$\\
    We define the Rauzy-Veech induction $\mathcal{R}_b(\pi_t, \pi_b)$ as $(\pi'_t, \pi'_b),$ where $\pi'_b = \pi_b$ and $\pi'_t$ differs from $\pi_t$ by moving $\alpha$ to the right of $\beta$ and translating everything afterwards to the right by one. For example
    $$\begin{pmatrix}
    A & B & C & D\\
    D & C & B & A\\
\end{pmatrix} \xlongrightarrow{\mathcal{R}_b} \begin{pmatrix}
    A & D & B & C \\
    D & C & B & A\\
\end{pmatrix}.$$
\end{itemize}

We define a graph called the \textit{labeled Rauzy diagram} $\mathcal{D}_n$. The vertices of this graph correspond to irreducible labeled permutations of fixed length $n$ and we connect two vertices $(\pi_t, \pi_b), (\pi'_t, \pi'_b)$ by a with $t$ resp. $b$ labeled, oriented edge (from $(\pi_t, \pi_b)$ to $(\pi'_t, \pi'_b)$) if $\mathcal{R}_t(\pi_t, \pi_b) = (\pi'_t, \pi'_b)$ resp. $\mathcal{R}_b(\pi_t, \pi_b) = (\pi'_t, \pi'_b).$\\

The labeled Rauzy diagram is in general not a connected graph, but we always restrict ourselves to a connected component. We will be interested in the connected component of the labeled permutation $$\begin{pmatrix}
    \alpha_1 & \alpha_2 & ... & \alpha_n\\
    \alpha_n & \alpha_{n-1} & ... & \alpha_1\\
\end{pmatrix}.$$
For example, if $n = 3,$ the connected component of $$\begin{pmatrix}
    A & B & C \\
    C & B & A\\
\end{pmatrix}$$ 
is

$$ \text{\tiny t}\circlearrowright \begin{pmatrix}
    A & C & B \\
    C & B & A\\
\end{pmatrix} \substack{\xlongrightarrow{b}\\ \xlongleftarrow{b}} \begin{pmatrix}
    A & B & C \\
    C & B & A\\
\end{pmatrix} \substack{\xlongrightarrow{t}\\ \xlongleftarrow{t}} \begin{pmatrix}
    A & B & C \\
    C & A & B\\ 
\end{pmatrix} \circlearrowleft \text{\tiny{b}}.\\[2ex]$$

Finally, we want to assign a matrix to every edge of the labeled Rauzy diagram. Let $e$ be an edge of $\mathcal{D}_n.$ Let $(\pi_t, \pi_b)$ be the initial vertex of $e.$ Since $e$ corresponds to a Rauzy-Veech move (either $t$ or $b$), it determines a winner-loser pair $(\alpha, \beta).$ Let $E_{\alpha, \beta}$ be the matrix with a single non-zero entry equal to $1$ in the $(\alpha, \beta)$ position and let $V_{\alpha, \beta} = Id + E_{\alpha, \beta}.$ We assign to $e$ the matrix $V_{\alpha, \beta}.$

\subsection{Construction of mapping classes through Rauzy-Veech induction}\label{construction}

Let $(\pi_t, \pi_b)$ be a labeled permutation, $P(\pi_t, \pi_b)$ the corresponding polygon and $X(\pi_t, \pi_b)$ the corresponding surface. We want to interpret Rauzy-Veech induction on $(\pi_t, \pi_b)$ in a geometric way, i.e. relate $X(\pi_t, \pi_b)$ and $X(\mathcal{R}(\pi_t, \pi_b))$. Consider the following cut and paste process on $P(\pi_t, \pi_b).$\\

Let $T$ be the triangle consisting of the top side corresponding to $\alpha := \pi_t^{-1}(d),$ the bottom side corresponding to $\beta := \pi_b^{-1}(d)$ and the straight line joining their two distinct endpoints, see Figure \ref{fig:triangle}. Cut the triangle $T$ and glue it back by either gluing the top side $\alpha$ to the bottom side corresponding to $\alpha$ or gluing the bottom side $\beta$ to the top side corresponding to $\beta.$ After having glued back $T,$ we choose an ambient isotopy of the plane to isotope the resulting polygon back to being a regular $2n$-gon, see Figure \ref{fig:cut and paste}.\\ 

\begin{figure}[h]
    \centering
    \begin{tikzpicture}
    
        \node[anchor=south west,inner sep=0] at (0,0){\includegraphics[scale=0.7]{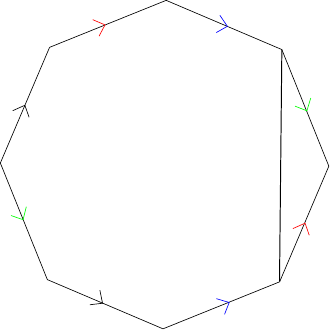}};
        \node at (3.6,1.95) {$T$};
        \node at (4, 2.8) {$\textcolor{green}{\alpha}$};
        \node at (4, 1.2) {$\textcolor{red}{\beta}$};
    \end{tikzpicture}
    \caption{The triangle $T$}
    \label{fig:triangle}
\end{figure}

\begin{figure}[h]
    \centering
    \includegraphics[scale=0.7]{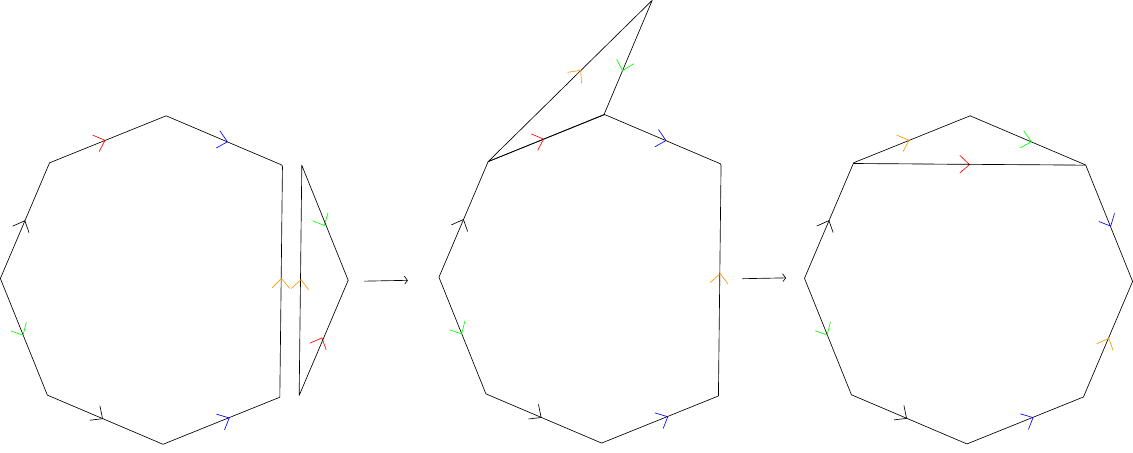}
    \caption{The cut and paste construction for a bottom move}
    \label{fig:cut and paste}
\end{figure}

We refer to the case where we glue the $\alpha$'s as a top move and to the case where we glue the $\beta$'s as a bottom move. In both cases, the new polygon has two new unlabeled sides, which we label by $\alpha$ in the case of a top move and by $\beta$ in the case of a bottom move.\\

 In total, we obtain that performing a top or bottom move to $P(\pi_t, \pi_b)$ as described above yields a new $2n$-gon together with a labeled permutation that describes the gluing of the sides, where this new labeled permutation is exactly obtained by Rauzy-Veech induction of type $t$ resp. $b$ on $(\pi_t, \pi_b).$ After gluing sides according to the new permutation, we obtain the surface $X(\mathcal{R}(\pi_t, \pi_b)).$\\

In summary, we now have a way to assign to a given labeled permutation a surface and interpret a Rauzy-Veech move (equivalently an outgoing edge in the labeled Rauzy diagram) as a cut and paste move of a triangle in order to obtain the surface corresponding to the resulting labeled permutation. We now want to use this interpretation to define mapping classes.\\

Consider an edge path $\gamma$ in the labeled Rauzy diagram, so that its endpoints define the same unlabeled permutation. We call such a path an \textit{allowed path}. In other words, an allowed path $\gamma$ is a finite sequence of $t$'s and $b$'s such that its starting point $(\pi_t, \pi_b)$ and endpoint $(\pi'_t, \pi'_b)$ satisfy $\pi_b \circ \pi^{-1}_t = \pi'_b \circ \pi'^{-1}_t.$ Since the unlabeled permutations are the same, the associated surfaces $X(\pi_t, \pi_b)$ and $X(\pi'_t, \pi'_b)$ admit a natural homeomorphism $\phi: X(\pi_t, \pi_b) \to X(\pi'_t, \pi'_b),$ which just comes by identifying the $2n$-gons $P(\pi_t, \pi_b)$ and $P(\pi'_t, \pi'_b)$ via the identity map. We refer to $\phi$ as the change of labeling. There is a second homeomorphism from $X(\pi_t, \pi_b)$ to $X(\pi'_t, \pi'_b)$ which is given by keeping track of the moves of $\gamma,$ i.e. sending each point to the corresponding point after each cut and pasting step. We denote this homeomorphism by $\psi.$ Note that at each cut and paste step we have chosen an ambient isotopy of the plane to obtain a regular $2n$-gon (cf. right arrow in Figure \ref{fig:cut and paste}). Any other choice of isotopies would result in an isotopic homeomorphism $\psi'.$ For us, it is enough to define $\psi$ up to isotopy, since we only need a well defined mapping class. We refer to $\psi$ as the cut and paste homeomorphism.\\

For an allowed path $\gamma,$ we let $f_\gamma := \psi^{-1} \circ \phi  \in \Mod(X(\pi_t, \pi_b)),$ where $(\pi_t, \pi_b)$ is the starting point of $\gamma$ and $\phi, \psi$ are as above.\\

To $f_\gamma$ we associate a matrix as follows: Let $e_1, ... , e_k$ be the edges of $\gamma$ ordered from first to last, i.e. the starting point of $e_1$ is $(\pi_t, \pi_b)$ and the endpoint of $e_k$ is $(\pi'_t, \pi'_b).$ Let $V_{e_i}$ be the corresponding matrices as discussed at the end of Section \ref{RV induction}. Finally, let $P$ be the permutation matrix that is $1$ in the $(\alpha, \beta)$ entry if and only if $\phi^{-1}(\alpha) = \beta,$ or equivalently $\phi(\beta) = \alpha,$ and $0$ otherwise. Here, $\phi(\beta) = \alpha$ is to be understood as $\phi$ maps the side labeled $\beta$ in the $2n$-gon to the side labeled $\alpha$ in the other $2n$-gon. We let $V_\gamma := V_{e_1} \cdot ... \cdot V_{e_k} \cdot P.$\\

Note that $V_\gamma$ is a matrix with only non-negative entries. We say that $V_\gamma$ is \textit{primitive} if there exists a $k \in \mathbb{N}$ such that $V_\gamma^k$ is positive, i.e. consists of strictly positive entries. In \cite{Veech}, it is shown that whenever the matrix $V_\gamma$ is primitive, the corresponding mapping class $f_\gamma$ is a pseudo-Anosov. Furthermore, the spectral radius of $V_\gamma$ is the stretch factor of $f_\gamma.$

\section{Some properties of $f_\gamma$ and $V_\gamma$}\label{properties section}

Let $\gamma$ be an allowed path in the labeled Rauzy diagram. In this section, we want to understand how the matrix $V_\gamma$ describes the map $f_\gamma.$ Our first goal is to show that there is an invariant train track $\tau_\gamma$ for $f_\gamma$ such that $V_\gamma$ is the corresponding train track matrix.\\

Let $(\pi_t, \pi_b)$ be the starting point of $\gamma$ and let $P_\gamma := P(\pi_t, \pi_b), \ X_\gamma := X(\pi_t, \pi_b)$ be the corresponding polygon and surface. Let $\tau_\gamma$ be the train track on $X_\gamma$ which consists of a single switch in the middle of $P_\gamma$ and $n$ branches, each of which goes from the switch to a bottom side of $P_\gamma,$ comes back from the corresponding top side of $P_\gamma$ and connects back to the switch. For the labeled permutation 

$$\begin{pmatrix}
    A & B & C & D\\
    D & A & C & B\\
\end{pmatrix}$$

an example can be seen in Figure \ref{fig:train track}.

\begin{figure}[h]
    \centering
    \includegraphics[scale=0.35]{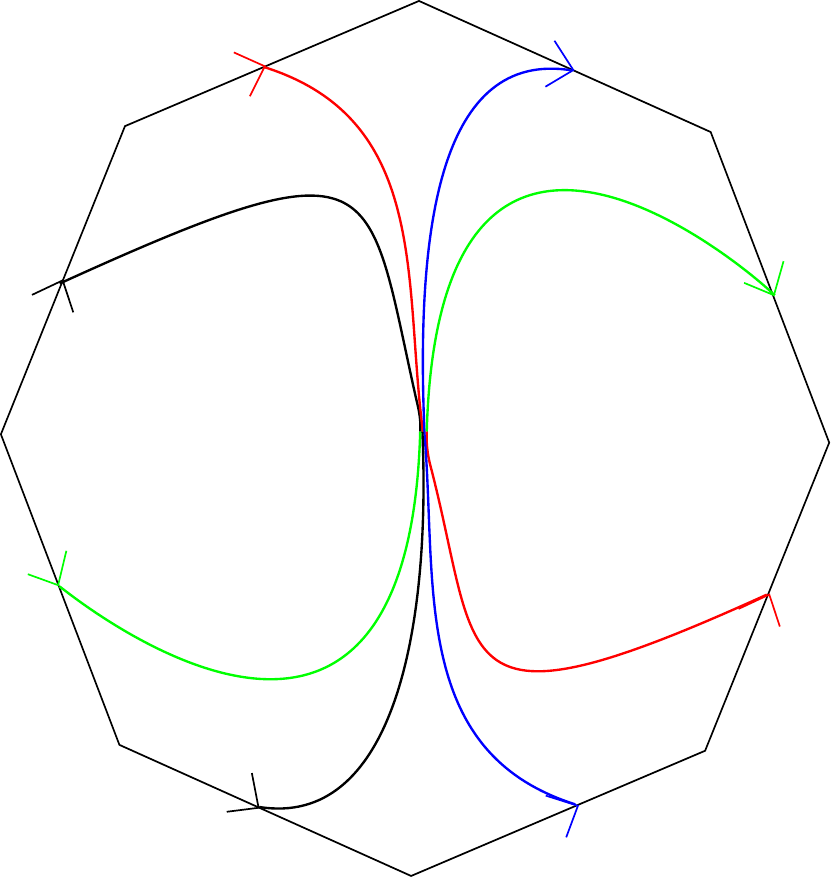}
    \caption{The train track $\tau$}
    \label{fig:train track}
\end{figure}

Label each branch of $\tau_\gamma$ with the same alphabet used to define the labeled permutations $\pi_t, \pi_b,$ i.e. call a branch $\alpha$ if it connects the sides of $P_\gamma$ labeled $\alpha.$ We show:

\begin{lemma}\label{train track lemma}
    $\tau_\gamma$ is an invariant train track for $f_\gamma$ and the corresponding train track matrix is $V_\gamma.$
\end{lemma}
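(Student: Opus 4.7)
The plan is to split $f_\gamma = \psi^{-1} \circ \phi$ into its two pieces and show that each takes branches of $\tau_\gamma$ to train routes whose incidences are recorded by the corresponding factor of $V_\gamma$; functoriality of incidence matrices under composition then gives the result. More precisely, I will establish (A) that $\psi^{-1}(\tau_{\gamma'})$ is carried by $\tau_\gamma$ on $X_\gamma$ with incidence matrix $V_{e_1} V_{e_2} \cdots V_{e_k}$, and (B) that $\phi(\tau_\gamma)$ coincides with $\tau_{\gamma'}$ as an embedded graph on $X_{\gamma'}$, with the induced relabeling of branches recorded by the permutation matrix $P$. Combining these, $f_\gamma(\tau_\gamma) = \psi^{-1}(\phi(\tau_\gamma))$ is carried by $\tau_\gamma$ with incidence matrix $V_{e_1} \cdots V_{e_k} \cdot P = V_\gamma$.

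Claim (B) is essentially tautological. Since $\phi$ acts as the identity on the underlying regular $2n$-gon, the branch of $\tau_\gamma$ labeled $\beta$ --- a loop passing through the sides at positions $\pi_t(\beta)$ and $\pi_b(\beta)$ --- is sent to the loop on $X_{\gamma'}$ through the same two positions, which in the new labeling is precisely the branch of $\tau_{\gamma'}$ labeled $\phi(\beta)$. The matrix sending $\mathbf{e}_\beta$ to $\mathbf{e}_{\phi(\beta)}$ is by definition $P$.

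For Claim (A) I will induct on the length $k$ of $\gamma$. The key case is $k=1$: a single Rauzy-Veech move $e$ from $v$ to $v'$, say of type $t$ with winner $\alpha$ and loser $\beta$. I need to verify geometrically that $\psi_e^{-1}(\tau_{v'})$ is carried by $\tau_v$ with incidence matrix $V_e = I + E_{\alpha, \beta}$. With a careful choice of the ambient isotopy that restores a regular $2n$-gon after the cut-and-paste, every side of $P_{v'}$ except the two new $\alpha$-labeled diagonals and the relocated bottom-$\beta$ side corresponds under $\psi_e^{-1}$ to the side of $P_v$ carrying the same label; hence every branch of $\tau_{v'}$ labeled $\delta$ with $\delta \neq \beta$ pulls back to (a curve isotopic to) the branch of $\tau_v$ labeled $\delta$, which produces all diagonal entries of $V_e$. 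The branch $\beta$ of $\tau_{v'}$ is the special one: its loop now reaches the relocated bottom-$\beta$ side, which sits inside the cut triangle $T$ whose top-$\alpha$ has been reglued to the bottom-$\alpha$ of $P_v$. Pulling this loop back through the cut-and-paste, it first crosses the $\alpha$-identification of $P_v$ and only afterwards reaches bottom-$\beta$. Therefore $\psi_e^{-1}$ of the branch $\beta$ is a train route in $\tau_v$ that passes exactly once over the branch $\alpha$ and exactly once over the branch $\beta$, yielding the off-diagonal $(\alpha, \beta)$ entry of $V_e$. The inductive step is then immediate from functoriality of incidence matrices: writing $\psi^{-1} = \psi_{e_1}^{-1} \circ \cdots \circ \psi_{e_k}^{-1}$ and iterating the single-move result gives the product $V_{e_1} \cdots V_{e_k}$.

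The hard part will be the geometric verification in the single-move case, and in particular ensuring that the chosen restoring isotopy places the pulled-back branch $\beta$ so that it traverses $\tau_v$ as an honest train route --- joining the branches $\alpha$ and $\beta$ smoothly at the central switch --- rather than as a closed curve with extra branch crossings. One also has to be confident that the isotopies made for the different edges $e_i$ of $\gamma$ can be chosen compatibly, so that the incidence counts genuinely multiply as one composes single moves. Once these geometric details are nailed down, all remaining steps are combinatorial bookkeeping.
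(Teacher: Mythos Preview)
Your proposal is correct and follows essentially the same route as the paper's proof: decompose $f_\gamma=\psi^{-1}\circ\phi$, record the relabeling $\phi$ by the permutation matrix $P$, and then check one inverse Rauzy--Veech move at a time that the loser branch $\beta$ pulls back to a train route over $\alpha$ and $\beta$ while all other branches are unchanged, giving the factor $V_{e_i}$ and concluding by induction. The only cosmetic difference is that you illustrate with a $t$-move whereas the paper illustrates with a $b$-move; the smoothness-at-the-switch and isotopy-compatibility issues you flag are exactly what the paper handles pictorially.
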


\begin{proof}
    Let $(\pi_t', \pi_b')$ be the endpoint of the path $\gamma.$ Recall that with the notation of Section \ref{construction} $f_\gamma = \psi^{-1} \circ \phi,$ where $\phi, \psi: X_\gamma \to X(\pi_t', \pi_b')$ are the change of labeling and cut and paste homeomorphism respectively.\\

    Hence, in order to determine $f_\gamma(\tau_\gamma),$ we need to apply $\phi$ first, followed by doing all the cut and paste moves backwards. $\phi$ maps $\tau_\gamma$ to a train track on $X(\pi_t', \pi_b')$ which when drawn on $P(\pi_t', \pi_b')$ looks exactly the same as $\tau_\gamma.$ However, the labels of the branches may have changed. This is exactly captured by the permutation matrix $P$ defined in Section \ref{construction}. Any branch $\alpha$ of $\tau_\gamma$ is mapped under $\phi$ to the branch $\phi(\alpha) =: \beta$ and the $\alpha$-column of $P$ has exactly one non-zero entry equal to $1$ in the $(\beta, \alpha)$ position.\\

    For the cut and paste moves, we analyze a single $b$-move (a $t$-move is analogous) and then argue by induction. So, assume we apply the inverse of a $b$-move with winner-loser pair $(\alpha, \beta).$ This corresponds to cutting the triangle consisting of the top sides $\alpha$ and $\beta$ and a straight line connecting them and gluing the $\alpha$-side of it to the bottom side labeled $\alpha,$ (compare Figure \ref{fig:train track effect}, where $(\alpha, \beta) = (B,C).$ In the middle polygon of the figure, we labeled the "new" $B$-edge). The effect on our train track is now easy to observe. The $\alpha$-branch will still be the $\alpha$-branch in the new polygon, while the $\beta$-branch can be isotoped to run exactly once over itself and once over the $\alpha$-branch. All other branches will remain the same. This is exactly captured by the matrix $V_{\alpha, \beta}.$ For any $\alpha' \neq \beta,$ the $\alpha'$-column consists of a single non-zero entry equal to $1$ in the $(\alpha', \alpha')$ position. The $\beta$-column has two non-zero entries equal to $1$ in the positions $(\beta, \beta)$ and $(\alpha, \beta).$\\

    \begin{figure}[h]
    \centering
    \begin{tikzpicture}
    
        \node[anchor=south west,inner sep=0] at (0,0){\includegraphics[scale=0.7]{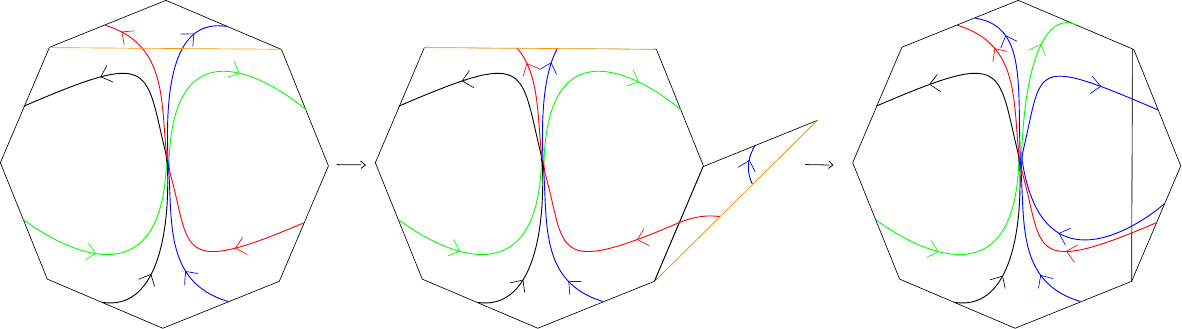}};
        \node at (0,2.8) {$A$};
        \node at (1.1, 3.9) {$B$};
        \node at (2.9, 3.9) {$C$};
        \node at (3.9,2.8) {$D$};
        \node at (0,1.2) {$D$};
        \node at (1.1, 0) {$A$};
        \node at (2.9, 0) {$C$};
        \node at (3.9,1.2) {$B$};

        \node at (6.4,3.6) {$\textcolor{orange}{B}$};
        \node at (8.9,1.4) {$\textcolor{orange}{B}$};

        \node at (10.1,2.8) {$A$};
        \node at (11.2, 3.9) {$B$};
        \node at (13, 3.9) {$D$};
        \node at (14,2.8) {$C$};
        \node at (10.1,1.2) {$D$};
        \node at (11.2, 0) {$A$};
        \node at (13, 0) {$C$};
        \node at (14,1.2) {$B$};
    \end{tikzpicture}
    
    \caption{The inverse of a $b$-move with its effect on the train track}
    \label{fig:train track effect}
\end{figure}

    Hence, we conclude inductively that after applying $\phi$ and each cut and paste move reversed, our newly obtained train track $f_\gamma(\tau_\gamma)$ is carried by $\tau_\gamma.$ Furthermore, if $e_1, ... , e_k$ are the edges of $\gamma,$ then the $\alpha$-column of the matrix $$V_{e_1} \cdot ... \cdot V_{e_k} \cdot P = V_\gamma$$ describes the image of the branch $f_\gamma(\alpha)$ for any $\alpha,$ i.e. $V_\gamma$ is the train track matrix.  
\end{proof}

The above lemma will be used in the proof of the lower bound in Theorem \ref{mytheorem}. For the proof of the upper bound, it will be more convenient to observe what happens to the sides of the polygon $P_\gamma.$ Note that these sides do not necessarily define closed curves in $X_\gamma.$ However, it will turn out that for the pseudo-Anosovs we will be interested in, the sides do define closed curves. In general, the sides only define paths on $X_\gamma,$ but a concatenation of these paths defines a closed curve and this in turn can be used to apply a similar method as the one we present in Section \ref{proof section} to compute upper bounds of the stable curve graph translation length in other examples.\\

Since $l_\mathcal{C}(f_\gamma) = l_\mathcal{C}(f_\gamma^{-1}),$ it does not matter if we use $f_\gamma$ or $f_\gamma^{-1}$ to compute the translation length. In fact, we want to study the action of $f_\gamma^{-1}$ on the sides of $P_\gamma$ instead of the one of $f_\gamma.$ We make this choice because $f_\gamma^{-1} = \phi^{-1} \circ \psi$ (compare the notation of Section \ref{construction}) and it is slightly easier to apply the cut and paste homeomorphism $\psi$ instead of applying its inverse. The reason we didn't look at $f_\gamma^{-1}$ to begin with is that the train track $\tau_\gamma$ is not carried by $f_\gamma^{-1}$ but only by $f_\gamma.$\\

Let $\alpha$ be a side of the polygon $P_\gamma.$ We think of $\alpha$ as a side of $P_\gamma$ as well as a path in $X_\gamma$ interchangeably as long as there is no ambiguity.

\begin{lemma}\label{lemma1}
    The path $f_\gamma^{-1}(\alpha)$ can be homotoped so that it lies only on the sides of the polygon $P_\gamma.$ Furthermore, the $(\alpha, \beta)$ entry of $V_\gamma$ counts exactly how often $f^{-1}_\gamma(\alpha)$ passes over $\beta.$ 
\end{lemma}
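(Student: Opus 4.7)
The plan is to exploit a duality between the sides of $P_\gamma$ and the branches of $\tau_\gamma$: by the construction of $\tau_\gamma$ in Section \ref{properties section}, each branch $b_\alpha$ leaves the central switch, crosses the bottom $\alpha$-side of $P_\gamma$, comes back across the top $\alpha$-side and returns to the switch. After the side identifications in $X_\gamma$, the two crossings become one single transverse crossing of $b_\alpha$ with the identified edge $c_\alpha$, and $b_\alpha$ does not meet any other edge $c_\beta$ in its interior. Hence $i(b_\alpha, c_\beta) = \delta_{\alpha\beta}$.

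First I would establish the homotopy statement. The 1-skeleton $\bigcup_\alpha c_\alpha$ of the polygonal decomposition of $X_\gamma$ has complement equal to the single open disk $\operatorname{int}(P_\gamma)$, so any path in $X_\gamma$ whose endpoints lie on this 1-skeleton can be homotoped rel endpoints to lie entirely on it. The endpoints of $c_\alpha$ are polygon vertices, which map under $f_\gamma^{-1}$ to polygon vertices (we pick a representative of $f_\gamma$ that permutes the finite singular set and preserves the polygon vertex set), so $f_\gamma^{-1}(c_\alpha)$ has endpoints on the 1-skeleton and can indeed be homotoped to a concatenation of sides of $P_\gamma$.

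Next I use homotopy-invariance of geometric intersection number:
$$i\bigl(b_\beta,\, f_\gamma^{-1}(c_\alpha)\bigr) \;=\; i\bigl(f_\gamma(b_\beta),\, c_\alpha\bigr).$$
For the right-hand side, Lemma \ref{train track lemma} says $f_\gamma(b_\beta)$ is a train route in $\tau_\gamma$ passing over branch $b_i$ exactly $(V_\gamma)_{i,\beta}$ times; by the duality, each passage over $b_\alpha$ contributes a single intersection with $c_\alpha$ and passages over $b_i$ with $i\neq\alpha$ contribute none, so the right-hand side equals $(V_\gamma)_{\alpha,\beta}$. For the left-hand side, after the homotopy $f_\gamma^{-1}(c_\alpha)$ is a concatenation of sides of $P_\gamma$; by the same duality each traversal of the side $c_\beta$ contributes exactly one intersection with $b_\beta$, so the left-hand side equals the number of times the homotoped path passes over $c_\beta$. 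Equating the two yields the claim.

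The main obstacle, and really the only substantive point, is the careful justification of the duality $i(b_\alpha, c_\beta) = \delta_{\alpha\beta}$ in $X_\gamma$; it requires one to be attentive to the fact that although a branch crosses the side $\alpha$ twice inside $P_\gamma$, these two intersection points get identified to one by the edge gluing, so the geometric intersection number in $X_\gamma$ is $1$, not $2$. A minor secondary point is ensuring that the endpoints of $f_\gamma^{-1}(c_\alpha)$ lie on the 1-skeleton, for which it suffices to fix a representative of the mapping class $f_\gamma$ that permutes the (finitely many) polygon vertices in $X_\gamma$; such a representative exists because $f_\gamma$ is built from cut-and-paste moves and a change of labelling, both of which naturally send vertices to vertices.
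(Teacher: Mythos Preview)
Your approach is genuinely different from the paper's. The paper argues by direct induction on the edges of $\gamma$: for a single Rauzy--Veech move it shows that the winner side, after the cut-and-paste, is homotopic to a concatenation of exactly two sides (encoded by the row of $V_{\alpha,\beta}$), while all other sides go to single sides; composing these observations gives an explicit edge-path representative whose passage counts are read off from the rows of the product $V_{e_1}\cdots V_{e_k}P$. No intersection theory is used.

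Your duality idea is attractive, but there is a real gap in the execution. You write
\[
i\bigl(b_\beta,\,f_\gamma^{-1}(c_\alpha)\bigr)\;=\;i\bigl(f_\gamma(b_\beta),\,c_\alpha\bigr)
\]
and then evaluate each side by counting crossings of a \emph{particular} representative (the carried train route on the right, the homotoped edge-path on the left). But counting crossings of a specific representative only gives an \emph{upper bound} for the geometric intersection number; it equals $i(\cdot,\cdot)$ only if the representative is in minimal position, which you never verify. Concretely, you obtain
\[
(\text{passage count of your edge-path over }c_\beta)\;\ge\;i\bigl(b_\beta,f_\gamma^{-1}(c_\alpha)\bigr)
\quad\text{and}\quad
(V_\gamma)_{\alpha,\beta}\;\ge\;i\bigl(f_\gamma(b_\beta),c_\alpha\bigr),
\]
which does not compare the two outer quantities. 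A second, related issue: you never specify \emph{which} homotopy to the $1$-skeleton you take; the complement is a disk, so an arc can be pushed to the boundary in two inequivalent ways, and different choices yield different edge-paths with different passage counts.

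These gaps can be repaired, but it takes work you have not done. On the right you would need to argue that $\tau_\gamma$ and the system $\{c_\alpha\}$ are efficiently transverse (no bigons), so that carried curves realize minimal intersection. On the left you must single out the \emph{reduced} edge-path and argue it is in minimal position with each $b_\beta$. Alternatively you could switch to algebraic intersection numbers, which are genuinely homotopy-invariant; but then you must separately show that the train route traverses each branch with a consistent orientation (this is true, and is exactly the remark the paper makes after the lemma) and that your edge-path does the same for the sides---the latter is not automatic and in fact is most naturally seen via the paper's inductive construction.
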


\begin{proof}
    Since $f_\gamma^{-1} = \phi^{-1} \circ \psi,$ we need to first apply the cut and paste moves to $P_\gamma$ and then the inverse of the change of labeling.\\
    
    Consider a single edge $e$ of $\gamma$ corresponding to a bottom cut and paste move with winner-loser pair $(\alpha, \beta).$ The case of a top move is analogous.\\

    Any side which does not correspond to winner side remains a side of the new polygon with the same label (compare Figure \ref{fig:cut and paste}). The side $\alpha$ gets mapped to a line connecting two corners of the new polygon, which can be homotoped to pass exactly once over the new side labeled $\alpha$ and the side labeled $\beta$ (compare Figure \ref{fig:homotopy of alpha}). This is captured by the rows of the matrix $V_{\alpha, \beta}.$ For any $\beta' \neq \alpha,$ the $\beta'$-row of $V_{\alpha, \beta}$ has a single non-zero entry equal to $1$ in the $(\beta', \beta')$ position, while the $\alpha$-row has two non-zero entries equal to $1$ in positions $(\alpha, \alpha)$ and $(\alpha, \beta).$\\

    \begin{figure}
        \centering
        \includegraphics[scale=0.7]{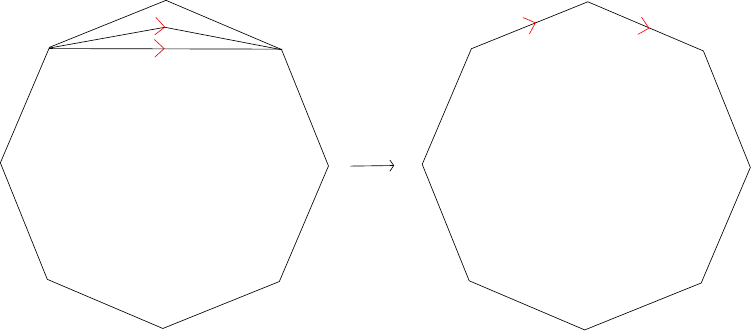}
        \caption{The homotopy of the side $\alpha$ after the cut and paste move}
        \label{fig:homotopy of alpha}
    \end{figure}

    Hence, inductively, we obtain that after applying all cut and paste moves, the image of the sides of $P_\gamma$ can be homotoped to lie on the sides of $P(\pi_t', \pi_b'),$ and letting $e_1, ... , e_k$ be the edges of $\gamma,$ the $\alpha$-row of the matrix $V_{e_1} \cdot ... \cdot V_{e_k}$ describes the image of $\alpha.$ Note that the order we multiply the matrices together is the same as the one we use in the proof of Lemma \ref{train track lemma} even though we apply $\psi$ instead of $\psi^{-1}.$ This is because here sides should be thought of as row vectors, so we are multiplying matrices from the right, while previously the branches of the train track had to be thought of as column vectors, so we multiplied matrices from the left.\\

    Finally, we need to apply $\phi^{-1}.$ This just permutes the sides of the polygon. The columns of the matrix $P$ encode the permutation according to $\phi.$ Hence, in order to encode $\phi^{-1}$ in terms of rows of the matrix, we need to multiply with $(P^{-1})^T.$ Since $P$ is a permutation matrix, its transpose is its inverse, so $(P^{-1})^T = P.$\\ 
    
    We conclude that $f_\gamma^{-1}$ maps sides of $P_\gamma$ to paths that can be homotoped to lie only on sides of $P_\gamma$ and that the rows of the matrix $V_{e_1} \cdot ... \cdot V_{e_k} \cdot P = V_\gamma$ encode how often each path $f_\gamma^{-1}(\alpha)$ passes over each side.   
\end{proof} 

Lemma \ref{lemma1} shows that the rows of the matrix $V_\gamma$ describe the image of the corresponding sides of $P_\gamma$ under $f_\gamma^{-1}.$ Note furthermore, that if we orient the sides as in Figure \ref{fig:2n-gon} then for any $\alpha,$ we have that $f_\gamma^{-1}(\alpha)$ traverses each side $\beta$ in the given orientation. This is because the gluing of the triangle for a Rauzy-Veech move preserves this orientation.

\begin{lemma}\label{lemma2}
    Let $\alpha$ be a side of $P_\gamma$ that is never a winner for any edge of $\gamma.$ Then the $\alpha$-row of $V_\gamma$ just has a single non-zero entry which is equal to $1.$
\end{lemma}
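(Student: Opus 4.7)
The plan is to give a direct matrix computation, tracking what right-multiplication by each factor in $V_\gamma = V_{e_1} \cdots V_{e_k} \cdot P$ does to a single row. The key observation is that for the elementary matrix $V_{\alpha_i,\beta_i} = Id + E_{\alpha_i,\beta_i}$, right multiplication of a row vector $r^T$ by $V_{\alpha_i,\beta_i}$ yields the row $r^T$ with its $\beta_i$-entry replaced by $r_{\beta_i} + r_{\alpha_i}$ and all other entries unchanged. In particular, if $r_{\alpha_i} = 0$, then $r^T$ is fixed by this multiplication.

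Next I would extract the $\alpha$-row of the product $V_{e_1} \cdots V_{e_k}$ inductively. The $\alpha$-row of the identity is $e_\alpha^T$. Since $\alpha$ is never a winner, the winner $\alpha_i$ of every edge $e_i$ satisfies $\alpha_i \neq \alpha$, so the $\alpha_i$-entry of $e_\alpha^T$ is zero. By the observation above, multiplying $e_\alpha^T$ by $V_{e_1}$ gives back $e_\alpha^T$, and iterating shows that the $\alpha$-row of $V_{e_1} \cdots V_{e_k}$ is still $e_\alpha^T$. Finally, because $P$ is a permutation matrix, $e_\alpha^T \cdot P$ is again a row with a single nonzero entry equal to $1$, which is precisely the $\alpha$-row of $V_\gamma$.

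There is no real obstacle here beyond bookkeeping: the lemma is essentially a restatement of the fact observed in the proof of Lemma \ref{lemma1} that the rows of $V_{\alpha_i,\beta_i}$ other than the $\alpha_i$-row coincide with the identity. One could alternatively argue geometrically via Lemma \ref{lemma1}: since $\alpha$ is never the winner of any cut-and-paste move, the side $\alpha$ of $P_\gamma$ is carried to a side of the polygon at each intermediate step (up to relabeling) without being stretched across other sides, so $f_\gamma^{-1}(\alpha)$ can be homotoped onto a single side, accounting for a single $1$ in the $\alpha$-row. Either formulation works, but the matrix computation is the cleanest and fits naturally after the proof of Lemma \ref{lemma1}.
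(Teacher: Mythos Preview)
Your argument is correct. The direct row computation works exactly as you describe: since right multiplication by $V_{\alpha_i,\beta_i}$ only alters the $\beta_i$-entry of a row vector by adding its $\alpha_i$-entry, and the $\alpha_i$-entry of $e_\alpha^T$ vanishes whenever $\alpha_i \neq \alpha$, the row $e_\alpha^T$ survives unchanged through $V_{e_1}\cdots V_{e_k}$ and is then sent to a standard basis vector by the permutation matrix $P$.

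The paper's own proof is precisely the geometric alternative you sketch at the end: it observes that a non-winner side is carried to a single side of the next polygon at every cut-and-paste step, and then invokes Lemma~\ref{lemma1} to read off the $\alpha$-row. So the two arguments are the algebraic and geometric sides of the same observation (already made inside the proof of Lemma~\ref{lemma1}), and your matrix version has the advantage of being self-contained without appealing back to the geometric interpretation.
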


\begin{proof}
    A cut and paste move corresponding to a Rauzy-Veech move maps any side of the starting polygon that is not a winner to some side of the resulting polygon. Hence, if $\alpha$ is never a winner, the image of $\alpha$ after applying all the edges of $\gamma$ will be a single side $\beta.$ According to Lemma \ref{lemma1}, the $\alpha$-row of $V_\gamma$ is consequently $0$ in every entry except for $(\alpha, \beta)$ where it is $1.$
\end{proof}

Lemma \ref{lemma2} tells us that if $\alpha$ is never a winner in any step of the path $\gamma,$ then the side corresponding to it in $P_\gamma,$ resp. the path in $X_\gamma,$ gets mapped under $f_\gamma^{-1}$ onto a single other side, resp. path. This will be crucial in the computation of the upper bound of the stable curve graph translation length in Section \ref{proof section}, since sides with this property are easy to control.

\subsection{The flip move}

In order to define the pseudo-Anosovs $f_g$ below, apart from Rauzy-Veech moves, we will need to consider the so called \textit{flip move} $f$. Similarly to the Rauzy-Veech moves, given a labeled permutation, the flip move produces a new labeled permutation in the following way:

$$\begin{pmatrix}
    \pi_t^{-1}(1) & \pi_t^{-1}(2) & ... & \pi_t^{-1}(n)\\
    \pi_b^{-1}(1) & \pi_b^{-1}(2) & ... & \pi_b^{-1}(n)\\
\end{pmatrix} 
\xlongrightarrow{f} 
\begin{pmatrix}
    \pi_b^{-1}(n) & \pi_b^{-1}(n-1) & ... & \pi_b^{-1}(1)\\
    \pi_t^{-1}(n) & \pi_t^{-1}(n-1) & ... & \pi_t^{-1}(1)\\
\end{pmatrix}.$$

For example, we have 

$$\begin{pmatrix}
    A & C & B\\
    B & A & C\\
\end{pmatrix}
\xlongrightarrow{f} 
\begin{pmatrix}
    C & A & B\\
    B & C & A\\
\end{pmatrix}.$$

We can also give a geometric meaning to the flip move. Let $(\pi_t, \pi_b)$ be a labeled permutation and $P = P(\pi_t, \pi_b)$ the corresponding polygon. We define the flip move on $P$ as rotation of $P$ by $180$ degrees. This defines a polygon $P'$ which yields a new labeled permutation $(\pi'_t, \pi'_b)$ that is exactly the one obtained by the flip move on $(\pi_t, \pi_b).$\\

We define the \textit{labeled augmented Rauzy diagram} to be the labeled Rauzy diagram with extra edges corresponding to the flip move. We label these extra edges by $f.$ Note that the labeled augmented Rauzy diagram might have fewer connected components than the labeled Rauzy diagram. As before, given a path $\gamma$ in the labeled augmented Rauzy diagram, we say that $\gamma$ is allowed if starting point and endpoint define the same unlabeled permutation, and in an analogous way we obtain a corresponding mapping class $f_\gamma$ of $X_\gamma.$\\

We again assign a matrix $V_\gamma$ to $\gamma$ in the same way as before, where when we pass through an edge corresponding to a flip move we don't change our matrix. So, $V_\gamma$ is again a non-negative integer matrix.

\begin{example}
    Let $$(\pi_t, \pi_b) = \begin{pmatrix}
        A & B & C\\
        C & A & B
    \end{pmatrix}$$ and consider the path $\gamma = f$ in the labeled augmented Rauzy diagram starting at $(\pi_t, \pi_b)$ and consisting of a single flip move. Since
    $$\begin{pmatrix}
        A & B & C\\
        C & A & B
    \end{pmatrix} 
    \xlongrightarrow{f} 
    \begin{pmatrix}
    B & A & C\\
    C & B & A\\
    \end{pmatrix},$$ we see that starting and ending permutation of $\gamma$ define the same unlabeled permutation, namely $(1 \ 2 \ 3).$ Hence, we obtain a mapping class $f_\gamma$ and a corresponding matrix $V_\gamma.$ In this case, $V_\gamma$ is equal to the permutation matrix, since the flip move doesn't contribute anything by definition, i.e.
    $$V_\gamma = P = \begin{pmatrix}
        0 & 1 & 0\\
    1 & 0 & 0\\
    0 & 0 & 1
    \end{pmatrix}.$$
    The mapping class $f_\gamma$ is an involution of the torus, since in terms of polygons it corresponds to a 180 degree rotation of a hexagon with sides glued as imposed by the permutations.
\end{example}

In the above example, the matrix $V_\gamma$ was of order $2,$ so in particular not primitive. Here, we show:

\begin{lemma}\label{flip move lemma}
    Let $\gamma$ be a path in the labeled augmented Rauzy diagram. Let $f_\gamma$ and $V_\gamma$ be the corresponding mapping class and matrix. If $V_\gamma$ is a primitive matrix, then $f_\gamma$ is pseudo-Anosov.
\end{lemma}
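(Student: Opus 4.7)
The plan is to extend Lemma \ref{train track lemma} so that the invariant train track construction survives the insertion of flip moves, and then invoke the standard criterion that an invariant, large, recurrent train track with primitive transition matrix forces a mapping class to be pseudo-Anosov (\cite{primer}, Chapter 15.2, following Veech).

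First, I would reprove the invariance statement for paths in the labeled augmented Rauzy diagram. Along each Rauzy-Veech edge of $\gamma$, the image of $\tau_\gamma$ is carried by the train track on the intermediate polygon with carrying matrix $V_{\alpha,\beta}$, exactly as in Lemma \ref{train track lemma}. For a flip edge, the cut-and-paste homeomorphism is the $180$ degree rotation of the polygon; this sends the central switch of $\tau_\gamma$ to itself and permutes its branches bijectively, so its contribution to branch-carrying is a permutation matrix rather than the identity. The point is that this permutation is already absorbed into the terminal change-of-labeling matrix $P$: since $\phi$ compares the labels on the starting and ending polygons, it records the cumulative effect of every flip move performed along $\gamma$. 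Thus assigning $V_{e_i}=I$ to flip edges together with the global $P$ still yields the correct total carrying matrix $V_{e_1}\cdots V_{e_k}\cdot P=V_\gamma$ for the carrying relation $f_\gamma(\tau_\gamma)\prec \tau_\gamma$.

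Second, I would check that $\tau_\gamma$ satisfies the hypotheses needed to conclude pseudo-Anosovness. Largeness is immediate: the complement $X_\gamma\setminus \tau_\gamma$ consists of polygonal regions obtained by splitting $P_\gamma$ along the central switch and then identifying sides. Recurrence follows from the primitivity of $V_\gamma$: for each branch $\alpha$ some power $V_\gamma^k$ is strictly positive, so the carried curve $f_\gamma^k(\alpha)$ traverses every branch, which exhibits each branch on a closed train route.

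Third, with $\tau_\gamma$ large, recurrent, and invariant under $f_\gamma$ with primitive carrying matrix $V_\gamma$, I would apply the standard train track criterion (\cite{primer}, Chapter 15.2) to conclude that $f_\gamma$ is pseudo-Anosov: the Perron-Frobenius eigenvalue of $V_\gamma$ is its stretch factor, and the left and right Perron eigenvectors induce the transverse measures on the stable and unstable foliations. The delicate step, and the only place where the proof genuinely differs from the pure Rauzy-Veech case, is the bookkeeping in the first paragraph; once one checks that the rotation permutation of a flip edge is correctly inherited by the global change-of-labeling $P$, everything else reduces to the already established theory.
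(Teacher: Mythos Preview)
Your handling of the flip edges contains a genuine gap. The claim that the $180^\circ$ rotation ``permutes the branches bijectively'' with a non-trivial permutation, and that this permutation is then ``absorbed into the terminal change-of-labeling matrix $P$,'' conflates the two maps $\phi$ and $\psi$. The matrix $P$ is defined solely from $\phi$, which is the identity on the underlying polygons; the flip rotations live entirely in $\psi$ and are not recorded by $P$ at all. What actually happens is simpler: since the branch labeled $\alpha$ joins the top side labeled $\alpha$ to the bottom side labeled $\alpha$ through the central switch, and the flip interchanges these two sides, the rotation sends each labeled branch to itself (reversing orientation). Hence the flip contributes the identity matrix directly, not a permutation that must be cancelled elsewhere. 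You need to state and verify this fact; as written, your bookkeeping does not establish that $V_\gamma$ is the carrying matrix.

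The paper takes a different route to avoid this delicate point: it uses the commutation relations $ft^R=b^Lf$ and $fb^R=t^Lf$ to push all flip moves to the end of $\psi$, at the cost of introducing ``left-side'' cut-and-paste moves $t^L,b^L$. One then checks separately that the inverse of a left move acts on the train track by the same matrix $V_{\alpha,\beta}$ as the corresponding right move, and that the (at most one) remaining flip at the end fixes each branch. This makes the invariance and the identification of the transition matrix transparent. Finally, note that the criterion the paper invokes (Penner, \cite{P1}, Corollary 3.2) requires the train track to be \emph{birecurrent}; you check recurrence but not transverse recurrence, which the paper verifies by exhibiting, for each branch, a simple closed curve built from polygon sides that meets it efficiently.
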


\begin{proof} 
    Assume that $\gamma$ consists of $n$ edges out of which $1 \le k \le n$ are flip moves. The main observation for the proof is that geometrically a flip move followed by a Rauzy-Veech move is the same as an analogous Rauzy-Veech move on the left side of the polygon followed by a flip move. More precisely, if we denote cut and paste moves (as defined in Section \ref{top RV}) by $t^R, \, b^R$ and the analogously defined cut and paste moves on the left side of the polygon by $t^L, \, b^L,$ we have
    $$ft^R = b^Lf, \text{ and } fb^R = t^Lf.$$

    These equations should be read as saying: Applying a top (resp. bottom) move on the right followed by a flip move is the same as first applying a flip move followed by a bottom (resp. top) move on the left.\\

    Hence, we can write our cut and paste homeomorphism $\psi$ corresponding to $f_\gamma$ as a composition of cutting and pasting on the right and on the left, followed by $k$ flip moves. Since the flip moves are of order two, depending on the parity of $k,$ we are either left with one or with zero flip moves.\\

    Now, analogously to the proof of Lemma \ref{train track lemma}, we have to check that the train track $\tau_\gamma$ is carried by $f_\gamma$ and that the train track matrix is given by $V_\gamma.$ If we only have to do cut and paste moves on the right, we already know the above from Lemma \ref{train track lemma}. Assume we have to do a cut and paste move on the left. We look at the case where we have to do a top move $t^L,$ (the case $b^L$ is analogous). This means that our path in the labeled augmented Rauzy diagram had an edge corresponding to a $b^R$ move which we substituted by $f t^L f = f t^L f^{-1} = b^R.$ Hence, we have to check that a $t^L$ move with winner-loser pair $(\alpha, \beta)$ has the same effect as a $b^R$ move with winner-loser pair $(\alpha, \beta).$ The action on the train track by the latter is given by the matrix $V_{\alpha, \beta}.$\\ 
    
    Performing a top move on the left with winner-loser pair $(\alpha, \beta)$ means that we cut the triangle on the left and glue its $\alpha$-side to the corresponding $\alpha$-side on the bottom (compare Figure \ref{fig:left top move}, which is the flipped version of Figure \ref{fig:cut and paste}).\\
    
     \begin{figure}
        \centering
        \includegraphics[scale=0.7]{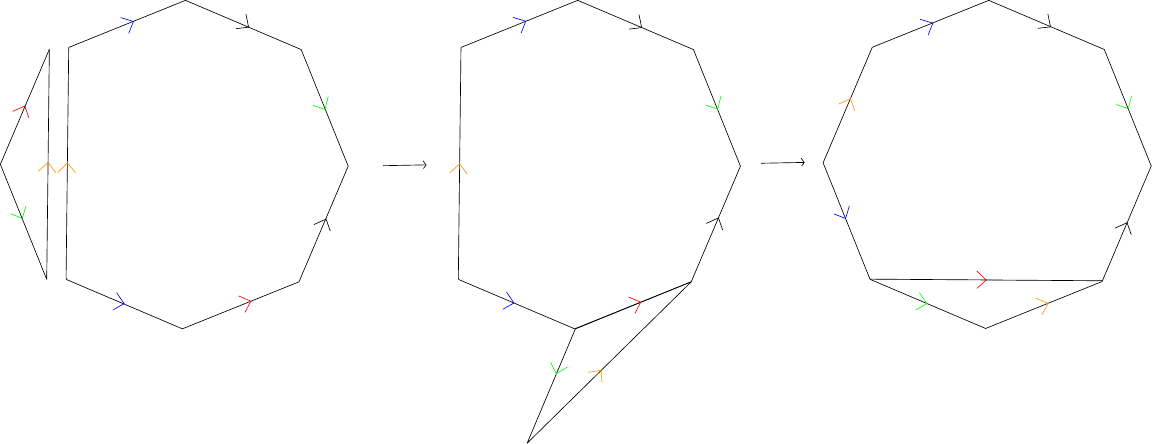}
        \caption{A top move on the left}
        \label{fig:left top move}
    \end{figure}
    
    In order to see the effect on the train track, we have to apply the inverse of this move and observe that every branch of the train track remains the same, except for the $\beta$-branch that now runs exactly once over itself and once over the $\alpha$-branch (compare Figure \ref{fig:train track effect for left move}, the flipped version of Figure \ref{fig:train track effect}, where $(\alpha, \beta) = (B,C)$). This is captured by the matrix $V_{\alpha, \beta}$ which is what we wanted to show.\\

    \begin{figure}[h]
    \centering
    \begin{tikzpicture}
    
        \node[anchor=south west,inner sep=0] at (0,0){\includegraphics[scale=0.7]{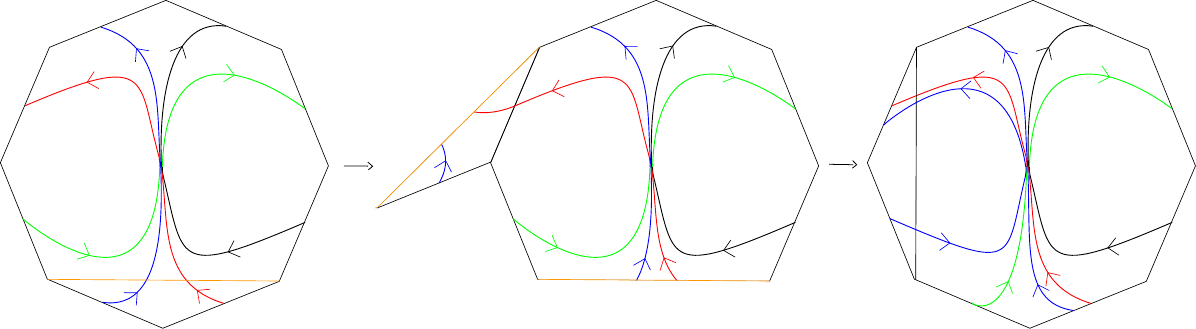}};
        \node at (0,2.8) {$B$};
        \node at (1.1, 3.8) {$C$};
        \node at (2.9, 3.8) {$A$};
        \node at (3.9,2.8) {$D$};
        \node at (0,1.2) {$D$};
        \node at (1.1, 0.1) {$C$};
        \node at (2.9, 0.1) {$B$};
        \node at (3.9,1.2) {$A$};

        \node at (5.25,2.6) {$\textcolor{orange}{B}$};
        \node at (7.75,0.35) {$\textcolor{orange}{B}$};

        \node at (10.3,2.8) {$B$};
        \node at (11.4, 3.8) {$C$};
        \node at (13.2, 3.8) {$A$};
        \node at (14.1,2.8) {$D$};
        \node at (10.3,1.2) {$C$};
        \node at (11.4, 0.1) {$D$};
        \node at (13.2, 0.1) {$B$};
        \node at (14.1,1.2) {$A$};
    \end{tikzpicture}
    
    \caption{The inverse of a left $t$-move with its effect on the train track}
    \label{fig:train track effect for left move}
\end{figure}

    Note, that if after performing all left and right cut and paste moves, $\psi$ does consist of a flip move, this only changes the orientation of the train track. The $\alpha$-branch of the train track will still be the $\alpha$-branch after the flip for every $\alpha$. Hence, the train track $\tau_\gamma$ is carried by $f_\gamma$ and the train track matrix is given by $V_\gamma.$\\

    It is a standard fact that if a large, birecurrent train track is carried by a mapping class and the corresponding train track matrix is primitive, then the mapping class is pseudo-Anosov (\cite{P1}, Corollary 3.2). The train track $\tau_\gamma$ is large, since its complement is a $(2n-2)$-gon. Furthermore, $\tau_\gamma$ is recurrent since every branch itself is a closed train route. Finally, each branch of $\tau_\gamma$ is intersected efficiently by a simple closed curve that can be built as a concatenation of sides of the corresponding polygon $P_\gamma.$ This concludes the proof.
\end{proof}
 
\section{Translation lengths in hyperelliptic components}\label{proof section}

In this section, we prove Theorems \ref{mytheorem} and \ref{bettertheorem}. We start by defining the pseudo-Anosov maps $f_g,$ which is due to Boissy--Lanneau. We are not going to prove that the $f_g$ are indeed in a hyperelliptic component, but instead refer the reader to \cite{BL1} or \cite{BL}. When defining the $f_g,$ we use a different labeling of the permutations Boissy--Lanneau use. We then proceed with proving Theorem \ref{mytheorem}. We split the proof into the proof for the upper bound and the lower bound of $l_\mathcal{C}(f_g)$ since the proofs use different techniques. Finally, we prove Theorem \ref{bettertheorem}.

\subsection{The pseudo-Anosovs $f_g$}

First, we recall the construction of the $f_g$ which can be found in \cite{BL1} as well as \cite{BL}.\\ 

Consider the labeled permutation

$$(\pi_t^g, \pi_b^g) = \begin{pmatrix}
    \alpha_1 & \alpha_2 & \alpha_3 & ... & \alpha_g & \alpha_{g+1} & \alpha_{g+2} & ... & \alpha_{2g}\\
    \alpha_{2g} & \alpha_{g-1} & \alpha_{g-2} & ... & \alpha_1 & \alpha_{2g-1} & \alpha_{2g-2} & ... & \alpha_g\\
\end{pmatrix}$$

and the path $\gamma_g = ftb^g$ in the labeled augmented Rauzy diagram starting at $(\pi_t^g, \pi_b^g),$ i.e. $\gamma_g$ is the path obtained by first applying $g$ times the type $b$ move, followed by once the type $t$ move and finally once the flip move $f$. Note that we write $ftb^g,$ i.e. use the convention to read from right to left. Furthermore, we write $x \cdot (\pi_t, \pi_b)$ to denote the endpoint of any path $x$ in the labeled augmented Rauzy-diagram starting at a labeled permutation $(\pi_t, \pi_b).$\\

After applying $b$ a number of $k$ times ($k \in \{1, ... , g\}$) to our starting permutation, we end up at the permutation

$$b^k \cdot (\pi_t^g, \pi_b^g) = \begin{pmatrix}
    \alpha_1 & \alpha_2 & ... & \alpha_g & \alpha_{2g-k+1} & ... & \alpha_{2g} & \alpha_{g+1} & ... & \alpha_{2g-k}\\
    \alpha_{2g} & \alpha_{g-1} & ... & \alpha_1 & \alpha_{2g-1} & ... & \alpha_{2g-k} & \alpha_{2g-k-1} & ... & \alpha_g\\
\end{pmatrix}.$$

Whenever we write "..." in the above permutation, we mean to continue the sequence by adding or subtracting one to the subscript as appropriate. We observe that after $g$ moves of type $b$ we obtain $b^g \cdot (\pi_t^g, \pi_b^g) = (\pi_t^g, \pi_b^g).$ Hence, we traced out a loop in the labeled augmented Rauzy diagram and can therefore already obtain a mapping class corresponding to this loop. However, this mapping class wouldn't be a pseudo-Anosov. We continue the path by applying $t.$\\

From the above, we obtain

$$tb^g \cdot (\pi_t^g, \pi_b^g) = t \cdot (\pi_t^g, \pi_b^g) = \begin{pmatrix}
    \alpha_1 & \alpha_2 & ... & \alpha_{g+1} & \alpha_{g+2} & ... & \alpha_{2g}\\
    \alpha_{2g} & \alpha_g & ... & \alpha_1 & \alpha_{2g-1} & ... & \alpha_{g+1}\\
\end{pmatrix}.$$

Finally, we apply $f$ to obtain

$$\gamma_g \cdot (\pi_t^g, \pi_b^g) = f \cdot \begin{pmatrix}
    \alpha_1 & \alpha_2 & ... & \alpha_{g+1} & \alpha_{g+2} & ... & \alpha_{2g}\\
    \alpha_{2g} & \alpha_g & ... & \alpha_1 & \alpha_{2g-1} & ... & \alpha_{g+1}\\
\end{pmatrix} = $$
$$ = \begin{pmatrix}
    \alpha_{g+1} & ... & \alpha_{2g-1} & \alpha_1 & ... & \alpha_g & \alpha_{2g}\\
    \alpha_{2g} & ... & \alpha_{g+2} & \alpha_{g+1} & ... & \alpha_2 & \alpha_1\\
\end{pmatrix}.$$\\[-0.5cm]

Comparing starting point and endpoint of $\gamma_g,$ we see that the two labeled permutations define the same unlabeled permutation, and therefore $\gamma_g$ defines a mapping class $f_g$ on the surface $X_g := X(\pi_t^g, \pi_b^g).$ One easily checks by induction that the surface $X_g$ is indeed a surface of genus $g.$\\

Boissy--Lanneau show that $f_g$ is pseudo-Anosov and that its stretch factor is bounded below by $\sqrt{2}$ for all $g$ (\cite{BL1}). This is done by computing the matrix $V_{\gamma_g},$ showing that it is primitive and computing its spectral radius. Since Boissy--Lanneau use a different labeling, the matrix they obtain differs from the one we compute below by conjugation by a permutation matrix. 

\subsection{Proof of Theorem \ref{mytheorem}}

Our main goal is to prove the following:

\begin{theorem*}
    For all $g \ge 2,$ it holds that
    $$\frac{1}{16g-12} \, \le \, l_C(f_g) \, \le \, \frac{1}{g-1}.$$
\end{theorem*}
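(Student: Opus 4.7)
Both bounds exploit the invariant train track $\tau_g$ for $f_g$ supplied by Lemma \ref{train track lemma} together with the transition matrix $V_{\gamma_g}$. The columns of $V_{\gamma_g}$ describe how $f_g(\tau_g)$ is carried by $\tau_g$, while the rows encode the action of $f_g^{-1}$ on the sides of $P_g$ by Lemma \ref{lemma1}. The key combinatorial observation about $\gamma_g = ftb^g$ is that only two letters ever serve as a winner of a Rauzy--Veech move: $\alpha_g$ wins every one of the $g$ consecutive $b$-moves, and $\alpha_{2g}$ wins the single $t$-move. Hence the $2g-2$ letters $\alpha_1,\dots,\alpha_{g-1},\alpha_{g+1},\dots,\alpha_{2g-1}$ are ``never-winners.''

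\textbf{Upper bound.} By Lemma \ref{lemma2}, the $2g-2$ rows of $V_{\gamma_g}$ corresponding to never-winners each contain a single nonzero entry, i.e.\ $f_g^{-1}$ sends each such side of $P_g$ to a single other side. I would use this to exhibit an explicit essential simple closed curve $c$ on $X_g$ built by concatenating a suitable subset of the $g-1$ never-winner sides $\alpha_1,\dots,\alpha_{g-1}$ in the left half of the polygon, and then verify via the explicit rows of $V_{\gamma_g} = V_{e_1}\cdots V_{e_{g+2}}\cdot P$ that $f_g^{-1}(c)$ differs from $c$ only by a ``shift'' within this block. This gives $d_\mathcal{C}(c,f_g^{-1}(c)) \le 1$, and iterating along the block before the orbit ``wraps around'' into the two winner sides produces a curve-graph path of length $n$ from $c$ to $f_g^{-n(g-1)}(c)$, yielding $l_\mathcal{C}(f_g)\le 1/(g-1)$.

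\textbf{Lower bound.} Here I would adapt Gadre's method (\cite{Vaibhav}) based on the Nesting Lemma. Pick a simple closed curve $\alpha$ carried by $\tau_g$ with strictly positive weights, so $\alpha \in \text{int}(PE(\tau_g))$. Given any $\beta\in\mathcal{C}(X_g)$ with $d_\mathcal{C}(\alpha,\beta)\le k$, iterating the Nesting Lemma along a shortest curve-graph path from $\alpha$ to $\beta$ forces $\beta$ to be carried by a diagonal extension of $\tau_g$ whose combinatorial complexity grows in a controlled way at each step. Since $\tau_g$ has $2g$ branches sitting inside a complementary region with roughly $4g$ corners, a careful count of how diagonals can be added produces a per-step growth bound of $16g-12$. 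Applying this to $\beta = f_g^n(\alpha)$ and comparing with the Perron--Frobenius growth of the weights of $\alpha$ on $\tau_g$ (scaled by $\lambda(f_g)^n$ under $f_g^n$) forces $n\le(16g-12)k$ up to a controlled additive term, so $l_\mathcal{C}(f_g) \ge 1/(16g-12)$.

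\textbf{Main obstacles.} The delicate step for the upper bound is verifying that the candidate concatenation of never-winner sides is indeed an essential simple closed curve on $X_g$, and that the iterated action of $f_g^{-1}$ preserves the disjointness pattern over a full interval of $g-1$ applications before it begins to interact with the winner sides $\alpha_g$ and $\alpha_{2g}$. For the lower bound, the main technical challenge is the quantitative form of the iterated Nesting Lemma: one must translate each unit of curve-graph distance into a precise combinatorial enlargement of the admissible diagonal extension of $\tau_g$, and it is this bookkeeping that pins the constant down to $16g-12$ rather than a looser polynomial in $g$.
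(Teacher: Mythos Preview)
Your upper bound sketch has a genuine gap. You propose a curve $c$ with $d_\mathcal{C}(c,f_g^{-1}(c))\le 1$ and then claim this yields a path of length $n$ from $c$ to $f_g^{-n(g-1)}(c)$; but if each application of $f_g^{-1}$ advances at most one step in $\mathcal{C}$, the conclusion is only $l_\mathcal{C}(f_g)\le 1$, not $1/(g-1)$. The arithmetic does not close as written. The paper's argument is different and avoids concatenation altogether. One first checks that all $4g$ corners of $P_g$ are identified to a single point on $X_g$, so every side $\alpha_i$ is \emph{itself} an essential simple closed curve. Then one follows the orbit of a single side under $f_g^{-1}$: from the relabeling permutation one reads off $f_g^{-1}(\alpha_{2g-1})=\alpha_{g-1}$, $f_g^{-2}(\alpha_{2g-1})=\alpha_{2g-2}$, and so on, the orbit running through all $2g-2$ never-winner sides (not $g-1$ of them) until $f_g^{-(2g-2)}(\alpha_{2g-1})=\alpha_g$. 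Since $\alpha_{2g-1}$ and $\alpha_g$ meet only at the cone point, their geometric intersection number is at most $1$, so $d_\mathcal{C}(\alpha_{2g-1},\alpha_g)\le 2$, and $l_\mathcal{C}(f_g)\le 2/(2g-2)=1/(g-1)$.

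Your lower bound is in the right spirit but the mechanism you describe --- matching Perron--Frobenius weight growth against a ``per-step diagonal count'' --- is not how the constant $16g-12$ arises, and it is not clear such a comparison can be made sharp. The paper's argument is purely combinatorial: the explicit matrix $V_{\gamma_g}$ has a positive $(2g,2g)$ diagonal entry, so by Tsai's proposition its $(4g)$-th power is strictly positive; Gadre's Lemma~5.2 then upgrades this to diagonal extensions at a cost of $6(2g-2)$ further iterates (reflecting the single complementary $(4g-2)$-gon), so that with $K=6(2g-2)+4g=16g-12$ one has $f_g^K(PE(\tau_g))\subset\mathrm{int}(PE(\tau_g))$. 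The Nesting Lemma now gives the nested chain $PE(f_g^{nK}\tau_g)\subset\mathrm{int}(PE(f_g^{(n-1)K}\tau_g))$, each inclusion worth one unit of curve-graph distance, and $l_\mathcal{C}(f_g)\ge 1/K$ follows. No weight estimates are used.
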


The remaining part of this section is devoted to proving the theorem. Since $l_\mathcal{C}(f_g) = l_\mathcal{C}(f_g^{-1}),$ we can use either $f_g$ or its inverse for the proof. We use $f_g^{-1}$ for the proof of the upper bound and $f_g$ for the proof of the lower bound.\\

The first thing to note is that on the surface $X(\pi_t^g, \pi_b^g),$ all polygon sides $\alpha_1, ... , \alpha_{2g}$ are closed curves. This follows from the equivalent statement in the following lemma.

\begin{lemma}
    All the corners of the polygon $P(\pi_t^g, \pi_b^g)$ define the same point in $X(\pi_t^g, \pi_b^g).$
\end{lemma}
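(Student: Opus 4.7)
Label the $4g$ corners of the polygon $P(\pi^g_t, \pi^g_b)$ as $V_0, V_1, \ldots, V_{4g-1}$ going clockwise starting from the top-left, so that the $i$-th top side (left to right) is $V_{i-1}V_i$ and the $j$-th bottom side (left to right) is $V_{4g-j+1}V_{4g-j}$ with indices taken modulo $4g$. Each corner lies on exactly two sides of the polygon and each side is glued to exactly one other, so the gluing identifies each corner with exactly two other corners. The resulting $2$-regular graph on the $4g$ corners therefore decomposes into disjoint cycles, and these cycles are precisely the equivalence classes of corners in $X_g$. The lemma is then equivalent to the assertion that this graph is a single cycle of length $4g$.

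The plan is to write down all the identifications explicitly and then trace the cycle through $V_0$. Gluing the side labeled $\alpha$ by translation identifies $V_{\pi^g_t(\alpha)-1}$ with $V_{4g-\pi^g_b(\alpha)+1}$ and $V_{\pi^g_t(\alpha)}$ with $V_{4g-\pi^g_b(\alpha)}$. Plugging in the explicit formula for $(\pi^g_t, \pi^g_b)$ groups the identifications into four natural families, according to whether $\alpha_k$ has $1 \le k \le g-1$ or $g \le k \le 2g-1$, together with two "boundary" identifications coming from $\alpha_{2g}$, namely $V_{2g-1} \sim V_0$ and $V_{2g} \sim V_{4g-1}$. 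Starting at $V_0$ and at each step taking the identification that does not return to the previous vertex, I claim the orbit visits all $4g$ corners before closing up; this is verified by a direct, if slightly case-dependent, computation.

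The main obstacle is the combinatorial bookkeeping at the boundary indices $g-1, g, 2g-1, 2g, 3g-1, 3g, 4g-1$, where the form of the identification rule changes, and in particular the precise order in which the four blocks $\{V_0,\ldots,V_{g-1}\}$, $\{V_g,\ldots,V_{2g-1}\}$, $\{V_{2g},\ldots,V_{3g-1}\}$, $\{V_{3g},\ldots,V_{4g-1}\}$ are visited depends mildly on the parity of $g$. A more conceptual route that I would use as a sanity check is the Euler characteristic identity applied to the CW decomposition of $X_g$ with one face and $2g$ edges (one per label): if $V$ denotes the number of equivalence classes of vertices, then $V - 2g + 1 = \chi(X_g) = 2 - 2g$, so knowing independently that $X_g$ has genus $g$ (what the paper calls an easy induction) immediately forces $V = 1$.
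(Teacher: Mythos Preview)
Your main argument---tracing the cycle of corner identifications---is exactly what the paper does, just dressed up with explicit vertex labels $V_0,\ldots,V_{4g-1}$; the paper instead names corners by ``left/right of top/bottom side $\alpha_i$'' and walks through the same chain (odd-indexed top-left corners first, then even-indexed).

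One caution about your Euler-characteristic ``sanity check'': in this paper the claim that $X_g$ has genus $g$ is not established independently of the vertex count. The only way offered to compute the genus of a surface built from a $4g$-gon with $2g$ edge-pairs and one face is via $V - 2g + 1 = 2 - 2g$, which already presupposes $V=1$. (Equivalently, saying the surface lies in $\mathcal{H}(2g-2)$ is the statement that there is a single cone point, i.e.\ a single vertex class.) So the Euler-characteristic route is circular here and cannot replace the direct trace; keep it only as a consistency check once the cycle computation is done.
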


\begin{proof}
    Consider the corner on the left of the top side corresponding to $\alpha_1.$ Since in $X(\pi_t^g, \pi_b^g)$ top and bottom side corresponding to $\alpha_1$ are identified, the above corner defines the same point as the left corner of the bottom side corresponding to $\alpha_1.$ This in turn is the same corner as the right corner of the bottom side corresponding to $\alpha_2.$ From the gluing, we move to the right corner of the top side corresponding to $\alpha_2,$ which is the same as the left corner of the top side corresponding to $\alpha_3.$ Iterating this process shows that all the left corners of the top sides corresponding to $\alpha_i$ with $i$ odd define the same point in $X(\pi_t^g, \pi_b^g).$\\
    
    After reaching the left corner of the top side corresponding to $2g-1$ and gluing to the bottom, we end up at the right corner of the bottom side corresponding to $\alpha_1,$ which is the same as the left corner of the top side corresponding to $\alpha_2.$ From there, iterating again shows that we also identify all the left corners of the top sides of the $\alpha_i$ with $i$ even. Hence, all the corners get identified in $X(\pi_t^g, \pi_b^g).$
\end{proof}

Since the $\alpha_i$ are closed curves, we can use them to find bounds on the stable curve graph translation length of $f_g^{-1}.$ The ones that are easier to control are the ones that, seen as sides of the polygon, get mapped to another side of the polygon.

\begin{lemma}
    The only two curves that are winners for some edge of $\gamma_g$ are $\alpha_g$ and $\alpha_{2g}.$
\end{lemma}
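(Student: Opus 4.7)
The plan is to simply identify the winner at every edge of the path $\gamma_g = ftb^g$ and observe that only $\alpha_g$ and $\alpha_{2g}$ appear. Recall from Section \ref{RV induction} that for a $t$-move the winner is $\pi_t^{-1}(n)$ (the rightmost entry of the top row) and for a $b$-move it is $\pi_b^{-1}(n)$ (the rightmost entry of the bottom row), while the flip move $f$ has no winner at all. So I only need to read off the rightmost top and bottom entries at each intermediate permutation along $\gamma_g$.

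First I would handle the $g$ consecutive $b$-moves. The explicit formula
$$b^k \cdot (\pi_t^g, \pi_b^g) = \begin{pmatrix}
    \alpha_1 & \alpha_2 & ... & \alpha_g & \alpha_{2g-k+1} & ... & \alpha_{2g} & \alpha_{g+1} & ... & \alpha_{2g-k}\\
    \alpha_{2g} & \alpha_{g-1} & ... & \alpha_1 & \alpha_{2g-1} & ... & \alpha_{2g-k} & \alpha_{2g-k-1} & ... & \alpha_g\\
\end{pmatrix}$$
stated in the excerpt already tells us exactly what I need: for every $k \in \{0,1,\dots,g-1\}$ the rightmost bottom entry of $b^k \cdot (\pi_t^g, \pi_b^g)$ is $\alpha_g$. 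Therefore the $(k+1)$-th $b$-move in $\gamma_g$ has winner $\alpha_g$, and this is the only winner contributed by the block $b^g$.

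Next I would handle the $t$-move. Since $b^g \cdot (\pi_t^g, \pi_b^g) = (\pi_t^g, \pi_b^g)$, the $t$-move is applied to the original permutation, whose rightmost top entry is $\alpha_{2g}$. Hence the $t$-edge contributes the winner $\alpha_{2g}$. Finally, the terminal edge of $\gamma_g$ is a flip $f$, which contributes no winner. Combining the three cases, the set of winners occurring along $\gamma_g$ is exactly $\{\alpha_g, \alpha_{2g}\}$, which is the claim. There is no real obstacle here — the argument is entirely combinatorial bookkeeping using the explicit description of $b^k \cdot (\pi_t^g, \pi_b^g)$ given just above the lemma.
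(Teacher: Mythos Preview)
Your argument is correct and essentially identical to the paper's proof: both simply track the rightmost top/bottom entries along $\gamma_g$, using that $b$-moves leave $\pi_b$ unchanged (so the winner stays $\alpha_g$) and that $b^g\cdot(\pi_t^g,\pi_b^g)=(\pi_t^g,\pi_b^g)$ so the $t$-move has winner $\alpha_{2g}$. The only cosmetic difference is that you read off the bottom-right entry from the explicit formula for $b^k\cdot(\pi_t^g,\pi_b^g)$, whereas the paper invokes directly that $\pi_b^g$ is unchanged by a $b$-move.
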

\begin{proof}
    For the first $b$ move in $\gamma_g,$ the winner is $(\pi_b^g)^{-1}(2g) = \alpha_g.$ Since after a $b$ move, $\pi_b^g$ remains unchanged, we get that for all the following $b$ moves, the winner is still $\alpha_g.$ Finally, we apply the $t$ move to $b^{g} \cdot (\pi_t^g, \pi_b^g) = (\pi_t^g, \pi_b^g),$ so the winner is $(\pi_t^g)^{-1}(2g) = \alpha_{2g}.$ 
\end{proof}

The above, together with Lemma \ref{lemma2}, implies that all other curves $\alpha_i$ with $i \notin \{g, 2g\}$ are mapped by $f_g^{-1}$ onto some other $\alpha_j,$ where $j \in \{1, ... ,2g\}$. Therefore, for any curve $\alpha_i$ with $i \notin \{g, 2g\},$ we can read off its image just by looking at the labeled permutation that is the endpoint of $\gamma.$\\

In particular, if we write $f_g^{-1} = \phi^{-1} \circ \psi$ as in Section \ref{construction}, then for $i \notin \{g, 2g\},$ $\psi$ takes $\alpha_i$ to the side $\alpha_i$ of $X(\gamma_g(\pi_t^g, \pi_b^g)).$ Hence, $f_g(\alpha_i) = \phi^{-1}(\alpha_i).$ It remains to study the relabeling homeomorphism $\phi.$\\

Let $(\widetilde{\pi}_t^g, \widetilde{\pi}_b^g) := \gamma_g \cdot (\pi_t^g, \pi_b^g)$ be the endpoint of $\gamma_g.$ Since on the level of polygons $\phi$ just maps $P(\pi_t^g, \pi_b^g)$ onto $P(\widetilde{\pi}_t^g, \widetilde{\pi}_b^g)$ as the identity, we can read off what $\phi$ does on the sides of the polygons by just looking at the labeled permutations, i.e.
$$\phi(\alpha_i) = (\widetilde{\pi}_t^g)^{-1} \circ \pi_t^g (\alpha_i), \text{ or equivalently } \phi(\alpha_i) = (\widetilde{\pi}_b^g)^{-1} \circ \pi_b^g (\alpha_i).$$ 

We summarise what we showed in the following lemma.

\begin{lemma}\label{goodcurves}
    For any $i \notin \{g, 2g\},$ the pseudo-Anosov $f_g^{-1}$ maps the curve $\alpha_i$ to the curve 
    $$\phi^{-1}(\alpha_i) = (\pi_t^g)^{-1} \circ \widetilde{\pi}_t^g(\alpha_i).$$ 
\end{lemma}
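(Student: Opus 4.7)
The plan is to combine the decomposition $f_g^{-1} = \phi^{-1} \circ \psi$ from Section \ref{construction} with the preceding identification of winners and Lemma \ref{lemma2}, and then unwind the definition of the change-of-labeling $\phi$. Since only $\alpha_g$ and $\alpha_{2g}$ ever appear as winners along $\gamma_g$, each $\alpha_i$ with $i \notin \{g, 2g\}$ is a non-winner at every edge of $\gamma_g$; by Lemma \ref{lemma2} the $\alpha_i$-row of $V_{\gamma_g}$ therefore has a single non-zero entry equal to $1$. By the interpretation of rows of $V_\gamma$ given in Lemma \ref{lemma1} (together with the observation that the flip-move contribution is the identity), this means that $\psi(\alpha_i)$, homotoped onto the boundary of the endpoint polygon $P(\widetilde{\pi}_t^g, \widetilde{\pi}_b^g)$, is a single side traversed once.

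Next I would identify which side this is. Going edge-by-edge along $\gamma_g$, the Rauzy–Veech cut-and-paste move only alters the winner and loser sides, while every other side becomes a side of the new polygon carrying the same label; the flip move does not change labels either, only positions. By induction along the edges of $\gamma_g$, the side originally labeled $\alpha_i$ remains the side labeled $\alpha_i$ in $P(\widetilde{\pi}_t^g, \widetilde{\pi}_b^g)$, so $\psi(\alpha_i) = \alpha_i$ when viewed as a side of the endpoint polygon. Consequently $f_g^{-1}(\alpha_i) = \phi^{-1}(\psi(\alpha_i)) = \phi^{-1}(\alpha_i)$.

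It then remains to compute $\phi^{-1}(\alpha_i)$ explicitly. By definition the change of labeling $\phi$ is obtained by identifying the two regular $4g$-gons $P(\pi_t^g,\pi_b^g)$ and $P(\widetilde{\pi}_t^g, \widetilde{\pi}_b^g)$ via the identity of the plane, so a top side in position $k$ in the source gets sent to the top side in position $k$ in the target. If $k := \pi_t^g(\alpha_i)$, then $\phi(\alpha_i) = (\widetilde{\pi}_t^g)^{-1}(k) = (\widetilde{\pi}_t^g)^{-1}\circ \pi_t^g(\alpha_i)$, and inverting yields $\phi^{-1}(\alpha_i) = (\pi_t^g)^{-1}\circ \widetilde{\pi}_t^g(\alpha_i)$, as claimed.

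There is really no deep obstacle here; the whole content of the lemma is to assemble the results already established. The only point that requires mild care is the label-tracking step in the second paragraph, where one must distinguish between the label of a side (unchanged by non-winner cut-and-paste moves and by flips) and its position in the polygon (which does change), so that the conclusion $\psi(\alpha_i) = \alpha_i$ is understood as an equality of labeled sides rather than as an equality of positions. Once this is fixed, the formula for $\phi$ in terms of $\pi_t^g$ and $\widetilde{\pi}_t^g$ is immediate from reading off the top rows of the two labeled permutations, and the alternative expression via $\pi_b^g$ and $\widetilde{\pi}_b^g$ mentioned in the text above the lemma agrees because starting and ending permutations define the same unlabeled permutation.
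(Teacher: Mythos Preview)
Your proposal is correct and follows essentially the same route as the paper: the paper's ``proof'' is the paragraph preceding the lemma statement, which uses exactly your three steps (invoke Lemma~\ref{lemma2} for non-winners, observe that $\psi$ fixes their labels, then read off $\phi^{-1}$ from the two labeled permutations).

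One small imprecision worth fixing: in your second paragraph you write that a cut-and-paste move ``only alters the winner and loser sides, while every other side becomes a side of the new polygon carrying the same label''. In fact only the \emph{winner} side fails to remain a side with its original label; the loser side, like every other non-winner side, keeps its label (this is exactly what the proof of Lemma~\ref{lemma1} says). This matters here because several of the $\alpha_i$ with $i \notin \{g,2g\}$ (namely $\alpha_{g+1},\dots,\alpha_{2g-1}$) do occur as losers along $\gamma_g$, so your induction as literally stated would not cover them. Once you replace ``winner and loser'' by ``winner'' the argument goes through, and it then coincides with the paper's.
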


From the starting and ending points of $\gamma,$ we read off that

$$(\pi_t^g)^{-1} \circ \widetilde{\pi}_t^g(\alpha_i) = \begin{cases}
   \alpha_{g+i-1} & \text{if } \, i = 1, ... ,g\\
   \alpha_{i-g} & \text{if } \, i = g+1, ... , 2g-1\\
   \alpha_{2g} & \text{if } \, i = 2g.\\
\end{cases}$$

We are now ready to prove the upper bound for Theorem \ref{mytheorem}.

\begin{proof} (Upper bound in Theorem \ref{mytheorem})\\

The strategy we want to use for proving that a pseudo-Anosov $f$ satisfies $l_\mathcal{C}(f) \le \frac{k}{n}$ for some $k , n \in \mathbb{N}$ is the following: We find a curve $\alpha$ such that $$d_\mathcal{C}(\alpha, f^n(\alpha)) \le k.$$ 
Then, by the triangle inequality and the fact that $f$ acts as an isometry on the curve graph, it follows that for any $j \ge 1$
$$\frac{d_\mathcal{C}(\alpha, f^{jn}(\alpha))}{j} \le \frac{j \cdot d_\mathcal{C}(\alpha, f^n(\alpha))}{j} \le k,$$
and so $l_\mathcal{C}(f^n) \le k.$ It is now easy to see that then $l_\mathcal{C}(f) \le \frac{k}{n}$ (see \cite{Vaibhav}, Lemma 2.2).\\

Consider $f_g$ as defined in the beginning of the section. We want to apply the above mentioned strategy to $f_g^{-1}$ and the curve $\alpha_{2g-1}.$\\

From Lemma \ref{goodcurves} and the equation after, we see that $f_g^{-1}(\alpha_{2g-1}) = \alpha_{g-1}.$ Iterating this process yields

\begin{align*}
    f_g^{-1}(\alpha_{2g-1}) = \alpha_{g-1}\\
    f_g^{-2}(\alpha_{2g-1}) = \alpha_{2g-2}\\
    f_g^{-3}(\alpha_{2g-1}) = \alpha_{g-2}\\
    f_g^{-4}(\alpha_{2g-1}) = \alpha_{2g-3}\\
    \vdots\\
    f_g^{-(2g-2)}(\alpha_{2g-1}) = \alpha_g.
\end{align*} 

More formally, we have $f_g^{-k}(\alpha_{2g-1}) = \alpha_{g-\frac{k+1}{2}}$ for all odd $k$ between $1$ and $2g-3$ and $f_g^{-k}(\alpha_{2g-1}) = \alpha_{2g-\frac{k+2}{2}}$ for all even $k$ between $2$ and $2g-2.$\\

Note that after $2g-2$ iterates of $f_g^{-1}$, we hit the curve $\alpha_g$ for which Lemma \ref{goodcurves} doesn't apply anymore, so we can't use it to say something about $f_g^{-(2g-1)}(\alpha_{2g-1}).$ Note furthermore, that the orbit of $\alpha_{2g-1}$ under $f_g^{-1}$ traces out all the curves $\alpha_i$ with $i \notin \{g, 2g\}$ before hitting the curve $\alpha_g,$ so out of all the choices we could make in order to apply Lemma \ref{goodcurves}, this gives the best upper bound.\\

Since $\alpha_{2g-1}$ and $\alpha_g$ are both sides of the polygon $P(\pi_t^g, \pi_b^g),$ the curves $\alpha_{2g-1}$ and $\alpha_g$ of the surface $X(\pi_t^g, \pi_b^g)$ intersect in one point, which is the image of the corners of $P(\pi_t^g, \pi_b^g).$ So, the geometric intersection number of the two curves is at most $1$.\\

If the geometric intersection number is $0,$ then $d_\mathcal{C}(\alpha_{2g-1}, \alpha_g) = 1.$ If the geometric intersection number is $1,$ then by thickening up the curves, one can find a neighbourhood of their union that is homeomorphic to a torus with one boundary component $\beta$. This $\beta$ is a closed curve that is disjoint from both $\alpha_{2g-1}$ and $\alpha_g,$ and it is furthermore essential, because the genus of our surface is $\ge 2.$ It follows that $d_\mathcal{C}(\alpha_{2g-1}, \alpha_g) = 2.$\\

In any case, we have 
$$d_\mathcal{C}(\alpha_{2g-1}, f_g^{-(2g-2)}(\alpha_{2g-1})) = d_\mathcal{C}(\alpha_{2g-1}, \alpha_g) \le 2.$$

It follows that
$$l_\mathcal{C}(f_g) = l_\mathcal{C}(f_g^{-1}) \le \frac{2}{2g-2} = \frac{1}{g-1}.$$
 
\end{proof}

It remains to prove the lower bound of Theorem \ref{mytheorem}. This requires different techniques from the ones used in the proof of the upper bound. It is done by using an invariant train track for $f_g$ and the nesting lemma presented in Section \ref{trains}.

\begin{proof}(Lower bound in Theorem \ref{mytheorem})\\

    Let $\tau$ be the train track on $X_g$ as defined in Section \ref{properties section} (compare Figure \ref{fig:train track}). The train track $\tau$ is large, since its compliment is a $(4g-2)$-gon. Furthermore, $\tau$ is recurrent since every branch itself is a closed train route. Finally, in Lemma \ref{train track lemma} (and the proof of Lemma \ref{flip move lemma}) we showed that $\tau$ is invariant for $f_g$ and the corresponding train track matrix is given by $V_{\gamma_g}.$\\ 

    We have to compute $V_{\gamma_g}.$ Since the sequence of winner-loser pairs of $\gamma_g$ is given by $$(\alpha_g, \alpha_{2g}), \, (\alpha_g, \alpha_{2g-1}), ... , (\alpha_g, \alpha_{g+1}), \, (\alpha_{2g}, \alpha_g),$$ we obtain
    
    $$V_{\gamma_g} = V_{\alpha_{g}, \alpha_{2g}} \cdot V_{\alpha_g, \alpha_{2g-1}} \cdot ... \cdot V_{\alpha_g, \alpha_{g+1}} \cdot V_{\alpha_{2g}, \alpha_g} \cdot P,$$

    where $P$ is the permutation matrix as defined in Section \ref{construction}.\\ 

    With the convention that the $\alpha_i$ are ordered according to their indices, we compute
    
    $$V_{\alpha_{g}, \alpha_{2g}} \cdot V_{\alpha_g, \alpha_{2g-1}} \cdot ... \cdot V_{\alpha_g, \alpha_{g+1}} = \begin{pmatrix}
        Id & A\\
        0 & Id\\
    \end{pmatrix},$$

    where each block is a $g \times g$ block and $A$ is a matrix whose entries in the last row are all $1$ and every other entry is $0.$ Furthermore, we have $$V_{\alpha_{2g}, \alpha_g} = \begin{pmatrix}
        Id & 0\\
        B & Id\\
    \end{pmatrix},$$ where $B$ has a single non-zero entry equal to $1$ in its bottom right corner and $$P = \begin{pmatrix}
        0_{g \times g-1} & Id_{g \times g} & 0_{g \times 1}\\
        Id_{g-1 \times g-1} & 0_{g-1 \times g} & 0_{g-1 \times 1}\\
        0_{1 \times g-1} & 0_{1 \times g} & 1_{1 \times 1}\\
    \end{pmatrix},$$

    where the subscripts indicate the size of each block.\\

    In total, we obtain

    $$V_{\gamma_g} = \begin{pmatrix}
        Id & A\\
        0 & Id\\
    \end{pmatrix} \begin{pmatrix}
        Id & 0\\
        B & Id\\
    \end{pmatrix} P = \begin{pmatrix}
        Id + B & A\\
        B & Id\\
    \end{pmatrix} P = \begin{pmatrix}
        A_{g \times g-1} & Id_{g \times g} + B_{g \times g} & B_{g \times 1}\\
        Id_{g-1 \times g-1} & 0_{g-1 \times g} & 0_{g-1 \times 1}\\
        0_{1 \times g-1} & B_{1 \times g} & 1_{1 \times 1}
    \end{pmatrix}.$$

    In the above, the subscripts indicate the size of the matrices. $A_{g \times g-1}$ is a matrix whose entries in the last row are all $1$ and every other entry is $0.$ $B_{g \times g}$ is the same as $B$ and $B_{g \times 1}, B_{1 \times g}$ have all entries $0$ except for the $(g, 1)$ or $(1,g)$ entry respectively, which is equal to $1.$\\ 

    Since the $(2g,2g)$ entry on the diagonal of $V_{\gamma_g}$ is positive, we can use (\cite{Tsai}, Proposition 2.4) to argue that $V_{\gamma_g}^{4g}$ is a positive matrix. In fact, a more thorough inspection of $V_{\gamma_g}$ shows that already its $(4g-4)^{\text{th}}$ power is a positive matrix, but we omit the details since we only need to obtain a bound of order $\frac{1}{g}$.\\

    The fact that $V_{\gamma_g}^{4g}$ is a positive matrix implies that given a curve $\gamma$ carried by $\tau,$ the curve $f_g^{4g}(\gamma)$ passes over every branch of $\tau.$ Given instead a curve $\gamma$ carried by some diagonal extension of $\tau,$ then (\cite{Vaibhav}, Lemma 5.2) implies that $f_g^{K}(\gamma)$ with $K = 6(2g-2) + 4g = 16g - 12$ passes over every branch of $\tau.$\\

    Now, for $n \ge 1$ let $\tau_n := f_g^{Kn}(\tau)$ and set $\tau = \tau_0.$ Then, the above observation shows that $$PE(\tau_n) \subset int(PE(\tau_{n-1})).$$ From the Nesting Lemma (Section \ref{trains}) we obtain the following sequence of inclusions:
    $$... \, PE(\tau_n) \subset int(PE(\tau_{n-1})) \subset \mathcal{N}_1(int(PE(\tau_{n-1}))) \subset PE(\tau_{n-1}) \subset int(PE(\tau_{n-2})) ...$$

    Finally, choose a curve $\delta$ which is not carried by any extension of $\tau$ but such that $\gamma := f_g^{K}(\delta)$ is. Then for $n \ge 1,$ we get that $f_g^{Kn}(\gamma)$ is an element of $PE(\tau_n).$ The above sequence of inclusions now implies that $d_{\mathcal{C}}(\gamma, f_g^{Kn}(\gamma)) \ge n$ for any $n \ge 1.$ We conclude:

    $$l_\mathcal{C}(f_g^K) = \liminf\limits_{n \to \infty} \frac{d_\mathcal{C}(\gamma, f_g^{Kn}(\gamma))}{n} \ge 1,$$

    which implies
    
    $$l_\mathcal{C}(f_g) = \frac{l_\mathcal{C}(f_g^K)}{K} \ge \frac{1}{K} = \frac{1}{16g-12}.$$
\end{proof}

\subsection{Proof of Theorem \ref{bettertheorem}}

This Section is devoted to the proof of Theorem \ref{bettertheorem}.\\

Let $g \ge 2$ and 
$$l_g := \min \, \{ \,  l_\mathcal{C}(f) \, | \, f \text{ is a pseudo-Anosov in a hyperelliptic component of genus } g\}.$$ In order to prove Theorem \ref{bettertheorem}, we have to prove that $l_g \asymp \frac{1}{g}.$ Hence, it suffices to find an upper and a lower bound for $l_g$ that are of order $\frac{1}{g}.$ From Theorem \ref{mytheorem}, together with the fact that the pseudo-Anosov $f_g$ is in a hyperelliptic component, it follows immediately that $l_g \le l_\mathcal{C}(f_g) \le \frac{1}{g-1}.$ Hence, it remains to find a lower bound. For that, we will show that any pseudo-Anosov in a hyperelliptic component of genus $g$ has stable curve graph translation length greater or equal to $\frac{1}{16g-10}.$ The argument we use is very similar to the proof of the lower bound in Theorem \ref{mytheorem}. We start with some preliminary observations that are necessary for the proof.\\

Let $f$ be a pseudo-Anosov in a hyperelliptic component. Then, there exists a translation surface $(X, \omega)$ in a hyperelliptic component, a representative $\phi$ of $f$ and a corresponding pseudo-Anosov $\phi_X$ on $X$ (compare Section \ref{affine pA}). Note that $l_\mathcal{C}(f) = l_\mathcal{C}(\phi_X).$ Since $X$ is a hyperelliptic Riemann surface, it admits a hyperelliptic involution $\tau.$ Following the notation of Boissy--Lanneau, we let $\{\phi_X, \tau \circ \phi_X\} = \{\phi^+, \phi^-\},$ where $\phi^+$ preserves the orientation of the associated foliations and $\phi^-$ reverses it (see \cite{BL} for details). The important property for our purposes is given by the following lemma:

\begin{lemma}
    It holds that $l_\mathcal{C}(\phi^+) = l_\mathcal{C}(\phi^-).$
\end{lemma}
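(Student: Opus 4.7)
The plan is to exploit that $\phi^+$ and $\phi^-$ differ by composition with the hyperelliptic involution $\tau$, and that $\tau$ commutes with $\phi^+$ as a mapping class; then the two squares $(\phi^\pm)^2$ coincide, which forces equal stable curve graph translation lengths.

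From the setup $\{\phi^+, \phi^-\} = \{\phi_X, \tau \circ \phi_X\}$, and since $\tau^2 = Id$, we may write $\phi^- = \tau \circ \phi^+$ without loss of generality. The key step is to verify that $\tau$ and $\phi^+$ commute in $\Mod(X)$. Since $\tau$ is affine with derivative $-Id$ and $\phi^+$ is affine with some derivative $A$, the conjugate $\phi^+ \circ \tau \circ (\phi^+)^{-1}$ is an affine involution with derivative $A \cdot (-Id) \cdot A^{-1} = -Id$. In a hyperelliptic component of $\mathcal{H}(2g-2)$ or $\mathcal{H}(g-1, g-1)$, the hyperelliptic involution is the unique affine involution of $(X, \omega)$ with derivative $-Id$ -- this is a standard structural fact about hyperelliptic translation surfaces and is implicit in the Boissy--Lanneau setup. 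Consequently $\phi^+ \circ \tau \circ (\phi^+)^{-1} = \tau$, so $\tau$ and $\phi^+$ commute in the mapping class group.

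Once commutativity is in hand, the rest is a short computation:
$$(\phi^-)^2 = (\tau \circ \phi^+)(\tau \circ \phi^+) = \tau^2 \circ (\phi^+)^2 = (\phi^+)^2,$$
using $\tau^2 = Id$. Hence the mapping classes $(\phi^-)^2$ and $(\phi^+)^2$ coincide, so they act identically on $\mathcal{C}(S)$, and in particular $l_\mathcal{C}((\phi^-)^2) = l_\mathcal{C}((\phi^+)^2)$. Since the stable curve graph translation length satisfies $l_\mathcal{C}(f^n) = n \cdot l_\mathcal{C}(f)$ for every $n \ge 1$ (directly from the definition via $\liminf$), dividing by $2$ yields $l_\mathcal{C}(\phi^+) = l_\mathcal{C}(\phi^-)$, as required.

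The main obstacle will be the commutativity step, which is the only place where the geometry of hyperelliptic components enters. Everything else is a formal manipulation that uses only $\tau^2 = Id$ together with the homogeneity of the stable translation length under powers.
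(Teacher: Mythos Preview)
Your proof is correct, and the core is identical to the paper's: both hinge on the commutativity of $\tau$ with $\phi_X$ (equivalently $\phi^+$), established via uniqueness of the hyperelliptic involution under conjugation. The paper cites this as \cite{BL1}, Lemma~2.3, while you phrase it in terms of affine involutions with derivative $-Id$; these are the same argument.

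The only divergence is in the finishing step. The paper picks a curve $\alpha$ fixed by $\tau$ (which exists for any hyperelliptic involution) and computes directly that $(\tau\circ\phi_X)^n(\alpha) = \phi_X^n\circ\tau^n(\alpha) = \phi_X^n(\alpha)$, so the two orbits through $\alpha$ coincide and the liminfs agree. You instead observe that commutativity and $\tau^2 = Id$ force $(\phi^-)^2 = (\phi^+)^2$ as mapping classes, then invoke $l_\mathcal{C}(f^2) = 2\,l_\mathcal{C}(f)$. Both conclusions are immediate once commutativity is in hand; your route avoids needing a $\tau$-invariant curve, while the paper's route avoids the (easy, via subadditivity) homogeneity $l_\mathcal{C}(f^n) = n\,l_\mathcal{C}(f)$.
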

\begin{proof}
    We have to show that $l_\mathcal{C}(\phi_X) = l_\mathcal{C}(\tau \circ \phi_X).$ Since $\tau$ is a hyperelliptic involution, there is a curve $\alpha$ that is fixed by $\tau.$ Furthermore, conjugating $\tau$ by $\phi_X$ yields a hyperelliptic involution of $X$ which -since $\tau$ is unique- has to be equal to $\tau.$ Thus, $\tau$ commutes with $\phi_X$ (cf. \cite{BL1}, Lemma 2.3). It follows that
    $$(\tau \circ \phi_X)^n(\alpha) = \phi_X^n \circ \tau^n(\alpha) = \phi_X^n(\alpha)$$
    for any $n \in \mathbb{N},$ and therefore 
    $$l_\mathcal{C}(\tau \circ \phi_X) = \liminf\limits_{n \to \infty} \frac{d_\mathcal{C}(\alpha, (\tau \circ \phi_X)^n(\alpha))}{n} = \liminf\limits_{n \to \infty} \frac{d_\mathcal{C}(\alpha, \phi_X^n(\alpha))}{n} = l_\mathcal{C}(\phi_X).$$
\end{proof}

Let $n \in \{2g, \, 2g+1\}.$ Let $\mathcal{A}$ be an alphabet consisting of $n$ letters $\alpha_1, ... , \alpha_n$ and let 
$$(\pi_t, \pi_b) = \begin{pmatrix}
    \alpha_1 & \alpha_2 & ... & \alpha_{n-1} & \alpha_n\\
    \alpha_n & \alpha_{n-1} & ... & \alpha_2 & \alpha_1
\end{pmatrix}.$$

Throughout this section, we refer to $(\pi_t, \pi_b)$ as the \textit{central permutation}. Consider the connected component of the labeled Rauzy diagram of $(\pi_t, \pi_b).$ We call this component $\mathcal{D}_{(\pi_t, \pi_b)}.$ Rauzy showed that this connected component is isomorphic to the so called unlabeled Rauzy diagram of $\pi = \pi_b \circ \pi_t^{-1}$ (see \cite{R} or \cite{BL1}). This means in particular that no two different labeled permutations in $\mathcal{D}_{(\pi_t, \pi_b)}$ define the same unlabeled permutation.\\ 

Any mapping class defined through Rauzy-Veech induction on $\mathcal{D}_{(\pi_t, \pi_b)}$ is affine for a hyperelliptic component. More concretely, if $n = 2g,$ then it's affine for $\mathcal{H}(2g-2),$ and if $n = 2g+1,$ then for $\mathcal{H}(g-1, g-1)$ (see for example \cite{BL1}).\\

Given the central permutation, we can perform $t$-moves to it and obtain a loop in $\mathcal{D}_{(\pi_t, \pi_b)}$ consisting of $n-1$ vertices corresponding to 
$$t^m \cdot (\pi_t, \pi_b) = \begin{pmatrix}
    \alpha_1 & \alpha_2 & \alpha_3 & ... & \alpha_{m+1} & \alpha_{m+2} & \alpha_{m+3} & ... & \alpha_n\\
    \alpha_n & \alpha_{m} & \alpha_{m-1} & ... & \alpha_1 & \alpha_{n-1} & \alpha_{n-2} & ... & \alpha_{m+1}
\end{pmatrix}$$
for $m = 1, ... , n-1.$ We refer to this loop in the labeled Rauzy diagram as the \textit{central loop}.\\

We are now ready to prove Theorem \ref{bettertheorem}:

\begin{proof}(Theorem \ref{bettertheorem})\\

    Let $f$ be in a hyperelliptic component of genus $g$ and let $\phi^+, \phi^-$ be as constructed above. In order to prove Theorem \ref{bettertheorem}, it suffices to show that $l_\mathcal{C}(\phi^+) = l_\mathcal{C}(\phi^-) \ge \frac{1}{16g-10}.$\\

    Boissy--Lanneau prove the following statement:\\

    If $\phi^+$ cannot be obtained by Rauzy-Veech induction on $\mathcal{D}_{(\pi_t, \pi_b)},$ then $\phi^-$ is obtained by a path in the augmented labeled Rauzy diagram that starts at a permutation in the central loop and consists of a single flip move which corresponds to the last edge of the path (cf. \cite{BL}, Theorem 4.1).\\

    From this, we obtain that at least one of $\phi^+, \phi^-$ will be of the form $f_\gamma$ (compare the notation of Section \ref{top RV}) where $\gamma$ is one of two cases:
    \begin{itemize}
        \item $\gamma$ is a closed path in $\mathcal{D}_{(\pi_t, \pi_b)}.$
        \item $\gamma$ is a path in the augmented labeled Rauzy diagram starting at some $t^m \cdot (\pi_t, \pi_b)$ and whose last edge is the only edge with label $f.$ 
    \end{itemize}

    Hence, it suffices to analyze the stable curve graph translation length of $f_\gamma$ in the two cases.\\
    
    Recall from Section \ref{properties section} that we assign a matrix $V_\gamma$ to $f_\gamma$ which is a train track matrix for a natural train track carried by $f_\gamma.$ We want to have a power $k$ such that $V_\gamma^k$ becomes a positive matrix. Having found such a $k,$ we can use (\cite{Vaibhav}, Lemma 5.2) and argue analogously to the proof of the lower bound in Theorem \ref{mytheorem} to conclude $l_\mathcal{C}(f_\gamma) \ge \frac{1}{6(2g-2) + k}.$\\

    We are left with the task of finding the power $k.$ Recall, that since $V_\gamma$ is primitive, it suffices to find a non-zero diagonal entry. Then, we can apply the result of Tsai (\cite{Tsai}, Proposition 2.4) to obtain $k$.\\

    We analyze the two cases of the path $\gamma$ separately:\\
    
    In the first case, $\gamma$ is a closed path in $\mathcal{D}_{(\pi_t, \pi_b)}.$ So, $V_\gamma$ is a product of matrices $V_{\alpha, \beta}.$ In particular, we don't have to multiply with a permutation matrix at the end, because starting point and endpoint of $\gamma$ are the same labeled permutation. Since the $V_{\alpha, \beta}$ are non-negative matrices with all diagonal entries equal to $1,$ all diagonal entries of $V_\gamma$ are positive.\\

    In the second case, $\gamma$ is a path that starts at $t^m \cdot (\pi_t, \pi_b)$ for some $m \in \{1, ... , n-1\}.$ The edges of $\gamma$ are all $t$ or $b$ edges, except for the last one which is an $f$ edge. The last vertex of $\gamma$ defines the same unlabeled permutation as the starting point $t^m \cdot (\pi_t, \pi_b).$ This implies that the second to last vertex of $\gamma$ is a labeled permutation in $\mathcal{D}_{(\pi_t, \pi_b)}$ which defines the same unlabeled permutation as $f \cdot t^m \cdot (\pi_t, \pi_b).$ Since
    $$t^m \cdot (\pi_t, \pi_b) = \begin{pmatrix}
    \alpha_1 & \alpha_2 & \alpha_3 & ... & \alpha_{m+1} & \alpha_{m+2} & \alpha_{m+3} & ... & \alpha_n\\
    \alpha_n & \alpha_{m} & \alpha_{m-1} & ... & \alpha_1 & \alpha_{n-1} & \alpha_{n-2} & ... & \alpha_{m+1}
    \end{pmatrix},$$
    we have 
    $$f \cdot t^m \cdot (\pi_t, \pi_b) = \begin{pmatrix}
    \alpha_{m+1} & ... & \alpha_{n-2} & \alpha_{n-1} & \alpha_1 & ... & \alpha_{m-1} & \alpha_m & \alpha_n\\
    \alpha_n & ... & \alpha_{m+3} & \alpha_{m+2} & \alpha_{m+1} & ... & \alpha_3 & \alpha_2 & \alpha_1
    \end{pmatrix}.$$
    The labeled permutation 
    $$(\pi_t', \pi_b') := t^{n-m-1} \cdot (\pi_t, \pi_b) = \begin{pmatrix}
        \alpha_1 & \alpha_2 & \alpha_3 & ... & \alpha_{n-m} & \alpha_{n-m+1} & \alpha_{n-m+2} & ... & \alpha_n\\
        \alpha_n & \alpha_{n-m-1} & \alpha_{n-m-2} & ... & \alpha_1 & \alpha_{n-1} & \alpha_{n-2} & ... & \alpha_{n-m}
    \end{pmatrix}$$
    is an element of $\mathcal{D}_{(\pi_t, \pi_b)}$ which defines the same unlabeled permutation as $f \cdot t^m \cdot (\pi_t, \pi_b).$ Since there is a unique such permutation, the second to last vertex of $\gamma$ has to be $(\pi_t', \pi_b').$ Hence, the endpoint of $\gamma$ is
    $$f \cdot (\pi_t', \pi_b') = \begin{pmatrix}
        \alpha_{n-m} & ... & \alpha_{n-2} & \alpha_{n-1} & \alpha_1 & ... & \alpha_{n-m-2} & \alpha_{n-m-1} & \alpha_n\\
        \alpha_n & ... & \alpha_{n-m+2} & \alpha_{n-m+1} & \alpha_{n-m} & ... & \alpha_3 & \alpha_2 & \alpha_1
    \end{pmatrix}.$$
    From the above, we obtain that $V_\gamma$ is a product of matrices $V_{\alpha, \beta}$ followed by multiplication with a permutation matrix $P$ which encodes the relabeling between $t^m \cdot (\pi_t, \pi_b)$ and $f \cdot (\pi_t', \pi_b').$ Notice that the $n^{\text{th}}$ entry in the top row of the two labeled permutations is the same, namely $\alpha_n$. This implies that the $(n,n)$-entry of $P$ is equal to $1.$ Since any product of matrices $V_{\alpha, \beta}$ has positive entries on the diagonal, it follows that $V_\gamma$ has a positive entry on the diagonal, namely the entry $(n,n).$\\
     
    We showed that in both cases the matrix $V_\gamma$ has a positive entry on the diagonal. Since $V_\gamma$ is an $n \times n$ matrix with either $n = 2g$ or $n = 2g+1$, using (\cite{Tsai}, Proposition 2.4), we obtain that $V_\gamma^{4g+2}$ is a positive matrix. Hence, we have found the power $k = 4g+2$ and conclude that $l_\mathcal{C}(f_\gamma) \ge \frac{1}{6(2g-2) + 4g+2} = \frac{1}{16g - 10}.$\\

    Since this lower bound is of order $\frac{1}{g},$ this concludes the proof.
\end{proof}

\section{Proof of Theorem \ref{myothertheorem}}\label{pennerexample}

In this section, we prove Theorem \ref{myothertheorem} as well as Corollaries \ref{cor1} and \ref{cor2} which follow directly from the Theorem. For the proof of Theorem \ref{myothertheorem}, we first construct the $f_n$ and show that they are pseudo-Anosov. We proceed with finding bounds for their stretch factor and stable curve graph translation length respectively, in order to show the claims of the Theorem. The idea is to slighlty modify Penner's famous example of a pseudo-Anosov in order to obtain the desired properties.\\

For $g \ge 3,$ consider a genus $g$ surface $S$ as in Figure \ref{fig:surface}.\\ 

\begin{figure}[h]
    \centering
    \begin{tikzpicture}
        \node[anchor=south west,inner sep=0] at (0,0){\includegraphics[scale=0.4]{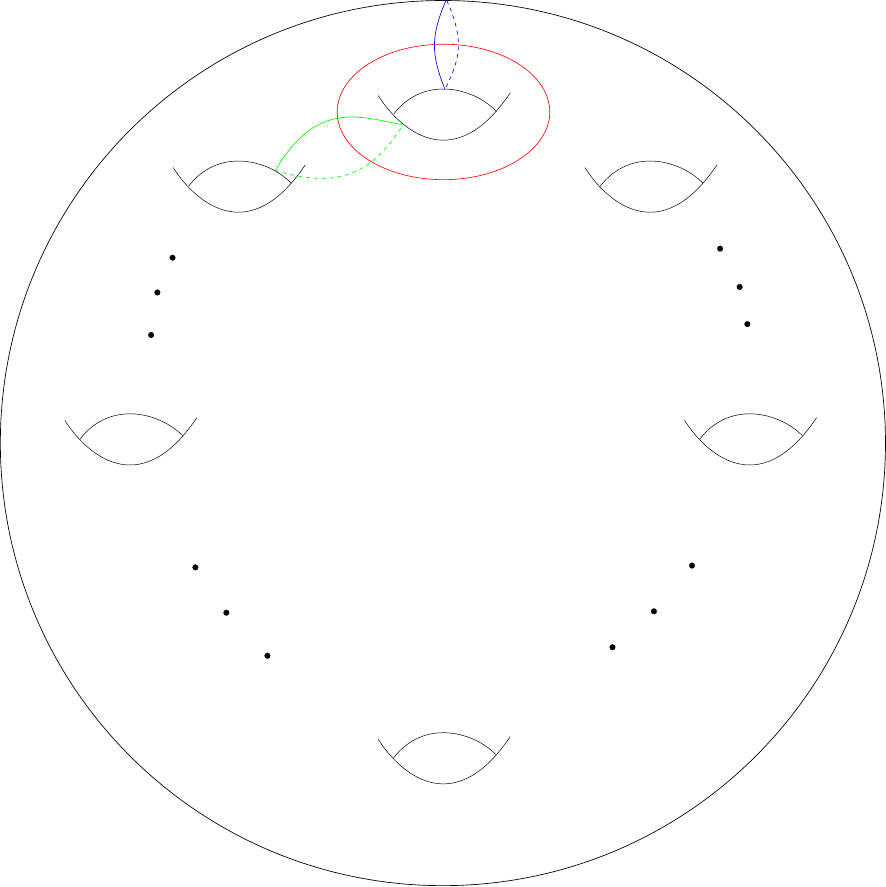}};
        \node at (3,6.2) {$\textcolor{blue}{b}$};
        \node at (2.25, 4.65) {$\textcolor{green}{c}$};
        \node at (3.1, 4.6) {$\textcolor{red}{a}$};
    \end{tikzpicture}
    \caption{Genus $g$ surface}
    \label{fig:surface}
\end{figure}

Let $\rho$ be the order $g$ homeomorphism that rotates the surface in Figure \ref{fig:surface} anti-clockwise and let $a, b, c$ be the curves depicted. Let $T_\gamma$ denote the right-handed Dehn twist about a curve $\gamma.$ Penner shows that the mapping class $\rho \circ T_c \circ T_a^{-1} \circ T_b$ is pseudo-Anosov (see \cite{P2}). We define $f_n \in \Mod(S)$ by $f_n := \rho \circ T_c \circ T_a^{-1} \circ T_b^{n}.$ 

\begin{lemma}
    $f_n$ is pseudo-Anosov.
\end{lemma}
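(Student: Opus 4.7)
The plan is to pass to the $g$-th power of $f_n$ in order to absorb the finite-order rotation $\rho$, and then apply Penner's criterion for producing pseudo-Anosovs. Writing $\Phi_n := T_c \circ T_a^{-1} \circ T_b^n$, so that $f_n = \rho \circ \Phi_n$, the identity $\rho^g = \mathrm{id}$ lets one commute the $\rho$-factors to the left via $\rho \Phi_n = (\rho \Phi_n \rho^{-1})\, \rho$, yielding
$$f_n^g \;=\; (\rho \Phi_n \rho^{-1}) \cdot (\rho^2 \Phi_n \rho^{-2}) \cdots (\rho^{g-1} \Phi_n \rho^{-(g-1)}) \cdot \Phi_n.$$
Since $\rho^k T_\gamma \rho^{-k} = T_{\rho^k(\gamma)}$, each factor rewrites as $T_{\rho^k(c)} \circ T_{\rho^k(a)}^{-1} \circ T_{\rho^k(b)}^n$. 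Thus $f_n^g$ is a composition of positive Dehn twists about the curves in $A := \{\rho^k(b), \rho^k(c) : 0 \le k \le g-1\}$ and negative Dehn twists about the curves in $B := \{\rho^k(a) : 0 \le k \le g-1\}$, with each curve appearing at least once.

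To apply Penner's theorem it then suffices to verify that $A$ and $B$ each consist of pairwise disjoint simple closed curves, and that $A \cup B$ fills $S$. The disjointness statements can be read off the figure: the $a$-curves sit on pairwise disjoint handles, and the $b$- and $c$-curves only interact near the central disc, in a way that remains disjoint within each family. The filling property is precisely the one used in Penner's original example $\rho \circ T_c \circ T_a^{-1} \circ T_b$ and is unchanged by raising $T_b$ to a positive power. Penner's theorem therefore gives that $f_n^g$ is pseudo-Anosov.

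Finally, a mapping class some nonzero power of which is pseudo-Anosov must itself be pseudo-Anosov, since the properties of being periodic or reducible are preserved under passing to powers; hence $f_n$ is pseudo-Anosov. The only step requiring genuine geometric input is the verification of the filling and disjointness conditions on the $\rho$-orbit of $\{a,b,c\}$; I expect this to be the main obstacle to writing things out carefully, but it is inherited directly from Penner's treatment of the original example and needs no new ingredient.
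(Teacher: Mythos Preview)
Your proof is correct and follows essentially the same route as the paper: pass to the $g$-th power to eliminate $\rho$, rewrite $f_n^g$ as a product of Dehn twists about the $\rho$-orbits of $a,b,c$, apply Penner's criterion with the multicurves $\{\rho^k(b),\rho^k(c)\}$ and $\{\rho^k(a)\}$, and then deduce that $f_n$ itself is pseudo-Anosov from the fact that periodic and reducible classes have periodic or reducible powers. The only cosmetic difference is that the paper phrases the last step as ``some power $f_n^m$ fixes a curve, so $f_n^{mg}$ does too, contradicting that $f_n^g$ is pseudo-Anosov,'' which is exactly your observation in contrapositive form.
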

\begin{proof}
    The curves $\rho^i(a), \rho^i(b), \rho^i(c)$ for $i = 1, ... , g$ fill the surface. Note that 
    $$f_n^g = (\rho \circ T_c \circ T_a^{-1} \circ T_b^{n})^g = \prod_{i = 1}^{g} T_{\rho^i(c)} \circ T_{\rho^i(a)}^{-1} \circ T_{\rho^i(b)}^{n}.$$ 
    From (\cite{P1}, Theorem 3.1), with $\mathcal{C} = \{\rho^i(b), \rho^i(c) \, | \, i = 1, ..., g\}$ and $\mathcal{D} = \{\rho^i(a) \, | \, i = 1, ..., g\},$ it follows that $f_n^g$ is pseudo-Anosov. Assume $f_n$ were periodic or reducible. Then, some power $f_n^m$ of $f_n$ fixes a curve. Hence $f_n^{mg}$ fixes that curve too, which is absurd since $f_n^g$ is pseudo-Anosov. So, $f_n$ has to be pseudo-Anosov. 
\end{proof}

\begin{lemma}\label{sf}
    For all $n,$ it holds that $\lambda(f_n) \ge n^{\frac{1}{g}},$ and consequently $l_\mathcal{T}(f_n) \ge \log(n^{\frac{1}{g}}).$
\end{lemma}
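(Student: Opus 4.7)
The plan is to reduce the inequality on $\lambda(f_n)$ to one on $\lambda(f_n^g)$: since $f_n$ is pseudo-Anosov, $\lambda(f_n^g) = \lambda(f_n)^g$, so it suffices to prove $\lambda(f_n^g) \ge n$. Once that is established, $\lambda(f_n) \ge n^{1/g}$ follows at once, and hence $l_\mathcal{T}(f_n) = \log \lambda(f_n) \ge \log(n^{1/g})$, which is the second assertion.

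To obtain $\lambda(f_n^g) \ge n$, I would use the factorisation
$$f_n^g = \prod_{i=1}^{g} T_{\rho^i(c)} \circ T_{\rho^i(a)}^{-1} \circ T_{\rho^i(b)}^{n}$$
already derived in the proof of the previous lemma, and apply Penner's construction of an invariant train track to this product. With $\mathcal{C} = \{\rho^i(b), \rho^i(c) : i = 1, \ldots, g\}$ and $\mathcal{D} = \{\rho^i(a) : i = 1, \ldots, g\}$ filling $S$, \cite{P1} provides a large, birecurrent invariant train track $\tau$ and a non-negative, primitive train-track matrix $M_n$ with $\lambda(f_n^g)$ equal to its Perron--Frobenius eigenvalue. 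The matrix $M_n$ factors as a product of $3g$ non-negative elementary ``transvection'' matrices, one per Dehn twist in the factorisation; the $g$ factors coming from $T_{\rho^i(b)}^n$ each have the form $I + n K_i$ with $K_i$ a non-negative, non-zero integer matrix whose support encodes the intersections of $\rho^i(b)$ with the other curves in $\mathcal{C}\cup\mathcal{D}$, while all other factors dominate $I$ entrywise.

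Because every factor entrywise dominates $I$, expanding the product and keeping only the summand that retains a single factor $nK_i$ and the identity from all other factors shows that $M_n$ entrywise dominates $n K_i$ (in fact, iterating this yields $M_n \ge n^g K_1 K_2 \cdots K_g$ along an appropriate arrangement). Entrywise monotonicity of the Perron--Frobenius eigenvalue on the cone of non-negative matrices then gives $\lambda(M_n) \ge \rho(n K_i) = n\,\rho(K_i)$ for a suitable choice of $i$, or more robustly $\lambda(M_n)^g \ge n^g \rho(K_1 \cdots K_g)$. In either form, the bound reduces to exhibiting a closed walk in the weighted graph of the dominant matrix with product at least $n$ (or $n^g$).

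The main obstacle is this combinatorial verification: one must check that the product of the $K_i$, or some sub-product picked out by a cycle in $\tau$, has strictly positive spectral radius. Since entries $(K_i)_{jk} > 0$ precisely when $b_j$ is a branch of $\rho^i(b)$ and $b_k$ crosses $\rho^i(b)$, and since $\mathcal{C} \cup \mathcal{D}$ fills $S$, the directed support graph of $\sum_i K_i$ is strongly connected. This yields the required cycle, hence $\lambda(f_n^g) \ge n$, and the lemma follows. The bookkeeping for a sharper constant is unnecessary here, as only the exponent $1/g$ is claimed.
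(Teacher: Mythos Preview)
Your reduction to bounding $\lambda(f_n^g)$ via Penner's invariant train track is the right starting point, and it is also what the paper does (though the paper packages the computation as a matrix $M_n$ for $f_n$ itself and then takes $M_n^g$). The gap is in the last step. In Penner's train track for this configuration the branches are indexed by the curves $\rho^i(a),\rho^i(b),\rho^i(c)$, and the factor for $T_{\rho^i(b)}^n$ is indeed $I+nK_i$ where $K_i$ has its only non-zero entries in the row of $\rho^i(b)$, in the columns of curves meeting $\rho^i(b)$. But $\rho^i(b)$ is simple and the $b$--curves are pairwise disjoint, so the diagonal entry of $K_i$ vanishes and $K_iK_j=0$ for all $i,j$. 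Each $K_i$ is therefore nilpotent, $\rho(K_i)=0$, and the product $K_1\cdots K_g$ is the zero matrix. Your two proposed bounds $\rho(M_n)\ge n\,\rho(K_i)$ and $\rho(M_n)\ge n^g\rho(K_1\cdots K_g)$ are both vacuous. For the same reason the support graph of $\sum_i K_i$ is \emph{not} strongly connected: its only edges go from $a$--type vertices to $b$--type vertices, so it has no directed cycles at all. Filling of $\mathcal{C}\cup\mathcal{D}$ does not help here, because $\sum_i K_i$ records only the intersections with the $b$--curves, not the full intersection pattern.

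What actually produces a growth of order $n$ is the interaction of the $K_i$ with the factors coming from $T_{\rho^i(a)}^{-1}$ and $T_{\rho^i(c)}$; dropping those to $I$ destroys the mechanism. The paper handles this by writing down the $3g\times 3g$ matrix $M_n$ explicitly (its nontrivial $3\times 3$ block has a column with entries $n+1,\,n,\,n+1$), computing $M_n^g$, and applying the Collatz--Wielandt bound: the minimum row sum of $M_n^g$ is $n+1$, hence $\rho(M_n)^g\ge n+1$. If you want to salvage your approach without the explicit computation, you would need to keep at least one $a$--twist factor adjacent to an $I+nK_i$ and exhibit a directed cycle through that pair; a bare product of the $K_i$ will never do.
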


\begin{proof}
    The proof is analogous to the one Penner presents (\cite{P2}, Section "An upper bound by example"). There, Penner defines a train track which is invariant for $f_n.$ By analyzing the space of measures arising as counting measures of the curves 
    $$a, b, c, \rho(a), \rho(b), \rho(c), ... , \rho^{g-1}(a), \rho^{g-1}(b), \rho^{g-1}(c)$$ 
    on this train track, Penner constructs a $3g \times 3g$-matrix $M$ which counts how often each curve passes over any other curve and shows that the spectral radius of $M$ is the stretch factor of the pseudo-Anosov $f_1.$ More precisely, if we denote the above curves in the above order by $\gamma_i,$ with $i = 1, ... , 3g-3,$ then the $(i,j)$ entry of $M$ counts how often $f_1(\gamma_j)$ passes over $\gamma_i,$ i.e. the $j^{\text{th}}$ column describes the image of $\gamma_j.$\\
    
    Using the same argument, we obtain that $\lambda(f_n)$ can be computed as the spectral radius of the matrix
    $$M_n = \begin{pmatrix}
        0 & 0 & 0 & \cdots & 0 & Id\\
        A_n & B_n & 0 & \cdots & 0 & C_n\\
        0 & Id & 0 & \cdots & 0 & 0\\
        0 & 0 & Id & \cdots & 0 & 0\\
        \vdots & \vdots & \vdots & & \vdots & \vdots\\
        0 & 0 & 0 & \cdots & Id & 0\\
    \end{pmatrix},$$
    where all blocks are $3 \times 3$ matrices and $M_n$ is a $3g \times 3g$ matrix. $Id$ denotes the identity matrix and 
    $$
    A_n = \begin{pmatrix}
        n + 1 & 1 & 1\\
        n & 1 & 0\\
        n + 1 & 1 & 2\\
    \end{pmatrix}, \
    B_n = \begin{pmatrix}
        0 & 0 & 0\\
        0 & 0 & 0\\
        1 & 0 & 0\\
    \end{pmatrix}, \
    C_n = \begin{pmatrix}
        0 & 0 & 1\\
        0 & 0 & 0\\
        0 & 0 & 1\\
    \end{pmatrix}.
    $$
    Penner's example is exactly the case where $n = 1.$ Computing the $g^\text{th}$ iterate of the matrix for $g \ge 4 $ yields
    $$ M_n^g = \begin{pmatrix}
        A_n & B_n & 0 & 0 & \cdots & 0 & 0 & 0 & C_n\\
        C_nA_n & D_n + C_nB_n & B_nA_n & 0 & \cdots & 0 & 0 & 0 & C_n^2\\
        0 & C_n & D_n & B_nA_n & \cdots & 0 & 0 & 0 & 0\\
        \vdots & \vdots & \vdots & \vdots & \vdots & \vdots & \vdots & \vdots & \vdots\\
        0 & 0 & 0 & 0 & \cdots & 0 & C_n & D_n & B_nA_n\\
        B_nA_n & 0 & 0 & 0 & \cdots & 0 & 0 & C_n & D_n\\
    \end{pmatrix},
    $$
    where $D_n := A_n + B_nC_n.$ This computation is analogous to (\cite{P2}).\\
    
    In the case $g=3,$ one obtains

    $$M_n^g = \begin{pmatrix}
        A_n & B_n & C_n\\
        C_nA_n & D_n+C_nB_n & B_nA_n + C_n\\
        B_nA_n & C_n & D_n\\
    \end{pmatrix}.$$\\
    By the Collatz-Wielandt formula, we obtain that the spectral radius of $M_n^g$ is at least as big as the lowest row sum, which in both cases is given by the second row and equals $n + 1.$ Denoting the spectral radius of $M_n$ by $\rho(M_n),$ we have
    $$\lambda(f_n) = \rho(M_n) = \rho(M_n^g)^{\frac{1}{g}} \ge (n + 1)^{\frac{1}{g}} \ge n^{\frac{1}{g}}.$$
\end{proof}

\begin{lemma}\label{tl}
    For any $n,$ it holds that $l_\mathcal{C}(f_n) \le \frac{1}{g-1}.$
\end{lemma}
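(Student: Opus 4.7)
The plan is to adapt the upper-bound strategy used in the proof of Theorem \ref{mytheorem}: exhibit a single simple closed curve $\alpha$ and an integer $k\ge 1$ such that $d_\mathcal{C}(\alpha,f_n^k(\alpha))\le 1$, and conclude $l_\mathcal{C}(f_n)\le 1/k$ from the triangle inequality together with the fact that $f_n$ acts as an isometry on $\mathcal{C}(S)$. I plan to take $k=g-1$ and $\alpha=\rho(a)$, with both choices independent of $n$; this yields a bound uniform in $n$, which is exactly what Theorem \ref{myothertheorem} requires.

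Factor $f_n=\rho\circ w$, where $w:=T_c\circ T_a^{-1}\circ T_b^n$ is supported in a regular neighbourhood of the multicurve $a\cup b\cup c$. Then $w(\beta)=\beta$ for any simple closed curve $\beta$ disjoint from $a\cup b\cup c$, so $f_n(\beta)=\rho(\beta)$ for every such $\beta$. An easy induction on $k$ then shows that whenever the curves $\alpha,\rho(\alpha),\ldots,\rho^{k-1}(\alpha)$ are all disjoint from $a\cup b\cup c$, one has $f_n^k(\alpha)=\rho^k(\alpha)$. Taking $\alpha=\rho(a)$ translates this into the requirement that $\rho^{j+1}(a)$ be disjoint from $a\cup b\cup c$ for $j=0,1,\ldots,g-2$, i.e.\ that $a$ be disjoint from $\rho^i(a),\rho^i(b),\rho^i(c)$ for every $i\in\{1,\ldots,g-1\}$. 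Granting this disjointness, one obtains $f_n^{g-1}(\rho(a))=\rho^{g-1}(\rho(a))=\rho^g(a)=a$; since $\rho(a)$ and $a$ are themselves disjoint (the same assumption with $i=1$), we get $d_\mathcal{C}(\rho(a),f_n^{g-1}(\rho(a)))=d_\mathcal{C}(\rho(a),a)\le 1$, and hence $l_\mathcal{C}(f_n)\le 1/(g-1)$.

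The one step I expect to have to justify by hand is the geometric disjointness claim: that $a$ meets no non-trivial $\rho$-translate of $a$, $b$, or $c$. This is a direct inspection of Figure \ref{fig:surface}. The three curves $a,b,c$ are all confined to a neighbourhood of one handle of the $\rho$-symmetric surface, and $\rho$ rotates the handles among themselves; because $g\ge 3$, every non-trivial $\rho$-translate of this neighbourhood lies around a different handle, and therefore cannot meet $a$. In addition, the disjointness of $a$ from its own $\rho$-translates is already forced by Penner's hypotheses, since $\{\rho^i(a)\}_{i=0}^{g-1}$ and $\{\rho^i(b),\rho^i(c)\}_{i=0}^{g-1}$ must be multicurves for the construction to produce a pseudo-Anosov.

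Once this geometric fact is established for the specific picture, the rest of the argument is routine bookkeeping. As noted in the remark following Theorem \ref{myothertheorem}, a more careful orbit-tracking (iterating this procedure along the $\rho$-orbit) would actually improve the bound to order $1/g^2$, but the weaker bound $1/(g-1)$ is already sufficient for Corollaries \ref{cor1} and \ref{cor2}.
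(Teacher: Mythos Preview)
Your overall strategy is exactly the one the paper uses: find a curve $\gamma$ with $f_n^{g-1}(\gamma)$ disjoint from $\gamma$ by exploiting that $f_n=\rho\circ w$ with $w$ supported on $a\cup b\cup c$. The problem is your choice of curve. In Penner's configuration (as used here) the curve $c$ is \emph{not} confined to a single handle; it meets $\rho(a)$. You can read this off directly from the matrix $M_n$ in Lemma~\ref{sf}: the block $B_n$ sits in block-position $(2,2)$ and its nonzero $(3,1)$ entry says that $f_n(\rho(a))$ passes once over $\rho(c)$ in addition to $\rho^2(a)$. That can only happen if $T_c$ moves $\rho(a)$, i.e.\ $i(c,\rho(a))\neq 0$. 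So the very first step of your induction fails: $f_n(\rho(a))\neq \rho^2(a)$, and you do not get $f_n^{g-1}(\rho(a))=a$.

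Your heuristic ``$a,b,c$ are all confined to a neighbourhood of one handle'' is in fact incompatible with the filling hypothesis: if it held, the full $\rho$-orbit $\{\rho^i(a),\rho^i(b),\rho^i(c)\}$ would be a disjoint union of curves, and a multicurve never fills. The paper avoids this issue by taking $\gamma=\rho(b)$ rather than $\rho(a)$. Again the matrix confirms this works: the $\rho(b)$-column of $M_n$ has a single nonzero entry (coming from the $Id$ block), so $f_n(\rho(b))=\rho^2(b)$, and iterating gives $f_n^{g-1}(\rho(b))=b$, disjoint from $\rho(b)$. If you insist on using an $a$-curve, starting from $\rho^2(a)$ instead of $\rho(a)$ does give a clean orbit, but it terminates one step early and only yields $l_\mathcal{C}(f_n)\le \frac{1}{g-2}$, which is weaker than the stated bound.
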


\begin{proof}
    Consider the curve $\gamma := \rho(b).$ Since it is disjoint from $a, b, c,$ we have $f_n(\gamma) = \rho(\gamma) = \rho^2(b).$ Iteratively, we obtain 
    $$f_n^{g-1}(\gamma) = \rho^{g-1}(\gamma) = \rho^{g}(b) = b.$$ 
    Since $b$ is disjoint from $\gamma,$ we get that $d_\mathcal{C}(\gamma, f_n^{g-1}(\gamma)) = 1,$ which implies that 
    $$l_\mathcal{C}(f_n) \le \frac{d_\mathcal{C}(\gamma, f_n^{g-1}(\gamma))}{g-1} = \frac{1}{g-1}.$$
\end{proof}

Lemma \ref{sf} shows that the stretch factors of the $f_n$ tend to infinity, while Lemma \ref{tl} implies that their stable curve graph translation length is bounded from above by the constant required. This finishes the proof of Theorem \ref{myothertheorem}.\\

Gadre and Tsai show that the stable curve graph translation length of $f_1,$ i.e. Penner's original example, can be bounded from above not just by $\frac{1}{g-1}$ but even by $\frac{4}{g^2+g-4}$ (see \cite{Vaibhav}, proof of Theorem 6.1). This is done by computing more iterates of the curve $\rho(b).$ In particular, Gadre and Tsai argue that an iterate of $\rho(b)$ is contained in a neighbourhood of a union of some of the curves in the set $\{\rho^i(a), \rho^i(b), \rho^i(c) \, | \, i=1,...,g\},$ and as long as this union is not over all the curves of the set, one can find a curve disjoint from both $\rho(b)$ and its iterate. So, the proof boils down to controlling the neighbourhoods for a high enough iterate. Since $f_n$ differs from $f_1$ only by twisting more often about $b,$ the iterates of $\rho(b)$ under $f_n$ are contained in the same neighbourhoods as the iterates of $\rho(b)$ under $f_1.$ Hence, looking at $f_n$ instead of $f_1$ does not affect Gadre and Tsai's argument, and we obtain $l_\mathcal{C}(f_n) \le \frac{4}{g^2 + g - 4}.$ We leave out the details, since our bound of $\frac{1}{g-1}$ is enough in order to prove Corollaries \ref{cor1} and \ref{cor2}.\\

\begin{customthm}{1.3}
    \textit{There exists a sequence $(h_g)_{g=2}^\infty$, where $h_g$ is a pseudo-Anosov of a genus $g$ surface, with
    $$l_{\mathcal{T}}(h_g) \to \infty \text{ and } l_\mathcal{C}(h_g) \to 0,$$ as $g \to \infty.$}
\end{customthm}
\begin{proof}
    For $g = 2,$ let $h_g$ be an arbitrary pseudo-Anosov. For $g \ge 3,$ let $n_g := g^g$ and let $h_g := f_{n_g},$ where $f_{n_g}$ is as constructed at the begining of Section \ref{pennerexample} for the corresponding genus $g.$\\

    From Lemma \ref{sf} we obtain $l_\mathcal{T}(h_g) \ge \log(g),$ and from Lemma \ref{tl} we get that $l_\mathcal{C}(f_g) \le \frac{1}{g-1}.$ It follows that 
    $$\lim\limits_{g \to \infty} l_\mathcal{T}(h_g) = \infty \text{ and } \lim\limits_{g \to \infty} l_\mathcal{C}(h_g) = 0.$$
\end{proof}

\begin{customthm}{1.4}
    \textit{For any $g \ge 3,$ there exists $q \in \mathbb{Q}$ such that there are infinitely many non-conjugate pseudo-Anosovs in $\Mod(S)$ with stable curve graph translation length $q.$}
\end{customthm}
\begin{proof}
    Let $L = \{l_\mathcal{C}(f) \, | \, f \in \Mod(S) \text{ pseudo-Anosov}\}$ be the set of all stable curve graph translation lengths of pseudo-Anosovs on $S.$ Bowditch proves that there is a power $m$ such that every pseudo-Anosov $f \in \Mod(S)$ preserves a geodesic in the curve graph after being raised to the power $m$ (see \cite{Bowditch}, Theorem 1.4). Hence, the set $L$ is contained in the set $\{\frac{n}{m} \, | \, n \in \mathbb{N}\}.$ In particular, this implies that for any constant $C,$ the set $\{x \in L \, | \, x \le C\}$ is finite.\\

    From Theorem \ref{myothertheorem}, we know that there is a sequence of pseudo-Anosovs $f_n$ with $\lambda(f_n) \longrightarrow \infty$ as $n \to \infty,$ and furthermore $l_\mathcal{C}(f_n) \le C$ for all $n,$ where $C$ can be taken to be any constant above $\frac{1}{g-1}.$ After having to possibly pass to a subsequence, we can assume that all the $\lambda(f_n)$ are pairwise different, and so all the $f_n$ are pairwise non-conjugate.\\ 
    
    Finally, since the set $\{l_{\mathcal{C}}(f_n) \, | \, n \in \mathbb{N}\}$ is a finite set by the discussion above, there must be a $q$ that is attained by infinitely many of the $f_n.$
\end{proof}

\section{Construction of infinite multiplicity}\label{end}

In this section, we prove Theorem \ref{construction of inf mult}. The proof relies on results about subsurface projections by Masur--Minsky (\cite{MM2}). We start by recalling some of the theory.\\

Let $S$ be a closed genus $g$ surface and $Y$ an incompressible, connected subsurface. For a set $X,$ let $\mathcal{P}(X)$ denote the set of finite subsets of $X.$ Masur--Minsky construct subsurface projection maps $\pi_Y: \mathcal{C}(S) \to \mathcal{P}(\mathcal{C}(Y)).$ The way to think about these maps is the following: Given a curve in $\mathcal{C}(S),$ look at its intersection with $Y.$ This intersection is a collection of arcs. Turn these arcs into closed curves by piecing them together with the boundary components of $Y.$ This yields a finite union of curves which we define as the image of $\pi_Y$ of the curve we started with (see \cite{MM2}, Section 2 for details). Note that the image of a curve is the empty set if and only if its intersection with $Y$ is empty.\\

We write $d_Y$ for the distance in $\mathcal{C}(Y).$ This should not be confused with $d_\mathcal{C}$ which as before denotes the distance in $\mathcal{C}(S).$ The crucial result of Masur--Minsky for this work is (\cite{MM2}, Theorem 3.1). We state it here for completeness.

\begin{theorem*}
    Let $Y$ be a subsurface of $S$ as above. Let $G$ be a geodesic in $\mathcal{C}(S)$ such that every vertex of $G$ has non-empty subsurface projection to $Y$. There exists a constant $M$ depending only on the genus of $S$ such that $\text{diam}(\pi_Y(G)) \le M.$
\end{theorem*}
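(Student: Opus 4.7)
The plan is to prove the theorem in two steps: a local ``Lipschitz-type'' estimate on the projection, followed by a global argument using Gromov hyperbolicity of $\mathcal{C}(S)$.

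For the first step, I would establish the following fact: if $\gamma_1, \gamma_2 \in \mathcal{C}(S)$ are disjoint and both intersect $Y$ essentially, then $d_Y(\pi_Y(\gamma_1), \pi_Y(\gamma_2)) \le 2$. This is a direct topological check. The arcs of $\gamma_i \cap Y$ are mutually disjoint, so the closed curves obtained by concatenating these arcs with pieces of $\partial Y$ (which is how $\pi_Y$ is defined) can be realized either disjointly or differing only by surgery along $\partial Y$. In either case there is a curve in $\mathcal{C}(Y)$ adjacent to both projections.

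Applying this fact to consecutive vertices of $G$ already yields $d_Y(\pi_Y(\alpha_i), \pi_Y(\alpha_{i+1})) \le 2$, but this only bounds the diameter by $2|G|$, which grows with the length of the geodesic. The entire point of the theorem is that the bound is uniform, so the second step has to do real work. My approach would be to argue by contradiction using Gromov hyperbolicity of $\mathcal{C}(S)$: if two vertices $\alpha, \beta$ of $G$ satisfy $d_Y(\pi_Y(\alpha), \pi_Y(\beta)) \gg 0$, I would construct a ``shortcut'' path from $\alpha$ to $\beta$ passing through $\partial Y$ that is strictly shorter than the sub-geodesic of $G$, contradicting $G$ being a geodesic. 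The intuition is that $\pi_Y$ behaves like a nearest-point projection onto a quasi-convex subset of $\mathcal{C}(S)$ -- namely, the set of curves lying in $Y$, embedded via a bounded neighbourhood of $\partial Y$ -- and nearest-point projections onto quasi-convex subsets of hyperbolic spaces are well known to send geodesics to bounded-diameter sets, with bound depending only on the hyperbolicity and quasi-convexity constants.

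The main obstacle is making the quasi-convexity and nearest-point-projection statements precise in the combinatorial setting of $\mathcal{C}(S)$: one must show that the ``$1$-neighbourhood of $Y$'' in $\mathcal{C}(S)$ is quasi-convex with a constant depending only on the genus, and that the combinatorially defined $\pi_Y$ agrees with the coarse nearest-point projection up to bounded error. A cleaner alternative, closer in spirit to the original argument of Masur--Minsky, is to proceed by induction on $|G|$: pick a midpoint $\mu$ of $G$, apply the thin-triangle condition to $\alpha \mu \beta$ together with $\partial Y$, and split into cases according to whether the vertices in each half of $G$ remain far from $\partial Y$ in $\mathcal{C}(S)$. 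In the ``far'' case, the inductive hypothesis applied to the half gives a uniform bound on its projection diameter; in the ``close'' case, the disjointness lemma together with the proximity to $\partial Y$ controls the projection directly. Combining the two halves at the transition vertex, via another application of the disjointness lemma, yields an estimate $\text{diam}(\pi_Y(G)) \le M$ where $M$ depends only on the hyperbolicity constant of $\mathcal{C}(S)$. Uniformity of this constant in the genus, via Bowditch or Hensel--Przytycki--Webb, then completes the proof.
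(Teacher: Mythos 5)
A point of orientation first: the paper does not prove this statement at all. It is the Bounded Geodesic Image theorem, quoted from Masur--Minsky (\cite{MM2}, Theorem 3.1) and used as a black box in the final section, so the comparison here is with the known proofs rather than with an argument of the paper. Your step 1 (disjoint curves that both meet $Y$ essentially have $\pi_Y$-images a bounded distance apart) is correct up to the exact constant and is precisely Masur--Minsky's Lemma 2.3; and you are right that the entire content is the passage from the resulting bound $4\,d_\mathcal{C}(\alpha,\beta)$ to a bound independent of the length of $G$.

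The gap is that neither of your proposed global mechanisms works as described. The nearest-point-projection heuristic is not just hard to make precise, it is false: the set of curves lying in $Y$ (indeed the whole star of $\partial Y$) has diameter at most $2$ in $\mathcal{C}(S)$, since every such curve is disjoint from a component of $\partial Y$, so nearest-point projection onto it carries no information about positions in $\mathcal{C}(Y)$; more generally $d_Y$ is invisible to $d_\mathcal{C}$ -- two curves contained in $Y$ satisfy $d_\mathcal{C}(\alpha,\beta)\le 2$ while $d_Y(\alpha,\beta)$ can be arbitrarily large. For the same reason the ``shortcut contradicting geodesicity'' cannot be run: a large value of $d_Y(\alpha,\beta)$ never forces $d_\mathcal{C}(\alpha,\beta)$ to be large (the Lipschitz lemma only controls $d_Y$ along paths all of whose vertices cut $Y$), so exhibiting a short path through the star of $\partial Y$ contradicts nothing; such paths are exactly what the hypothesis on $G$ excludes, and the theorem's content is precisely that excluding them bounds the projection. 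Your fallback induction on $|G|$ has two unresolved problems: proximity of a vertex of $G$ to $\partial Y$ in $\mathcal{C}(S)$ gives no control whatsoever on its image under $\pi_Y$ (a curve disjoint from $\partial Y$ may lie inside $Y$ and project to any vertex of $\mathcal{C}(Y)$), so the ``close'' case is not actually handled; and in the ``far'' case concatenating the two halves yields $2M$ plus a transition constant, so the induction does not close without an additional contraction-type statement saying the projection is coarsely constant along long subsegments far from the zero-projection set -- which is the real theorem in disguise. That missing mechanism is what Masur--Minsky supply using the $\delta$-hyperbolicity of $\mathcal{C}(S)$ from \cite{MM}; note that $M=M(\delta(S))$ already depends only on the topological type, so uniform hyperbolicity (Hensel--Przytycki--Webb, etc.) is not needed for the statement as used here, though later work of Webb does give a surface-independent constant by elementary surgery arguments. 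If you want a self-contained proof, I would rebuild step 2 along one of those lines rather than the quasi-convexity heuristic.
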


In other words, the above theorem says that every geodesic in $\mathcal{C}(S),$ whose vertices have non-trivial image under $\pi_Y,$ projects to a uniformly bounded set. We'll make use of this as follows: Assume we are given curves $\beta_1, \beta_2 \in \mathcal{C}(S)$ with $d_Y(\pi_Y(\beta_1), \pi_Y(\beta_2)) > M.$ Then, any geodesic joining $\beta_1$ and $\beta_2$ in $\mathcal{C}(S)$ has to pass through a curve that doesn't intersect the subsurface $Y.$ In particular, we will apply the above theorem to $Y = S \setminus \alpha,$ where $\alpha$ is the curve from Theorem \ref{construction of inf mult}. Here, we make use of the fact that $\alpha$ is non-separating, so that $S \setminus \alpha$ is connected. Note that for $Y = S \setminus \alpha,$ the only curve in $S$ with empty subsurface projection to $Y$ is the curve $\alpha.$ Hence, in this case we can conclude that the geodesic connecting $\beta_1$ and $\beta_2$ has to pass through $\alpha.$\\ 

From now on, we omit to write the subsurface projection $\pi_Y$ when measuring the distance in $\mathcal{C}(Y)$ i.e. if $\beta_1, \beta_2$ are in $\mathcal{C}(S),$ then we simply write $d_Y(\beta_1, \beta_2)$ instead of $d_Y(\pi_Y(\beta_1), \pi_Y(\beta_2)).$\\ 

We recall the setup of Theorem \ref{construction of inf mult}. Let $f \in \Mod(S)$ be a pseudo-Anosov and assume there are curves $\alpha_i, \, i \in \mathbb{Z},$ such that $f(\alpha_i) = \alpha_{i+1}$ and $d_\mathcal{C}(\alpha_i, \alpha_j) = |i-j| \, m$ for all $i,j \in \mathbb{Z}$ and some fixed $m \in \mathbb{N}.$ Note that this ensures that $l_\mathcal{C}(f) = m.$ Furthermore, note that the condition on $\alpha$ in Theorem \ref{construction of inf mult} guaranties the existence of the $\alpha_i$ as the iterates of $\alpha$ under $f.$ We use the convention that $\alpha = \alpha_1.$\\

From now on, let $Y = S \setminus \alpha$ and $h$ be a pseudo-Anosov of $Y.$ Since $h$ is pseudo-Anosov, we know that $d_Y(\beta, h^k(\beta)) \to \infty$ as $k \to \infty$ for any curve $\beta$ in $Y.$ Choose $N_h \in \mathbb{N}$ big enough such that for all $n \ge N_h$ we have 
\begin{equation}\label{inequality}
    d_Y(\alpha_2, h^n(\alpha_2)) > 2M + d_Y(\alpha_0, \alpha_2),
\end{equation} where $M$ denotes the constant from the Masur-Minsky theorem. The reason for this choice will become apparent in the proof of Lemma \ref{distance lemma} below.\\ 

Fix some $n \ge N_h$ and define $ F = f_{h,n} := h^n \circ f.$ By abuse of notation, from now on we simply write $h$ instead of $h^n.$ Hence, we have $F = h \circ f$ and our goal is to compute $l_\mathcal{C}(F).$ \\ 

In order to compute the stable curve graph translation length of $F,$ we need to iterate $F$ on some curve. We choose the curve $\alpha_0.$ In the following, we use the notation $h_i := f^{i-1}hf^{-(i-1)}$ for all $i \ge 1.$ In particular, $h_1 = h.$ We obtain:

$$F(\alpha_0) = h_1 \circ f(\alpha_0) = \alpha_1,$$ as well as $$F^2(\alpha_0) = F(\alpha_1) = h_1(\alpha_2).$$ 
Iterating this process yields
$$F^k(\alpha_0) = h_1 \, f \, ... \, h_1 \, f \, (\alpha_0) = h_1 \, \underbrace{f \, h_1 \, f^{-1}}_{h_2} \, \underbrace{f^2 \, h_1 \, f^{-2}}_{h_3} \, f^3 \, h_1 \, ... \, \underbrace{f^{k-1} \, h_1 \, f^{-(k-1)}}_{h_{k-1}} \, f^k \, (\alpha_0) = $$ $$ = h_1 \, h_2 \, h_3 \, ... \, h_{k-1} \, (\alpha_k).$$

Hence, to prove Theorem \ref{construction of inf mult}, we have to compute the distance in the curve graph between $\alpha_0$ and $h_1 \, h_2 \, h_3 \, ... \, h_{k-1} \, (\alpha_k).$ We formulate this as a Lemma \ref{distance lemma} below. The following Figure \ref{fig:twists in curve graph} is a schematic drawing of the vertices of the curve graph we are interested in and should help the reader visualize the distances appearing in the proof of Lemma \ref{distance lemma}. Note that $f$ "acts on" Figure \ref{fig:twists in curve graph} by translating horizontally.

\begin{figure}[h]
    \centering
    \begin{tikzpicture}
    
        \node[anchor=south west,inner sep=0] at (0,0){\includegraphics[scale=0.7]{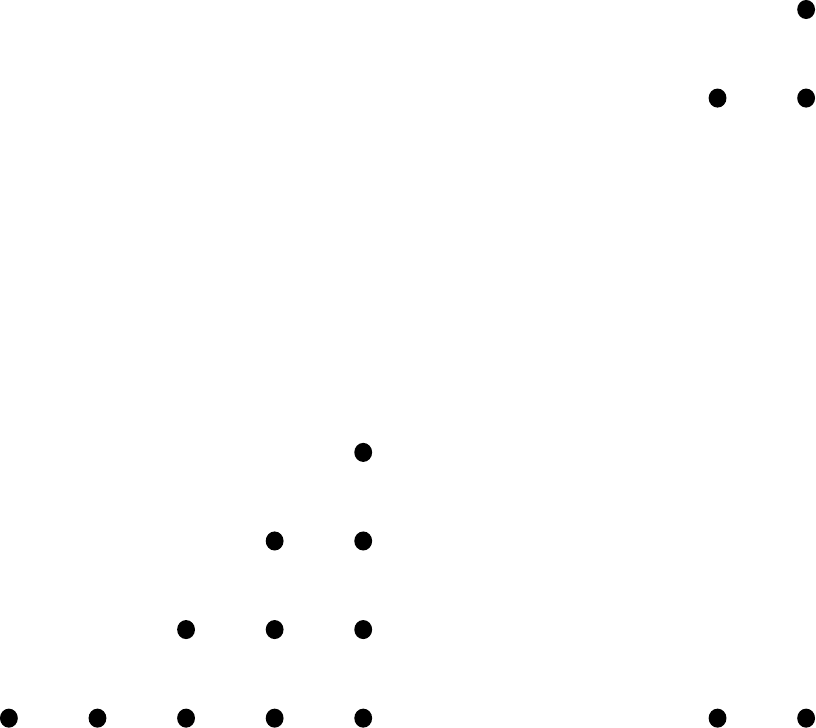}};
        \node at (0.2,-0.3) {$\alpha_0$};
        \node at (1.25, -0.3) {$\alpha_1$};
        \node at (2.3, -0.3) {$\alpha_2$};
        \node at (3.35, -0.3) {$\alpha_3$};
        \node at (4.4, -0.3) {$\alpha_4$};
        \node at (8.6, -0.3) {$\alpha_K$};
        \node at (9.65, -0.3) {$\alpha_{K+1}$};
        \node at (2, 0.75) {$h_1(\alpha_2)$};
        \node at (3.35, 0.75) {$h_2(\alpha_3)$};
        \node at (4.7, 0.75) {$h_3(\alpha_4)$};
        \node at (2.95, 1.8) {$h_1h_2(\alpha_3)$};
        \node at (4.8, 1.8) {$h_2h_3(\alpha_4)$};
        \node at (4.4, 2.85) {$h_1h_2h_3(\alpha_4)$};
        \node at (7, 7.45) {$h_1...h_{K-1}(\alpha_K)$};
        \node at (11.1, 7.45) {$h_2...h_K(\alpha_{K+1})$};
        \node at (11.1, 8.5) {$h_1...h_K(\alpha_{K+1})$};
        \node at (6.5, 0.1) {$\cdots$};
        \node at (8.6, 3.675) {$\vdots$};
        \node at (9.65, 3.675) {$\vdots$};
        \node at (6.4, 5.25) {$.$};
        \node at (6.5, 5.35) {$.$};
        \node at (6.6, 5.45) {$.$};
        
    \end{tikzpicture}
    \caption{Schematic diagram of part of the curve graph}
    \label{fig:twists in curve graph}
\end{figure}

\begin{lemma}\label{distance lemma} 
    It holds that $$d_\mathcal{C}(\alpha_0, \, h_1 \, h_2 \, h_3 \, ... \, h_{k-1} \, (\alpha_k)) = km,$$ for any $k \in \mathbb{N}.$
\end{lemma}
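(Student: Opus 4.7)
The upper bound is immediate: since $F = h \circ f$ is an isometry of $\mathcal{C}(S)$ and $F(\alpha_0) = h_1(\alpha_1) = \alpha_1$ (as $h_1$ fixes the boundary $\alpha = \alpha_1$ of $Y$), subadditivity of the distance gives $d_\mathcal{C}(\alpha_0, F^k(\alpha_0)) \le k \cdot d_\mathcal{C}(\alpha_0, \alpha_1) = km$. The substantive work is to prove the matching lower bound; I propose to do this by induction on $k$, with base case $k = 1$ being immediate.

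Write $\gamma_k := h_1 h_2 \cdots h_{k-1}(\alpha_k)$ and $\delta := h_2 \cdots h_{k-1}(\alpha_k)$; the relation $f h_i = h_{i+1} f$ gives $\delta = f(\gamma_{k-1})$. The inductive step reduces to the following claim: \emph{every geodesic in $\mathcal{C}(S)$ from $\alpha_0$ to $\gamma_k$ passes through $\alpha_1$.} Granting this, the geodesic decomposes, giving
\[
d_\mathcal{C}(\alpha_0, \gamma_k) = d_\mathcal{C}(\alpha_0, \alpha_1) + d_\mathcal{C}(\alpha_1, \gamma_k).
\]
Since $h_1$ fixes $\alpha_1$, we have $d_\mathcal{C}(\alpha_1, \gamma_k) = d_\mathcal{C}(\alpha_1, \delta) = d_\mathcal{C}(\alpha_0, \gamma_{k-1})$, and the inductive hypothesis closes the computation to $km$.

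To prove the claim, I would use the Masur--Minsky theorem on subsurface projections to $Y_1 = S \setminus \alpha_1$. The only curve in $\mathcal{C}(S)$ with empty projection to $Y_1$ is $\alpha_1$ itself, where it is essential that $\alpha_1$ is non-separating so that $Y_1$ is connected. It therefore suffices to show $d_{Y_1}(\alpha_0, \gamma_k) > M$. Applying the triangle inequality in $\mathcal{C}(Y_1)$ to the vertices $\alpha_0, \alpha_2, h_1(\alpha_2), h_1(\delta) = \gamma_k$, using that $h_1$ acts isometrically on $\mathcal{C}(Y_1)$ and the chosen inequality $d_{Y_1}(\alpha_2, h_1(\alpha_2)) > 2M + d_{Y_1}(\alpha_0, \alpha_2)$, one obtains
\[
d_{Y_1}(\alpha_0, \gamma_k) > 2M - d_{Y_1}(\alpha_2, \delta).
\]
The claim thus reduces to $d_{Y_1}(\alpha_2, \delta) \le M$.

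This last bound is the main obstacle and is where the induction is really used. I would prove it by another application of Masur--Minsky to a geodesic from $\alpha_2$ to $\delta$ in $\mathcal{C}(S)$. Such a geodesic has length $d_\mathcal{C}(\alpha_2, f(\gamma_{k-1})) = d_\mathcal{C}(\alpha_1, \gamma_{k-1}) = d_\mathcal{C}(\alpha_0, \gamma_{k-2}) = (k-2)m$ by the inductive hypothesis and the fact that $F$ is an isometry with $F^{-1}(\alpha_1) = \alpha_0$. If some vertex of this geodesic were $\alpha_1$, then $d_\mathcal{C}(\alpha_1, \delta)$ would be forced to equal $(k-2)m - m = (k-3)m$, contradicting the direct computation $d_\mathcal{C}(\alpha_1, \delta) = d_\mathcal{C}(\alpha_0, \gamma_{k-1}) = (k-1)m$. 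Hence every vertex projects non-trivially to $Y_1$, and Masur--Minsky delivers the required bound. The small cases $k = 1, 2$ should be verified directly to ensure the induction is properly seeded.
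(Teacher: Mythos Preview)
Your proposal is correct and follows essentially the same route as the paper: induction on $k$, with the key step being to force any geodesic from $\alpha_0$ to $\gamma_k$ through $\alpha_1$ via the Masur--Minsky bounded projection theorem, which in turn is fed by showing (again via Masur--Minsky and a short contradiction using the inductive hypothesis) that the relevant auxiliary geodesic avoids $\alpha_1$. The only cosmetic difference is that you conjugate by $h_1$ and work with the pair $(\alpha_2,\delta)$ where the paper works with $(h_1(\alpha_2),\gamma_k)$; since $h_1$ acts isometrically on $\mathcal{C}(Y_1)$ these are equivalent, and your handling of the base cases $k=1,2$ matches the paper's.
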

\begin{proof}
    We prove this statement by induction over $k.$ The base case $k = 1$ holds since $d_\mathcal{C}(\alpha_0, \alpha_1) = m$ by assumption.\\ 
    
    We also do the case $k = 2$ explicitly since it gives an insight to how the induction step works. For this case, we want to compute $d_\mathcal{C}(\alpha_0, h_1(\alpha_2)).$ Since $d_\mathcal{C}(\alpha_0, \alpha_1) = m$ and $$d_\mathcal{C}(\alpha_1, h_1(\alpha_2)) = d_\mathcal{C}(h_1(\alpha_1), h_1(\alpha_2)) = d_\mathcal{C}(\alpha_1, \alpha_2) = m,$$ we obtain that $d_\mathcal{C}(\alpha_0, h_1(\alpha_2)) \le 2m.$ Furthermore, using equation (\ref{inequality}) we obtain 
    $$d_Y(\alpha_0, h_1(\alpha_2)) \, \ge \, d_Y(\alpha_2, h_1(\alpha_2)) - d_Y(\alpha_0, \alpha_2) > 2M > M.$$

    It follows from the Masur-Minsky theorem that any geodesic between $\alpha_0$ and $h_1(\alpha_2)$ in $\mathcal{C}(S)$ has to pass through the curve $\alpha_1.$ We obtain:
    $$d_\mathcal{C}(\alpha_0, h_1(\alpha_2)) \ge d_\mathcal{C}(\alpha_0, \alpha_1) + d_\mathcal{C}(\alpha_1, h_1(\alpha_2)) = m + m = 2m.$$
    Assume now that for all natural numbers $k$ up to some $K \in \mathbb{N},$  the claim
    $$d_\mathcal{C}(\alpha_0, h_1 \, h_2 \, h_3 \, ... \, h_{k-1} \, (\alpha_k)) = km$$
    holds. We have to show the analogous claim for $K+1.$\\

    Note that $f^{-1}(h_2 \, ... \, h_K(\alpha_{K+1})) = h_1 \, ... \, h_{K-1}(\alpha_K).$ It follows that
    $$d_\mathcal{C}(\alpha_1, h_1 \, ... \, h_K(\alpha_{K+1})) = d_\mathcal{C}(\alpha_1, h_2 \, ... \, h_K(\alpha_{K+1})) = d_\mathcal{C}(\alpha_0, h_1 \, ... \, h_{K-1}(\alpha_K)) = Km,$$
    and hence
    $$d_\mathcal{C}(\alpha_0, h_1 \, ... \, h_K(\alpha_{K+1})) \le d_\mathcal{C}(\alpha_0, \alpha_1) + d_\mathcal{C}(\alpha_1, h_1 \, ... \, h_K(\alpha_{K+1})) \le m + Km = (K+1)m.$$
    For the other inequality, we want to use the subsurface projection to $Y.$ Our goal is to show that the distance of the projections of $\alpha_0$ and $h_1 \, ... \, h_K(\alpha_{K+1})$ is sufficiently big.\\

    First, note that 
    $$d_\mathcal{C}(h_1(\alpha_2), \, h_1 \, ... \, h_K(\alpha_{K+1})) = d_\mathcal{C}(\alpha_2, \, h_2 \, ... \, h_K(\alpha_{K+1})),$$
    and by applying $f^{-1}$ to both sides the above is equal to 
    $$d_\mathcal{C}(\alpha_1, \, h_1 \, ... \, h_{K-1}(\alpha_K)) = d_\mathcal{C}(\alpha_1, \, h_2 \, ... \, h_{K-1}(\alpha_K)),$$
    which by applying $f^{-1}$ again is equal to
    $$d_\mathcal{C}(\alpha_0, \, h_1 \, ... \, h_{K-2}(\alpha_{K-1})),$$
    which by the induction hypothesis equals $(K-1)m.$ From the calculations above, we also know that
    $$d_\mathcal{C}(\alpha_1, \, h_1 \, ... \, h_K(\alpha_{K+1}))$$ equals $Km.$\\

    We now want to argue that a geodesic between $h_1(\alpha_2)$ and $h_1 \, ... \, h_K(\alpha_{K+1})$ doesn't pass through $\alpha_1.$ So, assume such a geodesic did pass through $\alpha_1.$ Then:
    $$d_\mathcal{C}(h_1(\alpha_2), \, h_1 \, ... \, h_K(\alpha_{K+1})) \ge d_\mathcal{C}(h_1(\alpha_2), \alpha_1) \,   + \,  d_\mathcal{C}(\alpha_1, h_1 \, ... \, h_K(\alpha_{K+1})) \,  =$$ 
    $$= \,  m \, + \, Km \, = (K+1)m.$$

    But then $(K-1)m \, = \, d_\mathcal{C}(h_1(\alpha_2), \, h_1 \, ... \, h_K(\alpha_{K+1})) \, \ge \, (K+1)m$ which is a contradiction. Hence, no geodesic between $h_1(\alpha_2)$ and $h_1 \, ... \, h_K(\alpha_{K+1})$ passes through $\alpha_1.$ In particular, this means that any such geodesic consists of vertices with non-empty subsurface projection image to $Y.$ So, we can use the Theorem of Masur--Minsky to conclude that a projection to $\mathcal{C}(Y)$ of such a geodesic has image whose diameter is bounded by $M.$ In particular, we obtain
    $$d_Y(h_1(\alpha_2), \, h_1 \, ... \, h_K(\alpha_{K+1})) \le M.$$

    Using equation (\ref{inequality}) again, it follows that
    $$d_Y(\alpha_2, h_1 \, ... \, h_K(\alpha_{K+1})) \ge d_Y(\alpha_2, h_1(\alpha_2)) \, - d_Y(h_1 \, ... \, h_K(\alpha_{K+1}), \, h_1(\alpha_2)) \, >$$
    $$> \, 2M + d_Y(\alpha_0, \alpha_2) \, - \, M  \, = \, M \, + \, d_Y(\alpha_0, \alpha_2),$$

    and finally,
    $$d_Y(\alpha_0, h_1 \, ... \, h_K(\alpha_{K+1})) \ge d_Y(\alpha_2, h_1 \, ... \, h_K(\alpha_{K+1})) - d_Y(\alpha_0, \alpha_2) >$$ 
    $$ > M + d_Y(\alpha_0, \alpha_2) - d_Y(\alpha_0, \alpha_2) = M.$$

    This inequality together with the Masur-Minsky theorem shows that any geodesic from $\alpha_0$ to $h_1 \, ... \, h_K(\alpha_{K+1})$ has to pass through $\alpha_1.$ For the distance of the two points, this implies:
    $$d_\mathcal{C}(\alpha_0, h_1 \, ... \, h_K(\alpha_{K+1})) \ge d_\mathcal{C}(\alpha_0, \alpha_1) + d_\mathcal{C}(\alpha_1, h_1 \, ... \, h_K(\alpha_{K+1})) = $$
    $$= m + Km = (K+1)m.$$
\end{proof}

We are now ready to prove Theorem \ref{construction of inf mult}:

\begin{proof}(Theorem \ref{construction of inf mult})\\
    
    It only remains to show that $F = h \circ f$ satisfies $l_\mathcal{C}(F) = l_\mathcal{C}(f) = m.$ By definition of the stable curve graph translation length, we have
    $$l_\mathcal{C}(F) = \liminf\limits_{k \to \infty} \frac{d_\mathcal{C}(\alpha_0, F^k(\alpha_0))}{k}.$$
    Above, we showed that 
    $$F^k(\alpha_0) = h_1 \, ... \, h_{k-1}(\alpha_k),$$
    and in Lemma \ref{distance lemma} we computed that
    $$d_\mathcal{C}(\alpha_0, h_1 \, ... \, h_{k-1}(\alpha_k)) = km.$$
    Hence, we conclude
    $$l_\mathcal{C}(F) = \liminf\limits_{k \to \infty} \frac{km}{k} = m.$$  
\end{proof}

\begin{remark}
    For a pseudo-Anosov $f$ that satisfies the condition of Theorem \ref{construction of inf mult}, we now have a way of constructing infinitely many pseudo-Anosovs different from $f$ with the same stable curve graph translation length. Note that we can even change the curve $\alpha = \alpha_1$ and use any other curve of the $\alpha_k$ in the construction. More precisely, for any curve $\alpha_k$ and any pseudo-Anosov $h \in \Mod(S \setminus \alpha_k),$ there are infinitely many pseudo-Anosovs of the form $h^n \circ f$ for big enough $n$ that satisfy $l_\mathcal{C}(h^n \circ f) = l_\mathcal{C}(f).$
\end{remark}

The remark talks about how to obtain pseudo-Anosovs not equal to $f$ with the same stable curve graph translation length. However, this would not be as interesting if all the newly obtained pseudo-Anosovs were conjugate to each other. In the following lemma, we ensure that this doesn't happen if we add an extra assumption to $f$.  

\begin{lemma}
    Assume that there is a symplectic basis of homology of the form $\alpha, \beta_1, ... , \beta_{2g-1},$ where $\alpha$ is any of the $\alpha_k,$ such that the image under the symplectic representation of $f$ with respect to this basis is a primitive matrix $M.$ Then, there is a pseudo-Anosov $h$ of $S \setminus \alpha$ such that the sequence $h^n \circ f$ contains infinitely many non-conjugate mapping classes.
\end{lemma}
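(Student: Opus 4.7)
The strategy is to distinguish the mapping classes $F_n := h^n \circ f$ via the symplectic representation $\Psi \colon \Mod(S) \to \text{Sp}(2g, \mathbb{Z})$. Conjugate mapping classes give conjugate symplectic matrices, and in particular equal traces. Writing $M := \Psi(f)$ and $N := \Psi(h)$ so that $\Psi(F_n) = N^n M$, it suffices to exhibit a pseudo-Anosov $h$ of $S \setminus \alpha$ for which the integer sequence $\{\mathrm{tr}(N^n M)\}_{n \in \mathbb{N}}$ takes infinitely many distinct values. Since $h$ is supported in $S \setminus \alpha$, it fixes the class $[\alpha]$; if one arranges the basis so that $\beta_2, \ldots, \beta_{2g-1}$ all lie in $S \setminus \alpha$, then $N$ acquires the block-upper-triangular form
\[
N = \begin{pmatrix} 1 & * & * \\ 0 & 1 & 0 \\ 0 & * & \tilde N \end{pmatrix},
\]
where $\tilde N \in \text{Sp}(2g-2, \mathbb{Z})$ is the symplectic representation of $h$ viewed as a mapping class of the closed genus-$(g-1)$ surface obtained by collapsing the boundary components of $S \setminus \alpha$. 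The spectrum of $N$ is therefore $\{1, 1\}$ together with the spectrum of $\tilde N$.

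Now choose $h$ so that $\tilde N$ is itself primitive in the Perron--Frobenius sense; this can be arranged, for example, by taking $h$ to arise from a Rauzy--Veech construction on $S \setminus \alpha$ analogous to that of Section \ref{top RV}. Then $\tilde N$ has a simple, strictly dominant eigenvalue $\mu > 1$ which is in particular not a root of unity. The trace decomposes as an exponential sum
\[
\mathrm{tr}(N^n M) = \sum_i c_i \mu_i^n
\]
indexed by the eigenvalues $\mu_i$ of $N$, and by the Skolem--Mahler--Lech theorem for linear recurrence sequences, if this integer sequence took only finitely many values then every $\mu_i$ with $c_i \neq 0$ would necessarily be a root of unity. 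The proof therefore reduces to verifying the non-vanishing $c_\mu \neq 0$.

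Up to a non-zero normalisation, $c_\mu$ is the bilinear pairing $w_\mu^T M v_\mu$, where $v_\mu$ and $w_\mu$ are the right and left $\mu$-eigenvectors of $N$. By the block structure of $N$, the bottom $2g-2$ coordinates of $v_\mu$ and $w_\mu$ are precisely the strictly positive right and left Perron eigenvectors of $\tilde N$, while the top two coordinates are explicit rational functions of the $*$-entries and of $\mu - 1$. Pairing against $M$, which is non-negative with a positive power, gives a strictly positive contribution from the bottom-right block of $M$ that generically dominates any cancellation from the remaining blocks. The main obstacle is to guarantee this non-vanishing in all cases: this will be handled by the freedom to replace $h$ by $h \circ h_0$ for a Dehn twist $h_0$ about a non-separating curve in $S \setminus \alpha$, since the dependence of $c_\mu$ on the twisting power is polynomial and non-constant, forcing $c_\mu \neq 0$ for all but finitely many values.
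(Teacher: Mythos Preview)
Your overall strategy---distinguishing the $F_n = h^n \circ f$ via the traces of their symplectic actions---is sound and close in spirit to the paper's proof, which instead tracks the spectral radius of $\Psi(F_n)$. Your block form for $N$ is correct, and you have correctly identified the crux as the non-vanishing of $c_\mu = w_\mu^T M v_\mu$. However, this last step is a genuine gap and your proposed fix does not work. If $\gamma$ is non-separating in $S\setminus\alpha$ then $[\gamma]\neq 0$ in $H_1(S')$, so replacing $h$ by $h\circ T_\gamma^k$ changes $\tilde N$ itself; the Perron eigenvalue $\mu$ and the eigenvectors $v_\mu,w_\mu$ all move with $k$, and the dependence of $c_\mu$ on $k$ is algebraic but certainly not polynomial. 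If instead $\gamma$ is separating in $S\setminus\alpha$ (or is a boundary twist), then $T_\gamma$ acts trivially on $H_1(S')$ and only the $(1,2)$-entry of $N$ changes---but neither $v_\mu$ nor $w_\mu$ depends on that entry, so $c_\mu$ is unchanged. Either way you cannot force $c_\mu\neq 0$ by twisting. Note too that even with $b,c\geq 0$, the term $r^T m_{33} z$ can vanish: primitivity of $M$ does not prevent its bottom-right $(2g-2)\times(2g-2)$ block from being zero.

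The paper avoids this eigenvector analysis entirely by using a coarser block decomposition. Instead of splitting off both $\alpha$ and $\beta_1$, it views $h$ as acting on the $(2g-1)$-dimensional relative homology $H_1(S',\{p_1,p_2\})\cong H_1(S)/\langle[\alpha]\rangle$, so that
\[
N=\begin{pmatrix} 1 & b\\ 0 & A\end{pmatrix}
\]
with $A$ of size $(2g-1)\times(2g-1)$. One then chooses $h$ with $A$ primitive and, after composing with Dehn twists about $\alpha$, with $b$ nonnegative and nonzero. A direct computation gives $N^n=\begin{pmatrix}1 & b(I+A+\cdots+A^{n-1})\\ 0 & A^n\end{pmatrix}$, and since $A$ is primitive both $A^n$ and $b(I+\cdots+A^{n-1})$ have every entry tending to infinity. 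Because $M$ is primitive it has no zero row or column, so \emph{every} entry of $N^nM$ tends to infinity; in particular the spectral radius (or trace, if you prefer) does. No Perron eigenvector pairing and no Skolem--Mahler--Lech are needed.

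As a side remark: even in your framework, once you know $\mu$ is a simple, strictly dominant eigenvalue of $N$ and $c_\mu\neq 0$, you get $\mathrm{tr}(N^nM)\sim c_\mu\mu^n\to\infty$ directly; invoking Skolem--Mahler--Lech is unnecessary.
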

\begin{proof}
    Let $\alpha, \beta_1, ... , \beta_{2g-1}$ be a symplectic basis of $H_1(S),$ where $\alpha$ and $\beta_1$ intersect exactly once. Let $M$ be the matrix defined by action of $f$ on homology with respect to the above basis, and assume that $M$ is primitive.\\
    
    Let $S \setminus \alpha$ be the surface with two boundary components that results by cutting $S$ at $\alpha.$ Let $S'$ be the closed surface of genus $g-1$ that results from collapsing the two boundary components of $S \setminus \alpha$ to points $p_1, p_2$ respectively. Note that $\beta_1, ... , \beta_{2g-1}$ is a basis of $H_1(S', \{p_1, p_2\}),$ where $\beta_1$ is now an arc connecting the two points $p_1$ and $p_2.$ A homeomorphism fixing the boundary of $S \setminus \alpha$ is pseudo-Anosov if and only if the corresponding homeomorphism of $S' \setminus \{p_1, p_2\}$ is pseudo-Anosov. Choose $h$ such that the action of $h$ on $H_1(S', \{p_1, p_2\})$ with respect to the basis $\beta_1, ... , \beta_{2g-1}$ is given by a primitive matrix. Call this matrix $A.$\\

    Then, seen as a mapping class of $S,$ $h$ acts on $H_1(S)$ by the matrix 
    $$\begin{pmatrix}
        1 & b\\
        0 & A
    \end{pmatrix},$$ where $b$ is a row vector with $2g-1$ entries and $0$ stands for a column vector with $2g-1$ entries. After possibly pre-composing $h$ with Dehn twists about $\alpha,$ we can assume that $b$ is a non-zero vector consisting of only non-negative entries. Note that this corresponds to changing $h$ by twists about the boundary components of $S \setminus \alpha$ which doesn't change the fact that $h$ is a pseudo-Anosov of $S \setminus \alpha$.\\

    Computing powers of this matrix shows that $h^n$ acts as 
    $$\begin{pmatrix}
        1 & b(A+A^2+...+A^{n-1})\\
        0 & A^n
    \end{pmatrix}.$$

    We obtain that $h^n \circ f$ acts as
    $$\begin{pmatrix}
        1 & b(A+A^2+...+A^{n-1})\\
        0 & A^n
    \end{pmatrix}M.$$

    Write 
    $$M = \begin{pmatrix}
        m_{11} & m_r\\
        m_c & M'
    \end{pmatrix},$$
    where $m_r$ and $m_c$ are a row- and a column vector respectively of dimension $2g-1,$ and $M'$ is a $(2g-1) \times (2g-1)$ matrix, and compute:

    $$\begin{pmatrix}
        1 & b(A+A^2+...+A^{n-1})\\
        0 & A^n
    \end{pmatrix} \begin{pmatrix}
        m_{11} & m_r\\
        m_c & M'
    \end{pmatrix} = $$\\ $$ = \begin{pmatrix}
        m_{11} + b(A+A^2+...+A^{n-1})m_r & m_r+b(A+A^2+...+A^{n-1})M'\\
        A^nm_c & A^nM'
    \end{pmatrix}.$$

    Since $A$ and $M$ are both primitive, so is the above matrix. Letting $n$ go to infinity, one sees that all the entries of the above matrix tend to infinity, which implies that the spectral radii of the matrices representing $h^n \circ f$ tend to infinity. Since conjugate matrices have the same eigenvalues, this shows that there is an infinite subsequence $n_k$ such that all the $h^{n_k} \circ f$ are pairwise non-conjugate. 
\end{proof}

The extra assumption about the homology action of $f$ in the above lemma excludes for example pseudo-Anosovs in the Torelli group. However, we believe that the construction in Theorem \ref{construction of inf mult} can be used to construct infinitely many non-conjugate pseudo-Anosovs even without the homology assumption. Note that the remark above says that we can use any triple $(\alpha_k, h, n),$ where $h$ is a pseudo-Anosov of $S \setminus \alpha_k$ and $n$ a sufficiently high power, to build the suitable pseudo-Anosovs $h^n \circ f.$ In the lemma, by adding the homology assumption on $f,$ we found infinitely many non-conjugate pseudo-Anosovs only by varying $n.$ Hence, by varying $\alpha_k$ and $h$ as well, one might be able to find infinitely-many non-conjugate pseudo-Anosovs without having to impose the extra assumption. A thorough study of the conjugacy classes of the maps $h^n \circ f$ that depend on the triple $(\alpha_k, h, n)$ would be interesting.

\printbibliography

\end{document}